\theoremstyle{plain}
\newtheorem{theorem}{Theorem}
\newtheorem{corollary}[theorem]{Corollary}
\newtheorem{lemma}[theorem]{Lemma}
\newtheorem{proposition}[theorem]{Proposition}
\theoremstyle{definition}
\newtheorem{definition}[theorem]{Definition}
\theoremstyle{remark}
\newcommand{\mytextsymbol}[1]{\ensuremath{\mathsf{#1}}\xspace}
\newcommand{\OBCe}{\BCe{1}}
\newcommand{\KBCv}{\BCv{k}}
\newcommand{\KBCe}{\BCe{k}}
\newcommand{\BCv}[1]{${#1}$-\mytextsymbol{BC}}
\newcommand{\BCe}[1]{${#1}$-$\mathsf{BC}^e$\xspace}
\newcommand{\BRe}[1]{${#1}$-$\mathsf{BR}^e$\xspace}
\newcommand{\BRv}[1]{${#1}$-\mytextsymbol{BR}}
\newcommand{\BVv}[1]{${#1}$-\mytextsymbol{BV}}
\newcommand{\BVe}[1]{${#1}$-$\mathsf{BV}^e$\xspace}
\newcommand{\OBRv}{\BRv{1}}
\newcommand{\NP}{\mytextsymbol{NP}}
\renewcommand{\P}{\mytextsymbol{P}}
\newcommand{\GI}{\mytextsymbol{GI}}
\newcommand{\weight}{\mytextsymbol{weight}}
\newcommand{\Hit}{\mytextsymbol{HittingSet}}
\newcommand{\hit}{\text{hit}}
\newcommand{\phiv}{\Phi}
\newcommand{\phiw}{\Phi}
\newcommand{\phic}{\Phi_{\mathsf{c}}}
\newcommand{\cA}{\mathcal{A}}
\newcommand{\cB}{\mathcal{B}}
\newcommand{\cF}{\mathcal{F}}
\newcommand{\cS}{\mathcal{S}}
\newcommand{\Z}{\mathbb{Z}}
\newcommand{\SAP}{\mytextsymbol{SAP}}
\newcommand{\tr}{\text{tr}}
\newcommand{\N}{\mathbb{N}}
\title{Computational Complexity of $k$-Block Conjugacy}
\author{Tyler Schrock \qquad Rafael Frongillo\\[2pt]University of Colorado, Boulder}
\begin{document}

\maketitle

\begin{abstract}
  We consider several computational problems related to conjugacy between subshifts of finite type, restricted to $k$-block codes: verifying a proposed $k$-block conjugacy, deciding if two shifts admit a $k$-block conjugacy, and reducing the representation size of a shift via a $k$-block conjugacy.
  We give a polynomial-time algorithm for verification, and show \GI- and \NP-hardness for deciding conjugacy and reducing representation size, respectively.
  Our approach focuses on $1$-block conjugacies between vertex shifts, from which we generalize to $k$-block conjugacies and to edge shifts.
  We conclude with several open problems.
\end{abstract}

\section{Introduction}

One-dimensional subshifts of finite type (SFTs) are of fundamental importance in the study of symbolic dynamical systems.
Despite their central role in symbolic dynamics, however, several basic questions about SFTs remain open, particularly with regard to computation.
Most prominent is the conjugacy problem: whether it is possible to decide if two given SFTs are conjugate.
In this work, we study restricted versions of the conjugacy problem, with an eye toward applications (algorithms to simplify representations of SFTs) as well as developing insights toward the full conjugacy problem.
In particular, we address the computational complexity of deciding or verifying conjugacy when given a bound on the block size of the corresponding sliding block code.
We focus on the case of vertex shifts; see below for other representations, notably edge shifts.

First consider the question of verification: given two vertex shifts and a proposed sliding block code, what is the computational complexity of verifying that the code induces a conjugacy?
We give a polynomial-time algorithm, for both the irreducible and reducible cases; a polynomial-time algorithm in latter case was not known (\S~\ref{sec:verification}).
Second, the question of deciding $k$-block conjugacy: given two vertex shifts, what is the complexity of deciding if there exists a sliding block code, with block length at most $k$, that induces a conjugacy?
By the first result on efficient verification, this problem is in \NP; we show it to be  \GI-hard (at least as hard as the Graph Isomorphism problem) for all $k$ (\S~\ref{sec:conjugacy}).
Third, the question of reduction: given a vertex shift and integer $\ell$, what is the complexity of deciding whether there exists a $k$-block conjugacy which reduces the number of vertices by $\ell$?
Extending a construction from previous work~\cite{SAP}, we show that this problem, for $k=1$, is \NP-complete (\S~\ref{sec:reducing}).

It is interesting to contrast our results with those of previous work~\cite{SAP}, on the special case of $k=1$ with the restriction that the block code be a sequence of amalgamations.
(Recall that any conjugacy can be expressed as a sequences of splittings followed by amalgamations; see \S~\ref{sec:setting}.)
This previous work shows that the analogous version of our third problem, of reducing the number of vertices using only amalgamations, is \NP-complete, but it does not address the verification problem; intuitively it seems plausible that verification would also be \NP-hard.
Returning to our setting, note that general 1-block codes need not be sequences of amalgamations (Figure~\ref{fig:a_dot_map}).
Thus, while it is unsurprising that the reduction problem remains \NP-hard in our setting, it is perhaps surprising given that verification can be done in polynomial time, as a priori the number of splittings required could be super-polynomial.

Edge shifts have received more attention in the literature, perhaps because of their succinct representations as integer matrices.
Precisely because of their succinct representations, the question of verification is somewhat nuanced: verifying a given sliding block code requires writing down the proposed code, which can be exponential in the description size of the original edge shifts, so while the runtime of our algorithm can be exponential in the description sizes of the shifts, it is still polynomial-time (\S~\ref{sec:edge-shifts}).
We also show \GI-hardness for the corresponding conjugacy problems, and leave several open questions (\S~\ref{sec:discussion}).

\section{Setting}
\label{sec:setting}

We begin with basic graph-theoretic definitions and convention.
A directed graph $G = (V,E)$ is a set of vertices $V$ along with a set of edges $E\subseteq V\times V$.
When multiple graphs are in play, we will write $G=(V_G,E_G)$ to clarify which graphs the vertices or edges correspond to.
For a directed graph $G=(V,E)$ and a vertex $v\in V$, we define $N^+(v)=\{u\in V: (v,u)\in E\}$ and $N^-(v)=\{u\in V: (u,v)\in E\}$ to be the set of out-neighbors and in-neighbors of $v$, respectively.
Unless specified otherwise, \emph{a cycle of length $n$} will mean a sequence $v_1v_2\cdots v_n \in V$ such that $(v_i,v_{i+1})\in E$ for $i\in\{1,\ldots,n+1\}$ where $v_{n+1} := v_1$.
That is, $v_1v_2\cdots v_n$ is a cycle in our terminology if the path $v_1v_2\cdots v_nv_1$ forms a cycle in $G$.
We define $C_n(G)$ to be the set of cycles of length $n$ in $G=(V,E)$.
For example, if the path $v_1v_2v_3v_1$ forms a cycle in $G$, then $v_1v_2v_3, v_2v_3v_1, v_3v_1v_2 \in C_3(G)$.

Let $\cA$ be a finite set.
The \emph{full shift} $\cA^\Z$ over alphabet $\cA$ is the set $\{(x_i)_{i\in \Z}:x_i\in\cA\text{ for all }i\in\Z\}$.
An element of $\cA^\Z$ is called a \emph{point}.
A \emph{block} (or \emph{word}) in $\cA$ is a string $a_1a_2\cdots a_n$ of symbols from $\cA$.
We will use the term \emph{infinite word} to describe strings in $\cA$ which are infinite in exactly one direction.
If $x=(x_i)_{i\in \Z}\in \cA^\Z$, we use $x_{[a,b]}$ to denote the block $x_ax_{a+1}\cdots x_b$.
Similarly, we use $x_{[a,\infty)}$ to denote the infinite word $x_ax_{a+1}\cdots$.
Let $\cF$ be a set of blocks over $\cA$ called \emph{forbidden blocks}.
Then $X_\cF$ is defined to be the subset of $\cA^\Z$ where each $x\in X_\cF$ contains none of the forbidden block in $\cF$.
A \emph{shift space} (or \emph{shift}) is a subset $X\subseteq \cA^\Z$ such that $X=X_\cF$ for some set of forbidden blocks $\cF$.
If there exists a finite set $\cF$ such that $X=X_\cF$, then $X$ is called a \emph{shift of finite type}.

Given a directed graph $G=(V,E)$ with labeled vertices (each distinct), we associate to it the shift space $X_G=\{(v_i)_{i\in\Z}:v_i\in V,(v_i,v_{i+1})\in E\text{ for all }i\in\Z\}$, which is the collection of all bi-infinite walks on $G$.
Note that $X_G$ is a shift of finite type with $\cF=\{v_iv_j:(v_i,v_j)\notin V\}$.
Any shift space of this form is called a \emph{vertex shift}.
Similarly, given a directed multigraph $G=(V,E)$, i.e.\ where $E$ is a multiset, and a labeling of the edges from $\cA$, we define the \emph{edge shift} $X_G^e$ of labelings of bi-infinite walks on $G$.
Again edge shifts are shifts of finite type with $\cF=\{e_1e_2:e_1\text{ does not terminate at the initial vertex of }e_2\}$.

A shift $X$ is \emph{irreducible} if for every pair of words $w_1,w_2$ in $X$, there is a word $w_3$ such that $w_1w_3w_2$ is a word in $X$, and $X$ is \emph{reducible} if it is not irreducible.
In graph-theoretic terms, first consider any graph containing a vertex with either no out-neighbors or no in-neighbors.
Such a vertex is called \emph{stranded}.
A graph (or multigraph) with no stranded vertices is called \emph{essential}.
A graph (or multigraph) with the property that for every pair of vertices $u,v$ there is a path from $u$ to $v$ is called \emph{strongly connected}.
Finally, a vertex shift $X_G$ (or edge shift $X^e_G$) is irreducible if $G$ is essential and strongly connected.

Given a shift $X$ with alphabet $\cA_1$, we can transform $X$ into a shift space over another alphabet $\cA_2$ in the following way.
Fix integers $m,a$ with $-m\leq a$. Then letting $\cB_n(X)$ denote the set of blocks of size $n$ from the shift $X$ and given a function $\Phi:\cB_{m+a+1}(X)\to \cA_2$, the corresponding \emph{sliding block code} with memory $m$ and anticipation $a$ is the function $\Phi_\infty$ defined by $\Phi_\infty((x_i)_{i\in\Z})=(\Phi(x_{[i-m,i+a]}))_{i\in\Z}$.
That is, $\Phi_\infty$ looks at a block of size $m+a+1$ through a window to determine a character from $\cA_2$.
Then the window is slid infinitely in both directions.
Letting $k=m+a+1$, we will call any sliding block code with window size $k$ a \emph{$k$-block code}.
Given a sliding block code as $\Phi:\cA_1^k\to \cA_2$, we extend $\Phi$ to all finite and infinite words $w$ of length at least $k$ by $\Phi((w_i)_{i\in I})=(\Phi(w_{[i-m,i+a]}))_{i-m,i+a\in I}$, where $I \subsetneq \Z$.
That is, we extend $\Phi$ to words by sliding $\Phi$ over the entire word.

Let $X$ be any shift space with alphabet $\cA_1$.
We define the $k$th higher block shift $X^{[k]}$ with alphabet $\cA_2=\cB_k(X)$ by the image of $X$ under $\beta_N:X\to (\cB_k(X))^\Z$ where for any point $p\in X$, $\beta_N(p)_i=p_{[i,i+k-1]}$.
If $X=X_G$ happens to be a vertex shift, we can construct the $k$th higher block shift in terms of the graph.
For any directed graph $G$, construct the graph $G^{[k]}$ by $V_{G^{[k]}}=\{v_1\cdots v_k:v_1\cdots v_k\text{ is a path in $G$}\}$ and $E_{G^{[k]}}=\{(v_1v_2\cdots v_k, v_2\cdots v_kv_{k+1}):v_1\cdots v_{k+1}\text{ is a path in $G$} \}$.
Then $X_{G^{[k]}}=X_G^{[k]}$.
When dealing with $k$-block codes, it is often useful to \emph{pass to a higher block shift} by noting that there is a $k$-block conjugacy $\Phi_\infty:X\to Y$ if and only if there is a 1-block conjugacy $\Phi^{[k]}_\infty:X^{[k]}\to Y$~\cite[Proposition 1.5.12]{lind1999introduction}.

Furthermore, any sliding block code $\Phi_\infty:X_G\to X_H$ between vertex shifts induces the function $\phic:\bigcup_{n=1}^\infty C_n(G)\to \bigcup_{n=1}^\infty C_n(H)$ defined as follows.
Given a cycle $c$ in $G$, there is a unique cycle $d$ in $H$ with $|c|=|d|$ such that $\Phi_\infty(c^\infty)=d^\infty$; we set $\phic(c)=d$.
In the special case of a 1-block code, the block map $\Phi:\cA_1\to\cA_2$ is simply a map between the alphabets.
In this case, when $X_G$ is a vertex shift, we have $\phic(v_1\cdots v_n)=\Phi(v_1)\cdots\Phi(v_n)$.

\begin{definition}
  Let $X_G$ be a vertex shift.
  We say states $u,v\in V_G$ can be \emph{amalgamated} if one the following conditions is met.
  \begin{enumerate}
  \item $N^+(u)=N^+(v)$ and $N^-(u)\cap N^-(v)=\emptyset$
  \item $N^-(u)=N^-(v)$ and $N^+(u)\cap N^+(v)=\emptyset$
  \end{enumerate}
  If $u$ and $v$ are amalgamated, they are replaced by the vertex $uv$ which has $N^+(uv)=N^+(u)\cup N^+(v)$ and $N^-(uv)=N^-(u)\cup N^-(v)$.
\end{definition}

\begin{definition}
  Let $X_G$ be a vertex shift.
  A vertex $v\in V_G$ can be split into two vertices $v_1$ and $v_2$ provided the edges of $v_1,v_2$ satisfy one of the following conditions.
  \begin{enumerate}
  \item $\{N^+(v_1),N^+(v_2)\}$ is a partition of $N^+(v)$ and $N^-(v_1)=N^-(v_2)=N^-(v)$.
  \item $\{N^-(v_1),N^-(v_2)\}$ is a partition of $N^-(v)$ and $N^+(v_1)=N^+(v_2)=N^+(v)$.
  \end{enumerate}
  The corresponding new graph is called a \emph{state splitting} of $v$.
  Note that state splittings and amalgamations are inverse operations.
\end{definition}

The definitions for edge shifts are similar.
Since edges shifts are based on multigraphs, $N^-(v)$ and $N^+(v)$ are multisets.
The definition of a state splitting is identical noting that the partition is a multiset partition.
For amalgamations, two vertices $u,v$ can be amalgamated if $N^-(u)=N^-(v)$ or $N^+(u)=N^+(v)$.
In the case where $N^-(u)=N^-(v)$, $u,v$ are replaced by a single vertex $uv$ with $N^-(uv)=N^-(u)=N^-(v)$ and $N^+(uv)=N^+(u)\uplus N^+(v)$, where $N^+(u)\uplus N^+(v)$ is the multiset disjoint union.

\begin{theorem}[\cite{williams1973classification,lind1999introduction}]\label{thm:splitting->amalgamations}
  Let $X_G,X_H$ be vertex shifts (or edge shifts).
  Then $X_G$ and $X_H$ are conjugate if and only if there is a sequence of state splittings followed by a sequence of amalgamations which transform $G$ into $H$.
\end{theorem}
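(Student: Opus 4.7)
The plan is to prove the two implications separately. For the \emph{if} direction, I would show that each state splitting (and thus each amalgamation, as its inverse) induces a conjugacy, then compose along the sequence. Given a splitting of $v$ into $v_1, v_2$ under condition (1), the 1-block map $V_{G'} \to V_G$ sending $v_1, v_2 \mapsto v$ and fixing all other vertices induces a 1-block factor $X_{G'} \to X_G$, with inverse the 2-block code that relabels an occurrence of $v$ followed by $u$ as the unique $v_i$ with $u \in N^+(v_i)$; the partition condition on $N^+$ guarantees this inverse is well-defined, and the disjointness of in-neighborhoods is not actually needed here but is needed for symmetry with condition~(2). Condition (2) is handled symmetrically with a 2-block code that peeks at the previous symbol. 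For edge shifts, the same argument applies using multiset partitions in place of set partitions.

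For the \emph{only if} direction, suppose $\Phi_\infty : X_G \to X_H$ is realized by a $k$-block code $\Phi$. I would proceed in two stages: first pass to a higher block presentation to reduce to a 1-block conjugacy, then decompose the 1-block case. For the first stage, I claim $G^{[k]}$ is obtained from $G$ by a sequence of state splittings. Indeed, the map $G^{[j+1]} \to G^{[j]}$ that collapses each fiber $\{v_1 \cdots v_j v : v \in N^+(v_j)\}$ to $v_1 \cdots v_j$ is a valid amalgamation: all vertices in such a fiber share a common in-neighborhood and have pairwise disjoint out-neighborhoods, so they amalgamate pairwise in sequence. Reversing these amalgamations produces splittings $G \to G^{[k]}$. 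By Proposition~1.5.12 of \cite{lind1999introduction}, $\Phi$ now factors as a 1-block conjugacy $\Psi : X_{G^{[k]}} \to X_H$.

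For the second stage, I would show that any 1-block conjugacy $\Psi : X_{G'} \to X_H$ can be realized by additional splittings on $G'$ followed by amalgamations onto $H$. Since $\Psi$ is 1-block, it is determined by a vertex map $\psi : V_{G'} \to V_H$, and the intended final amalgamation collapses each fiber $\psi^{-1}(w)$ to $w$. The fibers may fail to satisfy the amalgamation conditions immediately, so I would apply further splittings within each fiber, refining in- or out-neighborhoods according to how trajectories of $\Psi_\infty^{-1}$ distinguish vertices in the same fiber, until every fiber consists of vertices with a common out-neighborhood and pairwise disjoint in-neighborhoods (or vice versa). Amalgamating the resulting fibers pairwise then produces $H$.

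The main obstacle is the second stage. One must verify both that the constructed splittings terminate in an amalgamable configuration and that the resulting amalgamated graph is exactly $H$ rather than merely a quotient of it. This rigidity relies essentially on $\Psi_\infty$ being a bijection on points, not merely a factor map, and constitutes the heart of Williams's original argument \cite{williams1973classification}; I would follow the detailed treatment in \cite{lind1999introduction}. The edge-shift case follows by the same two-stage strategy, using multiset partitions throughout.
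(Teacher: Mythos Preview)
The paper does not supply its own proof of this theorem; it is stated with citations to Williams~\cite{williams1973classification} and Lind--Marcus~\cite{lind1999introduction} and used as background. There is therefore no in-paper argument to compare your proposal against.

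On the proposal itself: the \emph{if} direction is correctly sketched, and your first stage of the \emph{only if} direction (realizing $G\to G^{[k]}$ as a sequence of splittings, then reducing to a 1-block conjugacy) is standard and fine. The second stage, however, is a plan rather than a proof. Saying you will ``apply further splittings within each fiber \ldots\ until every fiber consists of vertices with a common out-neighborhood and pairwise disjoint in-neighborhoods'' leaves the two essential points unaddressed: why such a sequence of splittings exists and terminates, and why the resulting quotient is $H$ on the nose rather than some other graph with the same vertex-fiber structure. You correctly flag this as the crux and defer to the references, which is acceptable for a cited classical result but is not an independent argument.

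One substantive remark: the textbook proof you cite (Lind--Marcus, Theorem~7.2.7) does not actually proceed by directly decomposing a 1-block conjugacy as you outline. It goes through strong shift equivalence of the transition matrices, showing that conjugacy is equivalent to a chain of elementary equivalences $A=RS$, $B=SR$, each of which is then realized as a splitting followed by an amalgamation. Your sketch---iterate splittings on the domain until fibers become amalgamable---is closer in spirit to building a common splitting refinement of $G$ and $H$, but to make that precise one typically constructs the intermediate graph explicitly from \emph{both} the forward block map and its inverse simultaneously, rather than hoping an unspecified refinement process converges. If you want to avoid the matrix route, that explicit construction is what is missing.
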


In the case of a 1-block code $\Phi:V_G\to V_H$, we may view the block map as a partition of the vertices of $G$, where each element of the partition is converted to a vertex of $H$.
In light of Theorem~\ref{thm:splitting->amalgamations}, it may be tempting to think that every 1-block code can be written as a sequence of amalgamations only, as intuitively splitting a vertex while requiring the vertices be re-amalgamated has no benefit.
Yet this statement is not true; there are simple examples of two graphs admitting a 1-block conjugacy, where no pair of vertices can be amalgamated in either graph (Figure~\ref{fig:a_dot_map}).

\begin{figure}
  \centering
  \begin{tabular}{cc}
    \raisebox{40pt}{(a)} & 
    \begin{tikzpicture}[scale=.8]
    \node (0) at (0,0) {$a$};
    \node (1) at (0,-2) {$b$};
    \node (2) at (-1,-1) {$c$};
    \node (3) at (-2,-2) {$d$};
    \node (4) at (-2,0) {$e$};
    \draw [->] (0) edge[bend right] (1) (1) edge[bend right] (0) (0) edge (2) (2) edge (3) (3) edge (4) (4) edge (0) (2) edge (1);
    \draw [->] (4) edge[loop above] (4);
    
    \node (la) at (1,-1){};
    \node (ra) at (4,-1){};
    \draw [->] (la) edge node[above] {$\begin{array}{c}a\mapsto a \\ b,c,d,e\mapsto b\end{array}$} (ra);
    
    \node (0b) at (5,0) {$a$};
    \node (1b) at (5,-2) {$b$};
    \draw [->] (0b) edge[bend right] (1b) (1b) edge[bend right] (0b) (1b) edge[loop below] (1b);
    \end{tikzpicture}
          \\[10pt]
	\raisebox{0pt}{(b)} &
  
  \begin{tabular}{c}
  \begin{tikzpicture}[scale=.8]
  \node (0) at (0,0) {$a$};
  \node (1) at (0,-2) {$b$};
  \node (2) at (-1,-1) {$c$};
  \node (3) at (-2,-2) {$d$};
  \node (4) at (-2,0) {$e$};
  \draw [->] (0) edge[bend right] (1) (1) edge[bend right] (0) (0) edge (2) (2) edge (3) (3) edge (4) (4) edge (0) (2) edge (1);
  \draw [->] (4) edge[loop above] (4);
  
  \node (la) at (1,-1){};
  \node (ra) at (2,-1){};
  \draw [->] (la) edge (ra);
  
  \node (0c) at (5,0) {$a$};
  \node (1ac) at (5,-2) {$b_1$};
  \node (1bc) at (6,-1) {$b_2$};
  \node (2c) at (4,-1) {$c$};
  \node (3c) at (3,-2) {$d$};
  \node (4c) at (3,0) {$e$};
  \draw [->] (0c) edge[bend right] (1bc) (1bc) edge[bend right] (0c) (1ac) edge (0c) (0c) edge (2c) (2c) edge (3c) (3c) edge (4c) (4c) edge (0c) (2c) edge (1ac);
  \draw [->] (4c) edge[loop above] (4c);
  
  \node (la2) at (7,-1){};
  \node (ra2) at (8,-1){};
  \draw [->] (la2) edge (ra2);
  
  \node (0d) at (11,0) {$a$};
  \node (2d) at (10,-1) {$c$};
  \node (1a3d) at (9,-2) {$b_1 d$};
  \node (4d) at (9,0) {$e$};
  \node (1bd) at (12,-1) {$b_2$};
  \draw[->] (0d) edge (2d) (2d) edge (1a3d) (1a3d) edge (4d) (4d) edge (0d) (4d) edge[loop above] (4d) (1a3d) edge[bend right] (0d) (0d) edge[bend right] (1bd) (1bd) edge[bend right] (0d);
\end{tikzpicture} \\
\begin{tikzpicture}[scale=.8]
\node (la3) at (13,-1) {};
\node (ra3) at (14,-1) {};
\draw[->] (la3) edge (ra3);

\node (0e) at (17,0) {$a$};
\node (1be) at (18,-2) {$b_2$};
\node (2e) at (16,-2) {$c$};
\node (1a34e) at (15,0) {$b_1de$};
\draw[->] (0e) edge (2e) (2e) edge (1a34e) (1a34e) edge (0e) (1a34e) edge[loop above] (1a34e) (0e) edge[bend right] (1be) (1be) edge[bend right] (0e);

\node (la4) at (19,-1) {};
\node (ra4) at (20,-1) {};
\draw[->] (la4) edge (ra4);

\node (0f) at (23,0) {$a$};
\node (1b2f) at (23,-2) {$b_2c$};
\node (1a34f) at (21,-1) {$b_1de$};
\draw[->] (0f) edge[bend right] (1b2f) (1b2f) edge[bend right] (0f) (1b2f) edge (1a34f) (1a34f) edge (0f) (1a34f) edge[loop above] (1a34f);

\node (la5) at (24,-1) {};
\node (ra5) at (25,-1) {};
\draw[->] (la5) edge (ra5);

\node (0b) at (26,0) {$a$};
\node (1b) at (26,-2) {$bcde$};
\draw [->] (0b) edge[bend right] (1b) (1b) edge[bend right] (0b) (1b) edge[loop below] (1b);
\end{tikzpicture}
\end{tabular}
  \end{tabular}
	\caption{(a) A minimal example of two vertex shifts which are conjugate by a 1-block code but by not a sequence of amalgamations. (b) The conjugacy, demonstrated via a splitting followed by four amalgamations.}
	\label{fig:a_dot_map}
\end{figure}
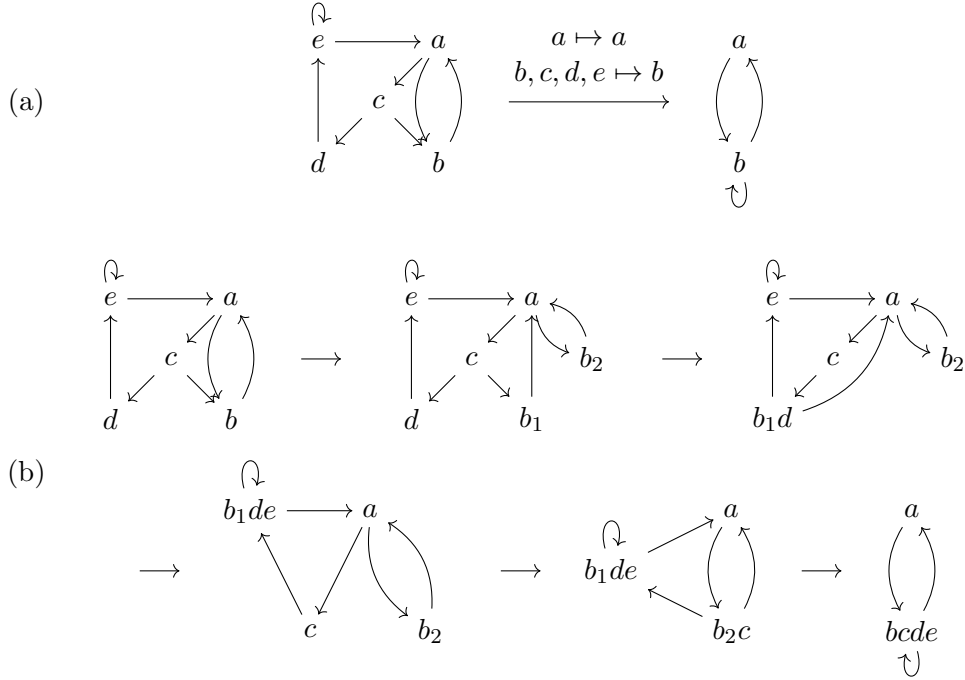

We conclude the background with a common way a sliding block code can fail to be injective.
Given a $k$-block code $\Phi_\infty:X_G\to X_H$, if there exist distinct words $w_2,w_2'$ such that $\Phi(w_1w_2w_3)=\Phi(w_1w_2'w_3)$ with $|w_1|=|w_3|=k$, we say $\Phi_\infty$ \emph{collapses a diamond}.
As we now state, if a sliding block code is injective, it cannot collapse a diamond.
(As we discuss in \S~\ref{sec:verify-reducible}, if $\phic$ is injective, collapsing a diamond is actually the only way $\Phi_\infty$ can fail to be injective.)
We prove the result for completeness; see, e.g.,~\cite[Theorem 8.1.16]{lind1999introduction} for a similar result in the irreducible case.

\begin{lemma}\label{lemma:diamond_lemma}
  Let $\Phi_\infty:X_G\to X_H$ be a $k$-block code.
  If $\phiw$ collapses a diamond, then $\Phi_\infty$ is not injective.
\end{lemma}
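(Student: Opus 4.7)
The plan is to construct two distinct points $x, x' \in X_G$ with $\Phi_\infty(x) = \Phi_\infty(x')$. Let $w_1, w_2, w_2', w_3$ witness the diamond collapse, so $|w_1| = |w_3| = k$, $w_2 \neq w_2'$, and $\Phi(w_1 w_2 w_3) = \Phi(w_1 w_2' w_3)$. The equality of the two $\Phi$-images forces $|w_2| = |w_2'|$, since $|\Phi(w)| = |w| - k + 1$. Because both $w_1 w_2 w_3$ and $w_1 w_2' w_3$ are words of $X_G$, I would pick any $x \in X_G$ containing $w_1 w_2 w_3$, shift coordinates so that $x_{[1,\,2k+|w_2|]} = w_1 w_2 w_3$, and then define $x'$ by overwriting the middle $|w_2|$ coordinates with $w_2'$ while leaving the rest of $x$ unchanged.

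The first check is that $x' \in X_G$. The coordinates where $x'$ differs from $x$ all lie in $[|w_1|+1,\, |w_1|+|w_2|]$, so the only transitions whose validity is in question are those internal to $w_2'$ and the two boundary transitions $w_1 \to w_2'$ and $w_2' \to w_3$. All of these are legal in $G$ because $w_1 w_2' w_3$ is itself a word of $X_G$. Since $w_2 \neq w_2'$ we also have $x \neq x'$.

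The second check is that $\Phi_\infty(x) = \Phi_\infty(x')$. With memory $m$ and anticipation $a$, consider the length-$k$ window $[i-m,\, i+a]$ at each coordinate $i$. If this window avoids $[|w_1|+1,\, |w_1|+|w_2|]$, then $x$ and $x'$ agree on it and the two images trivially coincide. Otherwise the window includes at least one coordinate inside the $w_2$-range, and a short arithmetic check using $|w_1|=|w_3|=k$ shows the window then sits entirely inside $[1,\, 2k+|w_2|]$; it is therefore a length-$k$ subword of $w_1 w_2 w_3$ in $x$ and the positionally corresponding subword of $w_1 w_2' w_3$ in $x'$. The hypothesis $\Phi(w_1 w_2 w_3) = \Phi(w_1 w_2' w_3)$ forces these to map to the same symbol. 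The only slightly subtle step is this containment argument; the buffers $|w_1|=|w_3|=k$ in the definition of diamond collapse exist precisely so that every window touching the altered middle block still lies inside $w_1 w_2 w_3$. Everything else is bookkeeping about which coordinates $x$ and $x'$ differ on.
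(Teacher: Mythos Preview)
Your proof is correct and follows essentially the same approach as the paper's: extend the diamond words $w_1w_2w_3$ and $w_1w_2'w_3$ to bi-infinite points that agree outside the middle block, then observe that the $k$-buffers $w_1,w_3$ ensure every window sees either identical data or data governed by the hypothesis $\Phi(w_1w_2w_3)=\Phi(w_1w_2'w_3)$. The paper states this in two lines by picking infinite words $w_0,w_4$ flanking $w_1,w_3$; you unpack the same idea with explicit coordinate bookkeeping.
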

\begin{proof}
  Suppose $\Phi$ collapses a diamond.
  That is, $\Phi(w_1w_2w_3)=\Phi(w_1w_2'w_3)$ for some words $w_1,w_3$ of length $k$ and distinct words $w_2,w_2'$ in $G$.
  Consider any infinite word $w_0$ which can precede $w_1$ and any infinite word $w_4$ which can follow $w_3$.
  Then $\Phi_\infty(w_0w_1w_2w_3w_4)=\Phi_\infty(w_0w_1w_2'w_3w_4)$, so $\Phi_\infty$ is not injective.
\end{proof}

\section{Verification: Testing a $k$-Block Map for Conjugacy}
\label{sec:verification}

Given a pair of directed graphs $G,H$, and a proposed $k$-block map $\Phi$, we wish to verify whether or not $\Phi$ induces a conjugacy between the vertex shifts $X_G,X_H$.
We will focus in this section on the case $k=1$, as the case $k>1$ follows immediately by recoding to the $k$th higher block shift.
When $G$ and $H$ are irreducible (strongly connected), this problem boils down to checking that the two graphs have the same number of cycles of each length up to some constant, and furthermore that $\Phi$ induces an injection on these cycles.
While cycle counting can be done efficiently using powers of the adjacency matrices, the challenge remains of checking injectivity efficiently.

The reducible case, when $G$ and $H$ are not strongly connected, is much more complex.
We give counter-examples to several statements which would have led to a straightforward algorithm wherein one subdivides the graphs into their irreducible components and uses the algorithm for the irreducible case on each, together with some other global checks.
Instead, we give a more direct reduction to the irreducible case: we efficiently augment the graphs and block map with new vertices and edges, until the resulting graphs are irreducible, in such a way as to preserve conjugacy (or lack thereof).

\subsection{Irreducible Case}\label{sec:verify-irreducible}

As described above, we will focus first on 1-block codes.
When $G,H$ are irreducible,
the following straightforward topological result allows us to
restrict attention to the map induced on cycles between the graphs.

\begin{proposition}\label{prop:dense_subset}
  Suppose $X,Y$ are compact metric spaces, $\psi: X\to Y$ is continuous, and $D\subseteq Y$ is a dense subset of $Y$.
  If $\psi$ surjects onto $D$, then $\psi$ surjects onto all of $Y$.
\end{proposition}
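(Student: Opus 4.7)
The plan is to exploit that $\psi(X)$ is closed in $Y$, so it must contain the closure of any subset, in particular $\overline{D} = Y$. This reduces the problem to two standard point-set topology facts.

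First I would observe that $\psi(X)$ is the continuous image of a compact space, so $\psi(X)$ is compact. Since $Y$ is a metric space, it is Hausdorff, and compact subsets of Hausdorff spaces are closed; thus $\psi(X)$ is a closed subset of $Y$.

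Next, by the surjection hypothesis we have $D \subseteq \psi(X)$. Taking closures in $Y$ and using that $\psi(X)$ is closed,
\[
Y \;=\; \overline{D} \;\subseteq\; \overline{\psi(X)} \;=\; \psi(X),
\]
which yields $\psi(X) = Y$, as desired.

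There is no real obstacle here: both ingredients (continuous images of compact sets are compact, and compact subsets of Hausdorff spaces are closed) are standard, and the application needs only that $D$ is dense in order to conclude via closure. The only subtlety worth flagging is that the argument does not require $D$ to be explicitly constructed or countable; density alone is enough. In the intended application later in Section~\ref{sec:verify-irreducible}, $X$ and $Y$ will be subshifts (hence compact metric spaces), $\psi$ will be a sliding block code (hence continuous), and $D$ will be the set of periodic points, which is dense in an irreducible SFT — so this proposition will immediately reduce checking surjectivity to checking that $\phic$ hits every cycle.
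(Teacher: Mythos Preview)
Your argument is correct, and it takes a genuinely different route from the paper's own proof. The paper argues pointwise: given $p\in Y$, it picks a sequence $p_n\in D$ converging to $p$, chooses preimages $\gamma(p_n)\in X$, extracts a convergent subsequence using compactness of $X$, and then applies continuity of $\psi$ to show the limit maps to $p$. Your proof instead works globally: you observe once that $\psi(X)$ is compact, hence closed in the Hausdorff space $Y$, and then conclude $Y=\overline{D}\subseteq\psi(X)$ in one line. Your version is shorter and in fact slightly more general (it needs only that $Y$ is Hausdorff, not metric, and never uses sequences), while the paper's version has the minor expository advantage of exhibiting an explicit preimage of each point. Either way, the downstream use in Theorem~\ref{thm:conjugacy_iff_cycle_bijection} is identical.
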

\begin{proof}
  Suppose $X,Y$ are compact metric spaces, $\psi: X\to Y$ is continuous, $D\subseteq Y$ is a dense subset of $Y$, and $D\subseteq\psi(X)\subseteq Y$.
  Let $p\in Y$.
  Since $D$ is dense, there is a sequence $\{p_n\}$ in $D$ which converges to $p$.
  Since $\psi$ surjects onto $D$, every $p_n$ has a preimage in $X$.
  Pick $\gamma :D\to X$ such that $\psi\circ\gamma=\text{id}_D$.
  Then $\{\gamma(p_n)\}$ is a sequence in $X$.
  Since $X$ is compact, there is a subsequence $\{\gamma(p_{n_k})\}$ which converges to some $q\in X$.
  As limits commute with continuous functions, we have
  \[\psi(q) = \psi(\lim\limits_{n_k\to \infty} \gamma(p_{n_k}))=\lim\limits_{n_k\to \infty}\psi(\gamma(p_{n_k}))=\lim\limits_{n_k\to\infty}p_{n_k}=p.\]
  Thus $\psi(q)=p$, so $\psi$ is surjective on all of $Y$.
\end{proof}

We will applying Proposition~\ref{prop:dense_subset} with $D$ being the set of periodic points of $X_H$, which are in bijection with cycles of $H$.
The following result, that $\Phi$ induces a 1-block conjugacy if and only if it induces a bijection on cycles, appears to be known; we give the proof for completeness.

\begin{theorem}\label{thm:conjugacy_iff_cycle_bijection}
  Irreducible vertex shifts $X_G,X_H$ are conjugate via a 1-block code if and only if there is a vertex map $\phiv:V_G\to V_H$ such that the induced map $\phic$ is a bijection.
\end{theorem}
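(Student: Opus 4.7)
The plan is to prove both directions by moving back and forth between the map on cycles $\phic$ and the sliding block code $\Phi_\infty$, exploiting the correspondence between cycles in $G$ (or $H$) and periodic points of the shift, together with the fact that irreducibility makes these periodic points dense. For the forward direction, assuming $\Phi_\infty : X_G \to X_H$ is a 1-block conjugacy with vertex map $\Phi$, injectivity of $\phic$ is immediate: if $\phic(c) = \phic(c')$ for $c, c' \in C_n(G)$, then $\Phi_\infty(c^\infty) = \Phi_\infty((c')^\infty)$, so $c^\infty = (c')^\infty$ by injectivity of $\Phi_\infty$, and hence $c = c'$. For surjectivity, given $d \in C_n(H)$, let $x$ be the unique preimage of $d^\infty$; since sliding block codes commute with the shift $\sigma$, we have $\Phi_\infty(\sigma^n x) = \sigma^n d^\infty = d^\infty = \Phi_\infty(x)$, and injectivity forces $\sigma^n x = x$. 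Thus $x = c^\infty$ for $c := x_{[0, n-1]} \in C_n(G)$, and $\phic(c) = d$.

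For the backward direction, assume $\Phi : V_G \to V_H$ induces the bijection $\phic$ and hence defines a 1-block code $\Phi_\infty : X_G \to X_H$, which is automatically continuous and shift-commuting. Surjectivity will follow from Proposition~\ref{prop:dense_subset}: because $X_H$ is irreducible, its periodic points form a dense subset, and since $\phic$ surjects onto cycles of $H$, $\Phi_\infty$ surjects onto these periodic points; the proposition then upgrades this to surjectivity onto all of $X_H$.

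The main obstacle is injectivity, since density alone does not transfer injectivity from periodic points to the full shift. My plan is a contradiction argument: suppose $\Phi_\infty(x) = \Phi_\infty(y)$ with $x \neq y$, and produce two distinct cycles in $G$ with the same $\phic$-image. I split into two cases. If $x$ and $y$ differ at infinitely many indices in at least one direction, the pigeonhole principle applied to the finitely many pairs $(x_i, y_i) \in V_G \times V_G$ yields indices $i < j$ with $(x_i, y_i) = (x_j, y_j) = (u, v)$ for some $u \neq v$; then $x_{[i, j-1]}$ and $y_{[i, j-1]}$ are distinct cycles in $C_{j-i}(G)$ both mapping under $\phic$ to the cycle $\Phi_\infty(x)_{[i, j-1]}$. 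Otherwise $x$ and $y$ agree outside a finite window $[N_1, N_2]$, and they trace out two distinct paths from $a := x_{N_1-1}$ to $b := x_{N_2+1}$ in $G$ with identical $\Phi$-images; strong connectivity of $G$ supplies a return path from $b$ to $a$, and concatenating produces two distinct cycles of equal length with a common $\phic$-image. Either case contradicts injectivity of $\phic$.
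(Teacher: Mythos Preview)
Your proof is correct and follows essentially the same approach as the paper: the forward direction via bijectivity on periodic points, surjectivity in the backward direction via Proposition~\ref{prop:dense_subset}, and injectivity by producing two distinct cycles with the same $\phic$-image using pigeonhole on the pairs $(x_i,y_i)$ in one case and strong connectivity of $G$ to close up a path into a cycle in the other. The only cosmetic difference is your dichotomy (infinitely many disagreements versus finitely many) in place of the paper's ($|V_G|^2+1$ consecutive disagreements versus not), but both lead to the same two sub-arguments.
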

\begin{proof}
  If $\Phi_\infty$ is a conjugacy, then it is a bijection on periodic points; we conclude $\phic$ is a bijection.
  For the converse, suppose $\Phi_\infty$ is not a conjugacy.
  We proceed in cases.

  (Case 1) If $\Phi_\infty$ is not injective, there exist distinct points $p,q\in X_G$ such that $\Phi_\infty(p)=\Phi_\infty(q)$.
  (Case 1a) Suppose first that $p,q$ disagree at $|V_G|^2+1$ consecutive indices, meaning the words $p_{[a,b]},q_{[a,b]}$ disagree at every index for $a,b\in\Z$ with $b-a=|V_G|^2+1$.
  Consider all possible pairs of states in $G$; there are $|V_G|^2$ such pairs.
  Thus there exist distinct indices $c,d\in \{a,a+1\ldots,b\}$ such that $(p_c,q_c)=(p_d,q_d)$.
  But then $\phic(p_{[c,d-1]})=\phic(q_{[c,d-1]})$.

  (Case 1b) Suppose instead that $p,q$ do not disagree at $|V_G|^2+1$ consecutive indices: there exist indices $a,b$ with $a<b-1$ such that $p,q$ agree at indices $a$ and $b$, but $p,q$ disagree at every index between $a$ and $b$.
  Let $w$ be any word connecting $p_b=q_b$ to $p_a=q_a$.
  Then $\phic(p_{[a,b]}w)=\phic(q_{[a,b]}w)$.

  (Case 2) If $\Phi_\infty$ is not surjective, suppose for a contradiction that $\phic$ is bijective.
  Then every periodic point in $X_H$ is mapped to by $\Phi_\infty$.
  Since the periodic points are a dense subset of the compact metric space $X_H$, Proposition~\ref{prop:dense_subset} contradicts the fact that $\Phi_\infty$ is not surjective.
\end{proof}

To verify that the cycle map $\phic$ is bijective, we will test for injectivity explicitly, and rely on counting arguments to check surjectivity.
For injectivity, it turns out that checking cycles up to length $|V_G|^2$ suffices.

\begin{proposition}\label{prop:injective_is_decidible}
  Suppose $\Phi_\infty: X_G\to X_H$ is a 1-block code between irreducible vertex shifts.
  If $\phic$ is injective on $\bigcup_{n=1}^{|V_G|^2} C_n(G)$, then $\phic$ is injective.
\end{proposition}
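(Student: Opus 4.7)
The plan is a standard pigeonhole-and-shortening argument. I would suppose for contradiction that $\phic$ is not injective on all cycles, and pick a pair of distinct cycles $c_1, c_2$ with $\phic(c_1) = \phic(c_2)$ of minimum common length $n$. Under the hypothesis, $n > |V_G|^2$. The goal is to extract a strictly shorter pair of distinct cycles with the same image, contradicting minimality.

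Write $c_1 = v_1 \cdots v_n$ and $c_2 = u_1 \cdots u_n$ (the lengths are forced to agree by the definition of $\phic$). Because $\Phi$ is a $1$-block code, the excerpt tells us $\Phi$ acts coordinate-wise, so $\Phi(v_i) = \Phi(u_i)$ for every $i$. View the paired sequence $(v_i, u_i)_{i=1}^n$ as taking values in $V_G \times V_G$; since $n > |V_G|^2$, pigeonhole produces indices $1 \leq i < j \leq n$ with $(v_i, u_i) = (v_j, u_j)$. Cutting $c_1$ (and in parallel $c_2$) at this returning pair yields two candidate pairs of shorter cycles: the \emph{inside} pair of length $j-i$, namely $v_i \cdots v_{j-1}$ paired with $u_i \cdots u_{j-1}$, and the \emph{outside} pair of length $n-(j-i)$, namely $v_j \cdots v_n v_1 \cdots v_{i-1}$ paired with $u_j \cdots u_n u_1 \cdots u_{i-1}$. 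All four sequences are genuine cycles (the needed closure edges are inherited from $c_1$ and $c_2$, using $v_i = v_j$ and $u_i = u_j$), all have positive length strictly less than $n$, and each inside/outside pair has a common image under $\phic$ by the coordinate-wise action of $\Phi$.

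To close, I would verify that at least one of the two new pairs is distinct: since $c_1 \neq c_2$, there is some index $k$ with $v_k \neq u_k$, and this $k$ lies in either $\{i, \ldots, j-1\}$ (making the inside pair distinct) or its complement inside $\{1, \ldots, n\}$ (making the outside pair distinct). This shorter distinct pair with common $\phic$-image contradicts the minimality of $n$, so no such $n > |V_G|^2$ exists, proving the proposition. I do not anticipate significant obstacles; the only care needed is the routine handling of the boundary cases $i=1$ or $j=n$ when writing down the outside cycle, and a brief check that the empty-segment cases still yield valid cycles. Notably, irreducibility plays no role in this step and presumably only enters later, where it is invoked (via Proposition~\ref{prop:dense_subset}) to convert bijectivity of $\phic$ into surjectivity of $\Phi_\infty$.
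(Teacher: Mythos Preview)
Your proposal is correct and follows essentially the same approach as the paper: both use pigeonhole on the pair sequence $(v_i,u_i)$ to find a repeated pair, split each cycle into two shorter cycles at those indices, and observe that at least one of the resulting pairs remains distinct. The only cosmetic difference is that the paper phrases this as strong induction on the cycle length whereas you phrase it as a minimal-counterexample argument; your remark that irreducibility is unused here is also accurate.
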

\begin{proof}
  Let $c,d$ be distinct cycles of size $|c|=|d|=k>|V_G|^2$.
  Proceeding by strong induction, suppose $\phic$ is injective on all cycles of size less than $k$.
  There are $|V_G|^2$ possible pairs of states in $G$.
  Thus there exist distinct indices $a,b$ such that $(c_a,d_a)=(c_b,d_b)$.
  That is, $c_{[a,b-1]},d_{[a,b-1]}$ are cycles of the same length and $c_{[b,a-1]},d_{[b,a-1]}$ are cycles of the same length.
  Since $c,d$ were distinct, we can assume without loss of generality that $c_{[a,b-1]},d_{[a,b-1]}$ are distinct.
  By the induction hypothesis, $\phic(c_{[a,b-1]})\neq \phic(d_{[a,b-1]})$.
  Thus $\phic(c)\neq \phic(d)$.
\end{proof}

Proposition~\ref{prop:injective_is_decidible} suggests the na\"ive algorithm of checking all cycles up to length $|V_G|^2$ to verify injectivity of $\phic$.
This algorithm is remarkably inefficient, however; letting $n=|V_G|$, there can be $\Omega(n^{n^2})$ cycles of length up to $n^2$, as is the case for the complete graph.
Fortunately, these checks can be performed much more efficiently, by rephrasing them as a search problem in a graph built from pairs of vertices in $G$.
This procedure is outlined in Algorithm~\ref{alg:phi_c_injective}.

\begin{theorem}\label{thm:verify1block}
  Let $X_G$ be a vertex shift and $A=\{1,2,\ldots,m\}$.
  Then any given map $\phiv:V_G\to A$ induces a map $\phic:\bigcup\limits_n C_n(G)\to \bigcup\limits_n A^n$.
  Deciding if $\phic$ is injective can be determined in $O(|V_G|^4)$ time.
\end{theorem}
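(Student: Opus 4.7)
The plan is to recast injectivity of $\phic$ as a reachability question in an auxiliary product graph, after which a single strongly connected components (SCC) computation will yield the $O(|V_G|^4)$ bound.

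Two distinct cycles $c,d \in C_n(G)$ satisfy $\phic(c)=\phic(d)$ exactly when $\phiv(c_i)=\phiv(d_i)$ for every index $i$. Accordingly, I would construct a directed graph $H$ with vertex set $V_H = \{(u,v)\in V_G\times V_G : \phiv(u)=\phiv(v)\}$ and edge set $E_H = \{((u,v),(u',v')) : (u,u'),(v,v')\in E_G\}$. I would then prove the following correspondence: every closed walk of length $n$ in $H$ projects coordinate-wise onto two cycles $c,d \in C_n(G)$ with $\phic(c)=\phic(d)$, and if the walk passes through any \emph{off-diagonal} vertex $(u,v)$ with $u\neq v$ then $c\neq d$. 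Conversely, any pair of distinct cycles $c\neq d$ in $C_n(G)$ with $\phic(c)=\phic(d)$ pairs up to a closed walk in $H$, and because $c\neq d$ there is some index $i$ with $c_i\neq d_i$, at which we may start the walk so that it begins at an off-diagonal vertex. Thus $\phic$ is non-injective if and only if some off-diagonal vertex of $H$ lies on a directed cycle.

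Next I would translate ``lies on a directed cycle'' into a standard SCC condition: a vertex $(u,v)$ lies on a directed cycle in $H$ iff it carries a self-loop (equivalently $(u,u),(v,v)\in E_G$) or it lies in an SCC of size at least two. The algorithm therefore computes the SCCs of $H$ and answers ``not injective'' iff some off-diagonal vertex meets one of these conditions.

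For the complexity, $|V_H| \le |V_G|^2$ and $|E_H| \le \sum_{(u,v)\in V_H}|N^+(u)||N^+(v)| \le |E_G|^2 \le |V_G|^4$, and both can be enumerated in $O(|V_G|^4)$ time by scanning pairs. Tarjan's SCC algorithm runs in $O(|V_H|+|E_H|)=O(|V_G|^4)$, and the final scan over off-diagonal vertices costs only $O(|V_G|^2)$. The main subtlety is keeping straight the paper's convention that distinct cyclic rotations count as distinct elements of $C_n(G)$, which is precisely what allows a closed walk in $H$ through an off-diagonal vertex to be read off as two different cycles in $G$; once this bookkeeping is in place, the equivalence above is routine and Proposition~\ref{prop:injective_is_decidible} is not strictly needed, since the product-graph reformulation handles all cycle lengths simultaneously.
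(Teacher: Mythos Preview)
Your proposal is correct and follows essentially the same approach as the paper: build the fiber-product graph on pairs of vertices, argue that $\phic$ fails injectivity precisely when an off-diagonal pair lies on a directed cycle, and detect this via Tarjan's SCC algorithm in $O(|V_G|^4)$ time. The only cosmetic difference is that you restrict the product graph's vertex set to same-label pairs up front, whereas the paper takes all of $V_G\times V_G$ and encodes the label constraint in the edge set; the resulting cycle structure is identical.
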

\begin{proof}
  First we build the directed meta-graph $M=(V_M,E_M)$ where $V_M=\{(v_1,v_2):v_1,v_2\in V_G\}$ and $E_M=\{((v_1,v_2),(u_1,u_2)):\phiv(v_1)=\phiv(u_1),\phiv(v_2)=\phiv(u_2),(v_1,u_1)\in E_G,\text{ and }(v_2,u_2)\in E_G\}$.
  That is, $M$ is a graph on pairs of vertices from $G$, with an edge connecting pairs $P_1,P_2$ if and only if
  (i) there is a pair of (possibly non-distinct) edges in $G$ connecting the two vertices in $P_1$ to the vertices in $P_2$,
  and (ii) the induced map on words of length two (i.e., edges) maps the two edges together.
  $M$ can be constructed in $O(|V_G|^4)$ time.

  Given $M$, the map $\phic$ is injective if an only if there is no cycle in $M$ which passes through a vertex $(v_1,v_2)\in V_M$ with $v_1\neq v_2$.
  Furthermore, such a cycle in $M$ exists if and only if $M$ has a strongly connected component containing an edge and a vertex $(v_1,v_2)$ with $v_1\neq v_2$.
  Tarjan's strongly connected components algorithm~\cite{tarjan1972depth} now applies, in $O(|V_M|+|E_M|)=O(|V_G|^4)$ time.
\end{proof}

Putting the above results together with the higher-block codes gives the desired algorithm to verify $k$-block conjugacies; the full conjugacy algorithm for $k=1$ is outlined in Algorithm~\ref{alg:phi_c_bijective}.

\begin{corollary}\label{cor:verify_conjugacy_irreducible}
  Given a $k$-block code $\Phi_\infty:X_G^k\to X_H$ between irreducible vertex shifts, deciding if $\Phi_\infty$ is a conjugacy is in \P.
  In particular, it can be determined in $O(|V_G|^{4k})$ time.
\end{corollary}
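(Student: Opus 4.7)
The plan is to reduce to the 1-block case and assemble the ingredients developed earlier in this section. First, by passing to the $k$th higher block shift, the given $k$-block code $\Phi$ corresponds to a 1-block code $\Phi^{[k]}:V_{G^{[k]}}\to V_H$, and each induces a conjugacy if and only if the other does. The graph $G^{[k]}$ has at most $|V_G|^k$ vertices and can be constructed in time polynomial in $|V_G|^k$. Since higher block shifts are conjugate to the original, irreducibility is preserved, so we have reduced to verifying a 1-block map between irreducible vertex shifts on at most $|V_G|^k$ vertices.

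Second, by Theorem~\ref{thm:conjugacy_iff_cycle_bijection}, $\Phi^{[k]}_\infty$ is a conjugacy if and only if the induced $\phic$ is a bijection on cycles, and I will check injectivity and surjectivity separately. Injectivity of $\phic$ is exactly what Theorem~\ref{thm:verify1block} decides, in $O(|V_{G^{[k]}}|^4) = O(|V_G|^{4k})$ time, which I expect to dominate the overall running time.

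Third, for surjectivity I exploit the fact that $\phic$ is length-preserving, i.e.\ maps $C_n(G^{[k]})$ into $C_n(H)$ for every $n$. Given that $\phic$ is already known to be injective, surjectivity on cycles is equivalent to the cardinality equalities $|C_n(G^{[k]})|=|C_n(H)|$ for all $n\ge 1$. Here I use the standard fact $|C_n(G)|=\tr(A_G^n)$, where $A_G$ is the adjacency matrix. By the Cayley--Hamilton theorem (via Newton's identities), it suffices to verify $\tr(A_{G^{[k]}}^n)=\tr(A_H^n)$ for $n=1,\ldots,N$ with $N=\max(|V_{G^{[k]}}|,|V_H|)$; once these match, all higher traces match automatically. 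These $N$ traces can be computed by iterated matrix multiplication on matrices of size at most $|V_G|^k$, again comfortably within the $O(|V_G|^{4k})$ budget.

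The main conceptual obstacle is the injectivity check, since there can be super-exponentially many cycles of length up to $|V_{G^{[k]}}|^2$, so enumeration is hopeless; this is exactly what Theorem~\ref{thm:verify1block}'s meta-graph construction resolves. Surjectivity, despite also nominally concerning arbitrarily long cycles, becomes routine once injectivity is established, because the trace method converts it into a comparison of two finite sequences of integers without any enumeration of cycles.
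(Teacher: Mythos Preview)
Your proposal is correct and follows essentially the same route as the paper: pass to the higher block shift to reduce to a 1-block code, invoke Theorem~\ref{thm:conjugacy_iff_cycle_bijection} to reduce to bijectivity of $\phic$, apply Theorem~\ref{thm:verify1block} for injectivity, and compare traces of adjacency-matrix powers for surjectivity. The only cosmetic differences are that the paper cites the Leverrier--Faddeev algorithm rather than Cayley--Hamilton/Newton's identities for the trace bound, and checks traces up to $|V_G|$ (after reducing to $k=1$) rather than your $\max(|V_{G^{[k]}}|,|V_H|)$.
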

\begin{proof}
  Given $G,H$, we first pass to the $k$th higher block shift $X_{G^{[k]}}$ of $X_G$, recalling that $\Phi_\infty^{[k]}$ is a 1-block code and $\Phi_\infty$ is a conjugacy if and only if $\Phi_\infty^{[k]}$ is a conjugacy~\cite[Proposition 1.5.12]{lind1999introduction}.
  We can construct $\Phi^{[k]}_\infty:X_{G^{[k]}}\to X_H$ in time $O(|V_{G^{[k]}}| + |E_{G^{[k]}}|) = O(|V_{G^{[k]}}|^3)$.
  Noting that $|V_{G^{[k]}}|\leq |V_G|^k$, it thus suffices to show the case $k=1$.

  By Theorem~\ref{thm:conjugacy_iff_cycle_bijection}, $\Phi_\infty$ is a conjugacy if and only if $\phic$ is a bijection.
  As $k=1$, Theorem~\ref{thm:verify1block} shows that injectivity of $\phic$ can be determined in $O(|V_G|^4)$ time.
  To show $\phic$ is surjective, it suffices to check that $|C_i(G)|=|C_i(H)|$ for all $i\in\N$.
  Letting $A(G),A(H)$ be the adjacency matrices of $G,H$, we note $|C_i(G)|=\tr(A(G)^i)$, so our desired check is equivalent to checking $\tr(A(G)^i)=\tr(A(H)^i)$ for all $i\in\N$~\cite[Proposition 2.2.12]{lind1999introduction}.
  In fact, it suffices to check up to $i=|V_{G}|$~\cite{hou1998leverrierfadeev,leverrier1840}.
  Calculating $\tr(A(G)^i),\tr(A(H)^i)$ for all $i\in\{1,\ldots,|V_G|\}$ can be done by repeated multiplication in $O(|V_G|^{1+\omega})=O(|V_G|^4)$ time, where $\omega$ is the exponent of matrix multiplication.
\end{proof}

\subsection{Reducible Case}\label{sec:verify-reducible}

Several useful statements about conjugacy between irreducible vertex shifts fail to hold in the reducible case.
First, given a sliding block code $\Phi_\infty:X_G\to X_H$ between irreducible vertex shifts, it is known that if $\Phi_\infty$ is injective and $G,H$ have the same topological entropy, then $\Phi_\infty$ is a conjugacy~\cite[Corollary 8.1.20]{lind1999introduction}.
(The topological entropy of a shift $X$ is defined as $h(X)=\lim\limits_{n\to\infty}\frac{1}{n}\log_2|\cB_n(X)|$.)
If the shifts are reducible, however, $\Phi_\infty$ can satisfy these conditions but fail to be surjective (Figure~\ref{fig:simple_reducible_examples}a).
Second, we have from Theorem~\ref{thm:conjugacy_iff_cycle_bijection} that if $\Phi_\infty$ is a 1-block code between irreducible vertex shifts, then $\phic$ being a bijection implies $\Phi_\infty$ is bijective.
In the reducible case, $\phic$ can be bijective while $\Phi_\infty$ fails injectivity (Figure~\ref{fig:simple_reducible_examples}b) or surjectivity (Figure~\ref{fig:simple_reducible_examples}a).

As an even stronger test, one might guess for reducible vertex shifts that if $\Phi_\infty:X_G\to X_H$ is surjective and the induced maps between irreducible subgraphs are all conjugacies, then $\Phi_\infty$ is a conjugacy.
If true, this statement would suggest applying the algorithm in Corollary~\ref{cor:verify_conjugacy_irreducible} to each irreducible subgraph, at which point one would only need to test surjectivity.
Yet this statement is also false; $\phic$ being a bijection implies neither the injectivity nor the surjectivity of $\Phi_\infty$ (Figure~\ref{fig:simple_reducible_examples}).
By extending the argument of Theorem~\ref{thm:conjugacy_iff_cycle_bijection}, one can correct the statement by adding a check for diamonds: if $\Phi_\infty$ is surjective, the induced maps between irreducible subgraphs are all conjugacies, and $\Phi$ does not collapse a diamond, then $\Phi_\infty$ is a conjugacy.
Unfortunately, while this revised statement does break the problem of verifying a proposed 1-block conjugacy into more manageable pieces, how to turn it into a decision procedure, let alone an efficient algorithm, is far from clear.

\begin{figure}
  \centering
  \begin{tabular}{cc}
  	\raisebox{55pt}{(a)} &
    \begin{tikzpicture}[scale=1]
    \node (a) at (0,0) {$a$};
    \node (b) at (1,0.5) {$b$};
    \node (d) at (1,-0.5) {$d$};
    \node (c) at (2,0.5) {$c$};
    \node (e) at (2,-0.5) {$e$};
    \node (f) at (3,0) {$f$};
    \draw [<-] (a) edge (b) (b) edge (c) (c) edge (f) (a) edge (d) (d) edge (e) (e) edge (f) (f) edge[out=90, in=90] (a);
    
    \node (g) at (1.5,-2) {$g$};
    \draw [->] (g) edge[loop below] (g);
    \draw[->] (d) edge (g);
    
    \node (la) at (3.75,0){};
    \node (ra) at (5.25,0){};
    \draw [->] (la) edge (ra);
    
    \node (a2) at (6,0) {$a$};
    \node (b2) at (7,0) {$bd$};
    \node (c2) at (8,0.5) {$c$};
    \node (e2) at (8,-0.5) {$e$};
    \node (f2) at (9,0) {$f$};
    \draw [<-] (a2) edge (b2) (b2) edge (c2) (c2) edge (f2) (b2) edge (e2) (e2) edge (f2) (f2) edge[out=90, in=90] (a2);
    
    \node (g2) at (7.5,-2) {$g$};
    \draw [->] (g2) edge[loop below] (g2);
    \draw[->] (b2) edge (g2);
    \end{tikzpicture}
  	\\[10pt]
  	\raisebox{55pt}{(b)} & 
    \begin{tikzpicture}[scale=1]
    \node (a) at (0,0) {$a$};
    \node (b) at (1,0.5) {$b$};
    \node (d) at (1,-0.5) {$d$};
    \node (c) at (2,0.5) {$c$};
    \node (e) at (2,-0.5) {$e$};
    \node (f) at (3,0) {$f$};
    \draw [<-] (a) edge (b) (b) edge (c) (c) edge (f) (a) edge (d) (d) edge (e) (e) edge (f) (f) edge[out=90, in=90] (a);
    
    \node (g) at (1.5,-2) {$g$};
    \draw [->] (g) edge[loop below] (g);
    \draw[->] (c) edge[out=240, in=100] (g) (e) edge (g);
    
    \node (la) at (3.75,0){};
    \node (ra) at (5.25,0){};
    \draw [->] (la) edge (ra);
    
    \node (a2) at (6,0) {$a$};
    \node (b2) at (7,0.5) {$b$};
    \node (d2) at (7,-0.5) {$d$};
    \node (c2) at (8,0) {$ce$};
    \node (f2) at (9,0) {$f$};
    \draw [<-] (a2) edge (b2) (b2) edge (c2) (c2) edge (f2) (a2) edge (d2) (d2) edge (c2) (f2) edge[out=90, in=90] (a2);
    
    \node (g2) at (7.5,-2) {$g$};
    \draw [->] (g2) edge[loop below] (g2);
    \draw[->] (c2) edge (g2);
    \end{tikzpicture}
  \end{tabular}
  \caption{
    Counter-examples showing various statements which hold in the irreducible case fail in the reducible case.
    Note that all four shifts have the same topological entropy, $h(X)=\frac{1}{4}$.
    (a) A 1-block code between two reducible shifts which restricts to conjugacies between the irreducible components (and hence $\phic$ is a bijection) but is not surjective.
    (b) A 1-block code between two reducible shifts which restricts to conjugacies between the irreducible components but is not injective.}
  \label{fig:simple_reducible_examples}
\end{figure}
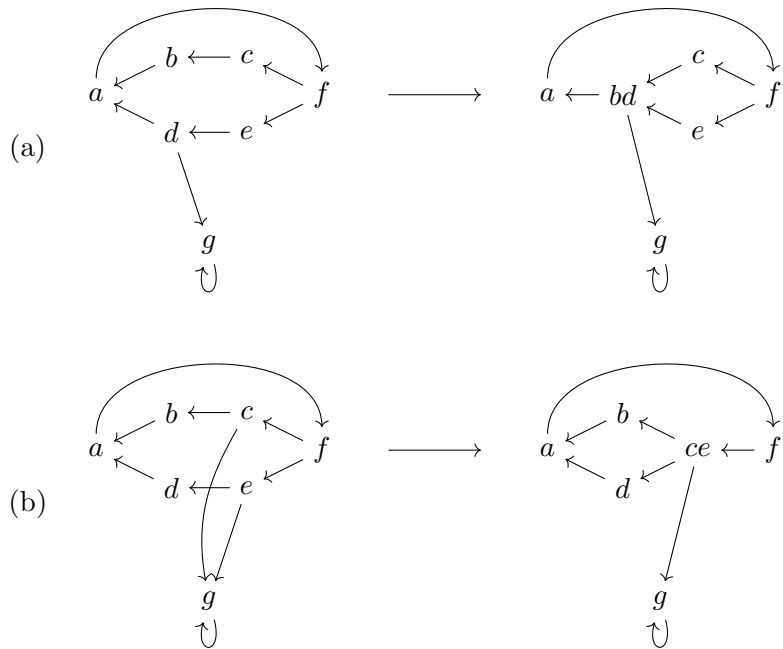

To verify a potential conjugacy between vertex shifts efficiently, we will instead apply a more direct reduction to the irreducible case.
Given a 1-block code $\Phi_\infty:X_G\to X_H$ between reducible vertex shifts, we will extend $G$ and $H$ to irreducible graphs while preserving the conjugacy or non-conjugacy of $\Phi_\infty$.
The key operation for this extension is the following procedure, which adds a new sink vertex to a sink component in such a way as to preserve conjugacy/non-conjugacy.
We will then apply this procedure to every sink component, and in reverse to every source component, until we have enough structure to connect the new sink verticies back to the new source vertices through a new vertex $*$, rendering both graphs irreducible.

Let $T$ be a sink component of $H$ and $T'=\Phi^{-1}(T)$ be the subgraph of $G$ which maps to $T$ under $\Phi_\infty$.
The procedure is as follows:
\begin{enumerate}
\item Pick an arbitrary vertex $v$ in $T$.
\item Pick an arbitrary cycle $c$ in $T$ ending at $v$ of length $|c|\leq |T|$.
\item Add the vertex $t$ along with the edges $(t,t),(v,t)$ to $H$.
  Call this new graph $\hat{H}$.
\item Select the vertices $v'\in \phiv^{-1}(v)$ which are followed by an infinite word $w'$ such that $\phiw(v'w')=vc^\infty$.
  Call this set of vertices $V'$.
\item Add the vertex $t'$ and the edges $\{(v',t'):v'\in V'\cup \{t'\}\}$ to $G$.
  Call this new graph $\hat{G}$.
\item Define $\hat{\Phi}_\infty:X_{\hat{G}}\to X_{\hat{H}}$ by
  $\hat{\phiv}(u)=
  \begin{cases}
    \phiv(u), & \text{if }u\neq t' \\ t, & \text{if }u=t'
  \end{cases}$~.
\end{enumerate}

\begin{proposition}\label{prop:add_a_vertex}
  Let $\Phi_\infty:X_G \to X_H$ be a 1-block code between reducible vertex shifts.
  Then $\hat{\Phi}_\infty:X_{\hat{G}}\to X_{\hat{H}}$ as described above is a conjugacy if and only if $\Phi_\infty$ is a conjugacy.
\end{proposition}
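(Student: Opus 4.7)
The plan is to exploit the sink structure of $t$ in $\hat H$ and $t'$ in $\hat G$. Since $N^+(t)=\{t\}$ and $N^-(t)=\{v\}$ in $\hat H$, every point of $X_{\hat H}$ is either a point of $X_H$ (no occurrence of $t$) or has the form $z\cdot t^\infty$, where $z$ is a left-infinite walk in $H$ terminating at $v$. Similarly, since $N^+(t')=\{t'\}$ and $N^-(t')=V'\cup\{t'\}$, every point of $X_{\hat G}$ is either in $X_G$ or has the form $w\cdot t'^\infty$ for a left-infinite walk $w$ in $G$ ending at some vertex of $V'$ (together with the degenerate all-$t'$ fixed point, which maps to the all-$t$ fixed point and has no other preimage). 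Moreover $\hat\Phi_\infty|_{X_G}=\Phi_\infty$ and $\hat\Phi_\infty(w\cdot t'^\infty)=\Phi_\infty(w)\cdot t^\infty$.

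For the ``only if'' direction, assume $\Phi_\infty$ is a conjugacy. Points of $X_H\subseteq X_{\hat H}$ are hit by surjectivity of $\Phi_\infty$, so take $y=z\cdot t^\infty\in X_{\hat H}$ with $z$ ending at $v$ at index $0$. Form the bi-infinite point $y^*\in X_H$ by replacing the right tail with $vc^\infty$, and let $x^*=\Phi_\infty^{-1}(y^*)\in X_G$. Setting $v':=x^*_0$, we have $\phiv(v')=v$ and the forward tail $x^*_0 x^*_1\cdots$ maps under $\phiw$ to $vc^\infty$, hence $v'\in V'$ by definition. Defining $x_i=x^*_i$ for $i\leq 0$ and $x_i=t'$ for $i\geq 1$ yields a point of $X_{\hat G}$ (the edge $(v',t')$ exists by construction) with $\hat\Phi_\infty(x)=y$. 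For injectivity, suppose $\hat\Phi_\infty(x_1)=\hat\Phi_\infty(x_2)$. If both $x_j\in X_G$, apply injectivity of $\Phi_\infty$; if exactly one contains $t'$, the images disagree on whether they contain $t$, a contradiction; if both have the form $w_j\cdot t'^\infty$ with $w_j$ ending at $v_j'\in V'$, extend each $w_j$ on the right by the witness forward word guaranteed by $v_j'\in V'$ to obtain bi-infinite points $p_j\in X_G$ satisfying $\Phi_\infty(p_1)=\Phi_\infty(p_2)$, so injectivity of $\Phi_\infty$ yields $p_1=p_2$, and hence $w_1=w_2$ and $x_1=x_2$.

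The ``if'' direction is essentially immediate. Injectivity of $\Phi_\infty$ follows from that of $\hat\Phi_\infty$ via $\Phi_\infty=\hat\Phi_\infty|_{X_G}$. For surjectivity, any $y\in X_H\subseteq X_{\hat H}$ admits a preimage $x\in X_{\hat G}$; since $\hat\phiv^{-1}(t)=\{t'\}$ and $y$ contains no $t$, $x$ contains no $t'$ and therefore lies in $X_G$. The main obstacle is the forward-direction surjectivity, which is precisely where the definition of $V'$ earns its keep: without restricting to vertices that extend forward to $vc^\infty$, the preimage $x^*$ of the auxiliary point $y^*$ need not end at a vertex from which we added an edge to $t'$, and the construction would fail to produce a preimage of $y$ in $X_{\hat G}$. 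The cycle $c$ thus plays the role of a canonical forward witness aligning preimages in $G$ with the new edges into $t'$.
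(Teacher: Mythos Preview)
Your proof is correct and uses essentially the same ideas as the paper's: both exploit the cycle $c$ and the definition of $V'$ to pass between points with a $t$- (or $t'$-) tail and points lying entirely in $X_H$ (or $X_G$). The only difference is that the paper argues the harder implication by contrapositive (if $\hat\Phi_\infty$ fails injectivity or surjectivity, then so does $\Phi_\infty$), whereas you prove it directly; the substance is identical. One cosmetic point: your ``if'' and ``only if'' labels are swapped---in the statement ``$\hat\Phi_\infty$ is a conjugacy if and only if $\Phi_\infty$ is a conjugacy'', the ``if'' direction is the one assuming $\Phi_\infty$ is a conjugacy.
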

\begin{proof}
  Since $X_G$ is a subshift of $X_{\hat{G}}$ (and similarly for $H$) and $\hat{\Phi}_\infty$ preserves $\Phi_\infty$, it immediately follows that $\Phi_\infty$ is a conjugacy whenever $\hat{\Phi}_\infty$ is.
  For the converse, suppose $\hat{\Phi}_\infty$ is not a conjugacy.

  If $\hat{\Phi}_\infty$ is not injective,
  we have distinct points $p_1,p_2\in X_{\hat G}$ such that $\hat{\Phi}_\infty(p_1)=\hat{\Phi}_\infty(p_2)=q$.
  If $q\in X_H$, then $p_1,p_2\in X_G$ by the definition of $\hat{\Phi}_\infty$, so $\Phi_\infty$ is not injective.
  If $q\notin X_H$, then $q=wvt^\infty$.
  By the definition of $\hat{\Phi}_\infty$, we have $p_1=w_1v_1t'^\infty,p_2=w_2v_2t'^\infty$.
  By the construction of $N^-(t')$, there exist infinite words $w_1',w_2'$ such that $v_1w_1',v_2w_2'$ are words in $G$ and $\phiw(w_1')=c^\infty=\phiw(w_2')$.
  Thus $\Phi_\infty(w_1v_1w_1')=\Phi_\infty(w_2v_2w_2')$, and $\Phi_\infty$ is not injective.

  If $\hat{\Phi}_\infty$ is not surjective,
  then there exists $p\in X_{\hat{H}}$ which is not mapped to.
  If $p\in X_H$, then $\Phi$ is not surjective.
  Otherwise, $p \notin X_H$, so $p=wvt^\infty$.
  But again noting the construction of $N^-(t)$, the point $wvc^\infty$ is not mapped to.
\end{proof}

We now construct the final graphs $G^*$, $H^*$ using the above procedure as well as one additional step below.
Let $T_1,\ldots,T_m$ be the sink components of $H$, and $S_1,\ldots,S_\ell$ the source components, with $T_i'=\Phi^{-1}(T_i)$ and $S_i'=\Phi^{-1}(S_i)$ the corresponding inverse image subgraphs of $G$.
We apply the above procedure iteratively to every sink component, and every source component by reversing the edge direction in $G,H$, applying the procedure, and reversing edges back.
Let $\hat G, \hat H$ denote the graphs after applying the procedure to the $m$ sinks and $\ell$ sources.
Note that, by construction, each sink or source component in $\hat H$ contains a single state.
Furthermore, the preimage of each of these states under the induced map $\hat{\phiv}$ in $\hat{G}$ also contains a single state.
Denote the source states in $\hat{H}$ as $\{s_1,\ldots,s_\ell\}$ and the sink states as $\{t_1,\ldots,t_m\}$.
In $\hat{G}$, denote the preimages as $s_i'=\phiv^{-1}(s_i), t_j'=\phiv^{-1}(t_j)$.
We extend $\hat{H},\hat{G}$ to the irreducible graphs $H^*,G^*$ as follows:
\begin{enumerate}
\item Add a new vertex $*$ to both $\hat{H}$ and $\hat{G}$.
\item In $H^*$, set $N^-(*)=\{t_1,\ldots,t_m\}$, $N^+(*)=\{s_1,\ldots,s_\ell\}$.
  In $G^*$, set $N^-(*)=\{t_1',\ldots,t_m'\}$, $N^+(*)=\{s_1',\ldots,s_\ell'\}$.
\item Define $\Phi_\infty^*:X_{G^*}\to X_{H^*}$ by $\phiv^*(u)=
  \begin{cases}
    \hat{\phiv}(u), &\text{if }u\neq * \\ *,&\text{if } u=*
  \end{cases}$~.
\end{enumerate}

\begin{proposition}
  Let $\Phi_\infty:X_G\to X_H$ be a 1-block code between reducible vertex shifts.
  Then $\Phi_\infty^*:X_{G^*}\to X_{H^*}$ as described in the construction above is a conjugacy if and only if $\Phi_\infty$ is a conjugacy.
\end{proposition}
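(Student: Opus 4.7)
The plan is to factor the argument into two conjugacy-preserving reductions. Step one iterates Proposition~\ref{prop:add_a_vertex} once for each of the $m$ sink components of $H$; temporarily reversing all edges in both $G$ and $H$ (a time-reversal symmetry that preserves conjugacy), it then iterates the proposition once more for each of the $\ell$ source components, and restores edge orientations. Each application preserves conjugacy status, so after all $m+\ell$ iterations we obtain $(\hat G,\hat H,\hat\Phi_\infty)$ with $\hat\Phi_\infty$ a conjugacy if and only if $\Phi_\infty$ is. It therefore suffices to show $\Phi_\infty^*$ is a conjugacy if and only if $\hat\Phi_\infty$ is.

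The forward direction will be immediate. Since $\phiv^*$ sends $*$ only to $*$, the points of $X_{G^*}$ avoiding $*$ form exactly $X_{\hat G}$ (analogously for $H$), and $\Phi_\infty^*$ restricts to $\hat\Phi_\infty$ on this subshift. Injectivity of $\Phi_\infty^*$ restricts directly to injectivity of $\hat\Phi_\infty$; for surjectivity, any $q\in X_{\hat H}$ has no $*$ coordinate, so any $\Phi_\infty^*$-preimage of $q$ avoids $*$ and lies in $X_{\hat G}$.

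The converse is the heart of the proof. I will lean on three structural facts from the construction: (a) $\phiv^*$ sends $*$ only to $*$; (b) each source $s_k$ and sink $t_j$ of $\hat H$ has a unique preimage $s_k'$, $t_j'$ in $\hat G$; (c) each of $s_k,t_j,s_k',t_j'$ carries a self-loop. For injectivity, given $\Phi_\infty^*(p_1)=\Phi_\infty^*(p_2)=q$, (a) forces the $*$-positions of $p_1,p_2,q$ to coincide, and (b) forces the entries of $p_1,p_2$ adjacent to each $*$ to be the appropriate $s_k'$ or $t_j'$. On each maximal $*$-free segment of $q$, the segments $p_1|_{[a+1,b-1]}$ and $p_2|_{[a+1,b-1]}$ are finite walks in $\hat G$ sharing endpoints; by (c) I extend each to a bi-infinite walk in $X_{\hat G}$ by prepending $(s_k')^\infty$ and appending $(t_j')^\infty$. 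Both extensions map under $\hat\Phi_\infty$ to the same point of $X_{\hat H}$, so injectivity of $\hat\Phi_\infty$ forces them equal, whence $p_1=p_2$ on that segment; infinite $*$-free tails are handled analogously. For surjectivity, given $q\in X_{H^*}$, I place $*$ wherever $q$ does and, on each maximal $*$-free segment, pad $q$ to a bi-infinite point of $X_{\hat H}$ with tails $s_k^\infty$ and $t_j^\infty$, pull back via $\hat\Phi_\infty^{-1}$ (whose tails must be $(s_k')^\infty$ and $(t_j')^\infty$ by (b)), and splice the interior into $p$. The main obstacle I expect is verifying that these per-segment preimages splice consistently into a valid bi-infinite walk in $X_{G^*}$; this should succeed precisely because the seams are $*$, the forced segment endpoints are $s_k'$ and $t_j'$, and the edges $(*,s_k')$ and $(t_j',*)$ belong to $G^*$ by construction.
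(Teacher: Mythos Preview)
Your proposal is correct, but it proceeds by a genuinely different route than the paper. Both arguments share Step~1 (iterate Proposition~\ref{prop:add_a_vertex}, using time-reversal for sources) and the easy forward direction. For the converse, however, the paper argues by contrapositive and exploits precisely the property the construction was designed to achieve: since $G^*,H^*$ are irreducible, Theorem~\ref{thm:conjugacy_iff_cycle_bijection} applies, so a failure of $\Phi_\infty^*$ to be a conjugacy is witnessed by cycles. Any such cycle can be routed through $*$, and then the bijectivity of $\phiv^*$ on $\{s_i,t_j\}$ pins down the cycle's form as $*s_iwt_j$. A pair of colliding cycles then collapses a diamond in $\hat G$ (giving non-injectivity of $\hat\Phi_\infty$ via Lemma~\ref{lemma:diamond_lemma}), while a missed cycle yields the missed point $s_i^\infty w t_j^\infty$ in $X_{\hat H}$.

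Your approach instead works directly with bi-infinite points: you segment at $*$-positions and use the self-loops at $s_k',t_j'$ to promote each finite segment to a bona fide point of $X_{\hat G}$, where the hypothesis on $\hat\Phi_\infty$ can be invoked. This is more elementary---it does not need the cycle characterization at all---but it is also longer, since you must separately treat finite segments, half-infinite $*$-free tails, and the case $q\in X_{\hat H}$ with no $*$. The paper's route is shorter and thematically cleaner (it uses irreducibility, which is the whole point of building $G^*,H^*$), while yours is more self-contained. One small gap to patch when you write it out: your surjectivity description pads ``with tails $s_k^\infty$ and $t_j^\infty$'', which presumes the segment is finite; make explicit that half-infinite tails are padded on only one side, and that a $*$-free $q$ lies in $X_{\hat H}$ and is handled directly.
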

\begin{proof}
  By Proposition~\ref{prop:add_a_vertex}, $\hat\Phi_\infty:X_{\hat G}\to X_{\hat H}$ is a conjugacy if and only if $\Phi_\infty$ is a conjugacy, where as above the graphs $\hat G, \hat H$ immediately precede the addition of the vertex $*$.
  As in the proof of Proposition~\ref{prop:add_a_vertex}, $X_{\hat G}$ is a subshift of $X_{G^*}$ (and similarly for $\hat H$) and $\Phi^*_\infty$ preserves $\Phi_\infty$, so $\hat\Phi_\infty$ is a conjugacy if $\Phi_\infty^*$ is.
  For the other direction, suppose $\Phi^*_\infty$ is not a conjugacy.

  First suppose $\Phi_\infty^*$ is not injective.
  Since $G^*,H^*$ are irreducible, we have cycles $c,d$ in $G^*$ from Theorem~\ref{thm:conjugacy_iff_cycle_bijection} such that $\phic^*(c)=\phic^*(d)$.
  Without loss of generality, $c,d$ pass through $*$.
  Since $\phiv^*$ is bijective on $\{s_1,\ldots,s_\ell,t_1,\ldots,t_m\}$, we have $c=*s_iw_1t_j,d=*s_iw_2t_j$.
  But then $\hat\phiv$ collapses the diamond $(s_iw_1t_j,s_iw_2t_j)$, so by Lemma~\ref{lemma:diamond_lemma}, $\hat\Phi_\infty$ is not injective.

  Now suppose $\Phi_\infty^*$ is not surjective.
  Again by Theorem~\ref{thm:conjugacy_iff_cycle_bijection}, we know there is a cycle $c$ which is not in the image of $\phic^*$.
  Without loss of generality, we can assume $c=*s_iwt_j$.
  By the construction of $N^-(*),N^+(*)$, we conclude that $s_iwt_j$ is not in the image of $\hat\phiw$.
  Thus $s_i^\infty w t_j^\infty$ is a point in $X_{\hat H}$ which is not in the image of $\hat\Phi_\infty$.
\end{proof}

We now have that given reducible vertex shifts $X_G,X_H$ and a proposed 1-block conjugacy between them, the shifts can be embedded into irreducible shifts such that the conjugacy or non-conjugacy is preserved.
Next we show this embedding can be performed efficiently; the procedure described in the proof is outlined in Algorithm~\ref{alg:is_conjugacy_reducible}.

\begin{theorem}\label{thm:construct_irreducible_supershit}
  Given reducible vertex shifts $X_G,X_H$ and a 1-block code as $\phiv:V_G\to V_H$, the graphs $G^*$ and $H^*$ can be constructed in $O(|V_G|^3)$ time.
\end{theorem}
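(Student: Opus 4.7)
The plan is to walk through the construction phase by phase and bound each step, writing $n = |V_G|$ throughout. Before starting, I would verify that $\phiv$ is surjective (otherwise output ``not a conjugacy'' and halt), so from this point on $|V_H| \le n$ and all edge sets have size $O(n^2)$.

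\emph{SCC decomposition and short cycles.} Running Tarjan's algorithm on $G$ and $H$ in $O(n^2)$ time identifies all SCCs, and in particular the sink components $T_1, \ldots, T_m$ and source components $S_1, \ldots, S_\ell$ of $H$, together with their preimages $T_i' = \phiv^{-1}(T_i)$ and $S_j' = \phiv^{-1}(S_j)$ in $G$. For each sink $T$, I pick an arbitrary $v \in T$ and run a BFS from $v$ inside $T$; the first arc that returns to $v$ yields a cycle of length at most $|T|$. The BFS cost summed across all sinks is $\sum_T |T|^2 \le |V_H|^2 = O(n^2)$, and similarly for sources.

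\emph{Computing $V'$, the main obstacle.} The subtle ingredient is computing the set $V' \subseteq \phiv^{-1}(v)$ of vertices $v'$ admitting an infinite $w'$ with $\phiw(v' w') = v c^\infty$, since the definition refers to infinite walks. I would reformulate this as finite reachability in the product graph $P$ with vertex set $T' \times \{0, \ldots, |c|-1\}$ and an edge $(g, i) \to (g', (i+1) \bmod |c|)$ whenever $(g, g') \in E_G$, $\phiv(g) = c_i$, and $\phiv(g') = c_{(i+1) \bmod |c|}$. Infinite walks in $G$ above $v c^\infty$ starting at $v'$ correspond exactly to infinite walks in $P$ from $(v', 0)$, and such a walk exists if and only if $(v', 0)$ can reach some non-trivial SCC of $P$. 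A second call to Tarjan's on $P$ followed by a reverse reachability pass computes $V'$ in time linear in $|P|$, which is $O(|E_{T'}| \cdot |c|)$. Summing over sinks gives $\sum_T |E_{T'}| \cdot |c| \le |E_G| \cdot \max_T |T| = O(n^2) \cdot O(n) = O(n^3)$.

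\emph{Gluing everything together.} Once each $V'$ is in hand, adding the new states $t$ and $t'$ and their incident edges costs $O(n)$ per sink, hence $O(n^2)$ across all sinks. The symmetric procedure for source components is handled identically after reversing all edges in $G$ and $H$, applying the same routine, and reversing edges back; this also fits in $O(n^3)$. The final step---introducing the vertex $*$ together with its $O(\ell + m) = O(n)$ incident edges in each of $G^*$ and $H^*$---is immediate. Adding the contributions gives a total running time of $O(n^3) = O(|V_G|^3)$.
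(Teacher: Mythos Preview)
Your proof is correct and follows the same overall plan as the paper: decompose $H$ into sink and source components, pick a short cycle $c$ in each, compute the set $V'$ by reducing the infinite-walk condition to cycle-reachability in an auxiliary graph, and sum the per-component costs.  The one point of divergence is the auxiliary graph used to compute $V'$.  The paper exploits the fact that the \emph{shortest} cycle $c$ through $v$ is simple, so it is enough to work inside the subgraph $C'\subseteq T'$ on vertex set $\Phi^{-1}(V_C)$ with only those edges of $T'$ whose $\Phi$-image lies in $E_C$; it then runs BFS from each $v'\in\Phi^{-1}(v)$, for a cost of $O(|V_{T'}|^3)$ per component.  Your product graph $P = T'\times\{0,\ldots,|c|-1\}$ is a more general construction that does not need $c$ to be simple; when $c$ \emph{is} simple, the position index $i$ is determined by $\phiv(g)$, so the essential part of $P$ is isomorphic to the paper's $C'$.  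Both routes land at the same $O(|V_G|^3)$ bound, and your single Tarjan pass plus reverse reachability on $P$ is arguably tidier than the paper's repeated BFS.
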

\begin{proof}
  Let $T$ be an arbitrary sink component in $H$ and $T'$ be the subgraph $\Phi^{-1}(T)$ of $G$.
  We will show the corresponding sink vertices $t,t'$ can be added in $O(|V_{T'}|^3)$ time.
  Iterating over all sink components $T\in \mathcal{T}$ and source components $S\in \mathcal{S}$ will give an overall complexity of $O(\sum_{T'\in \mathcal{T}'}|V_{T'}|^3+\sum_{S'\in\mathcal{S}'}|V_{S'}|^3)=O(|V_G|^3)$ time.
  (Adding the vertex $*$ takes linear time.)

  Let $v$ be an arbitrary vertex of $T$, and let $c$ be the shortest cycle in $T$ through $v$, which can be computed using breadth-first search in $O(|V_{T}|+|E_T|)=O(|V_H|^2)=O(|V_G|^2)$ time.
  Note that $|c|\leq|T|$, so we have completed steps~1 and~2.
  Step~3 is constant time.
  The only nontrivial step that remains is step~4, the computation of the set $V' \subseteq V_{T'}$, from which steps~5 and~6 follow trivially in linear time.
  
  Let $C=(V_C,E_C)$ be the subgraph of $T$ corresponding to $c$, and let $C'=(V_{C'},E_{C'})$ be the subgraph of $T'$ which maps onto $C$ as follows: $V_{C'} = \Phi^{-1}(V_C)$, and $E_{C'} = \{(u',v') \in E_{T'} : (\Phi(u'),\Phi(v')) \in E_C\}$.
  The subgraph $C'$ can be constructed in $O(|V_{T'}|^2)$ time.
  Note that infinite walks in $C'$ starting from any $v'\in\Phi^{-1}(v)$ are precisely the walks in $T'$ that map onto $c^\infty$, and moreover, there is an infinite walk in $C'$ starting from $v'$ if and only if there is a path in $C'$ from $v'$ to a cycle in $C'$.
  We therefore define $V'\subseteq \Phi^{-1}(v) \subseteq V_{C'}$ to be the set of nodes $v'$ such that there is a path in $C'$ from $v'$ to a cycle in $C'$.
  To compute $V'$, we can simply run breadth-first search from each vertex in $\Phi^{-1}(v)$, in $O(|\Phi^{-1}(v)|\cdot(|V_{C'}|+|E_{C'}|))=O(|V_{T'}|^3)$ time.
\end{proof}

We have now seen an efficient procedure to embed a pair of reducible graphs into a pair of irreducible graphs, such that the original pair admits a 1-block conjugacy if and only if the embedded pair does.
Moreover, the embedded irreducible graphs have at most twice the number of vertices as the original graphs.
With this procedure in hand, we can extend our verification algorithm to the reducible case.

\begin{corollary}\label{cor:verify_conjugacy_reducible}
  Given vertex shifts $X_G,X_H$ and a $k$-block code $\Phi_\infty$ as $\phiv:V_G^k\to V_H$, deciding if $\Phi_\infty$ is a conjugacy can be determined in $O(|V_G|^{4k})$ time.
\end{corollary}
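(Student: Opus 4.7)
The plan is to chain three reductions established earlier in this section. First, I would pass to the $k$th higher block shift, replacing $\Phi_\infty$ with the $1$-block code $\Phi^{[k]}_\infty : X_{G^{[k]}} \to X_H$ on the recoded domain; as noted in the discussion preceding Theorem~\ref{thm:splitting->amalgamations} (and as already used in the proof of Corollary~\ref{cor:verify_conjugacy_irreducible}), $\Phi_\infty$ is a conjugacy if and only if $\Phi^{[k]}_\infty$ is. Constructing $G^{[k]}$ and the induced $1$-block map takes $O(|V_{G^{[k]}}|^3) = O(|V_G|^{3k})$ time, using the bound $|V_{G^{[k]}}| \leq |V_G|^k$.

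Second, I would apply Theorem~\ref{thm:construct_irreducible_supershit} to embed the (possibly reducible) graphs $G^{[k]}$ and $H$ into irreducible graphs $(G^{[k]})^*$ and $H^*$, simultaneously extending the map to a $1$-block code $(\Phi^{[k]})^*_\infty$ which, by Proposition~\ref{prop:add_a_vertex} and the proposition immediately following it, is a conjugacy if and only if $\Phi^{[k]}_\infty$ is. This embedding runs in $O(|V_{G^{[k]}}|^3) = O(|V_G|^{3k})$ time, and since each source or sink component contributes only a single additional vertex and exactly one $*$ vertex is introduced, the vertex count at most doubles, giving $|V_{(G^{[k]})^*}| = O(|V_G|^k)$.

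Third, having reduced to verifying a $1$-block code between irreducible vertex shifts, I would invoke Corollary~\ref{cor:verify_conjugacy_irreducible} directly, yielding a decision procedure running in $O(|V_{(G^{[k]})^*}|^4) = O(|V_G|^{4k})$ time, which dominates the cost of the previous two steps.

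There is no substantial technical obstacle remaining: the three ingredients have already been developed in the preceding subsections, and the work here lies only in bookkeeping the polynomial blowup in vertex count across each reduction and in confirming that conjugacy is preserved at every stage. If anything requires care, it is the very first step, where one must also construct the induced $1$-block map $\Phi^{[k]}$ on vertices of $G^{[k]}$ (i.e.\ on blocks in $\cB_k(X_G)$) within the stated time bound before the reducible-to-irreducible embedding can be applied.
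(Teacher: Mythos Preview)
Your proposal is correct and follows essentially the same three-step strategy as the paper: recode to the higher block shift, embed into irreducible graphs via Theorem~\ref{thm:construct_irreducible_supershit}, and then invoke Corollary~\ref{cor:verify_conjugacy_irreducible}. The paper's proof is slightly terser (it phrases the first step as ``it suffices to show the case $k=1$'' and separately notes that the irreducible case is already handled), but the logic and the bookkeeping of vertex counts are the same.
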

\begin{proof}
  If $G,H$ are irreducible, Corollary~\ref{cor:verify_conjugacy_irreducible} applies immediately.
  For the reducible case, as in Corollary~\ref{cor:verify_conjugacy_irreducible}, by passing to the $k$th higher block shift it suffices to show the case $k=1$.
  From Theorem~\ref{thm:construct_irreducible_supershit}, we can embed $G,H$ into the irreducible shifts $G^*,H^*$ in $O(|V_G|^3)$ time.
  Furthermore, $|V_{G^*}|<2|V_G|$, so $|V_{G^*}|=O(|V_G|)$.
  Then by Corollary~\ref{cor:verify_conjugacy_irreducible}, we can verify if $\Phi^*_\infty$ (and hence $\Phi_\infty$) is a conjugacy in $O(|V_{G^*}|^4)=O(|V_G|^4)$.
\end{proof}

\section{Deciding $k$-Block Conjugacy}
\label{sec:conjugacy}

We now turn to the question of deciding $k$-block conjugacy.
Specifically, we wish to understand the complexity of the problem \KBCv, which is to decide given directed graphs $G,H$ whether the vertex shifts $X_G,X_H$ are conjugate via a $k$-block code $\Phi_\infty:X_G\to X_H$.
Note that the description size of $\Phi$ is polynomial in $|V_G|$ and $|V_H|$, and thus from Corollary~\ref{cor:verify_conjugacy_reducible} we know that a potential $k$-block conjugacy can be verified in polynomial time; hence, \KBCv is in \NP.
We will show that \KBCv is \GI-hard for all $k$, where \GI is the class of problems with a polynomial-time Turing reduction to the Graph Isomorphism problem~\cite{kobler2012graph}.
(A graph isomorphism is bijection between the vertices of two graphs which preserves the edges/non-edge relation; the Graph Isomorphism problem is to decide if two given undirected graphs are isomorphic.)

\begin{definition}
  Given directed graphs $G,H$, the \emph{k-Block Conjugacy Problem}, denoted \KBCv, is to decide if there is a $k$-block conjugacy $\Phi_\infty:X_G\to X_H$ between the vertex shifts $X_G$ and $X_H$.
\end{definition}

To begin, we give the straightforward result that the case $k=1$ is \GI-hard, essentially because 1-block conjugacies between vertex shifts for equal sized graphs must be isomorphisms.

\begin{theorem}\label{thm:1block_v_GI-hard}
	The 1-Block Conjugacy Problem, \BCv{1}, is \GI-hard. 
\end{theorem}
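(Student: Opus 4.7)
The plan is to give a polynomial-time many-one reduction from the Graph Isomorphism problem to \BCv{1}. Given undirected simple graphs $G_1, G_2$, I build directed graphs $G_1', G_2'$ by replacing each undirected edge $\{u,v\}$ with the pair of directed edges $(u,v), (v,u)$ and then adding a self-loop at every vertex. The construction is clearly polynomial-time, and I claim $G_1 \cong G_2$ if and only if $X_{G_1'}$ and $X_{G_2'}$ are conjugate via a $1$-block code. The forward direction is immediate: any undirected graph isomorphism $\pi: V_{G_1} \to V_{G_2}$ is also an isomorphism of the directed graphs $G_1', G_2'$ (it respects the symmetric edge structure and the universal self-loops), so $\phiv := \pi$ extends to a $1$-block conjugacy $\Phi_\infty:X_{G_1'}\to X_{G_2'}$.

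For the reverse direction, suppose $\phiv: V_{G_1'} \to V_{G_2'}$ induces a $1$-block conjugacy. Since every vertex carries a self-loop, $C_1(G_i') = V_{G_i'}$. Because $\Phi_\infty$ is a conjugacy, it is a bijection on periodic points and hence $\phic$ is a bijection on $C_n(G_i')$ for each $n$; specializing to $n=1$ immediately forces $\phiv$ to be a bijection on vertices. Edge-preservation then follows from the induced bijection on $C_2$: the undirected-to-directed construction guarantees that each non-loop edge $(u,v) \in E_{G_1'}$ lies in a $2$-cycle $uv \in C_2(G_1')$, which under $\phic$ maps to a $2$-cycle $\phiv(u)\phiv(v) \in C_2(G_2')$, and since every pair of directed edges between distinct vertices in $G_2'$ corresponds to an undirected edge of $G_2$, this gives $\{\phiv(u),\phiv(v)\} \in E_{G_2}$. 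The converse containment follows by applying the same argument to $\phic^{-1}$: any $2$-cycle $\phiv(u)\phiv(v) \in C_2(G_2')$ has a unique preimage under $\phic$, and by bijectivity of $\phiv$ on vertices that preimage must be $uv$, forcing $\{u,v\} \in E_{G_1}$. Hence $\phiv$ is an isomorphism $G_1 \to G_2$.

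The proof is mostly bookkeeping, and the main conceptual move is to note that the universal self-loops act as markers that pin down $\phiv$ as a vertex bijection, after which the symmetric edge structure turns the bijection on $2$-cycles into edge-preservation in both directions. The only subtle point is that $G_1', G_2'$ need not be strongly connected (the shifts may be reducible), so Theorem~\ref{thm:conjugacy_iff_cycle_bijection} cannot be invoked as a black box; however, only the easy direction (conjugacy implies a cycle bijection) is needed here, and its short proof does not rely on irreducibility.
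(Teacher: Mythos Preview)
Your proof is correct and follows the same high-level strategy as the paper's---reduce from Graph Isomorphism by constructing directed graphs for which 1-block conjugacy coincides with isomorphism---but the technical execution differs. The paper works with strongly connected directed graphs on equal vertex sets (obtained from connected undirected graphs by symmetrizing edges) and argues that surjectivity of $\Phi_\infty$ forces $\Phi$ to surject onto $V_H$; combined with $|V_G|=|V_H|$, this makes $\Phi$ a vertex bijection, after which edge and non-edge preservation are checked directly from well-definedness of $\Phi_\infty$ and uniqueness of preimages. Your self-loop gadget instead forces $C_1(G_i')=V_{G_i'}$, so the vertex bijection drops out of the conjugacy's action on fixed points without any connectivity or cardinality hypothesis, and the $C_2$ argument handles both directions of edge preservation uniformly. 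The paper's route yields the slightly stronger byproduct that for \emph{any} two strongly connected graphs on the same number of vertices, 1-block conjugacy is exactly graph isomorphism (cf.~\cite[Ex.~2.2.14]{lind1999introduction}); your route is more self-contained, works even when the input graphs are disconnected, and sidesteps the need to invoke that \GI restricted to connected graphs remains \GI-hard.
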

\begin{proof}
  Given strongly connected graphs directed $G,H$ with $|V_G|=|V_H|$, we show that the shifts $X_G,X_H$ are conjugate via 1-block code if and only if the graphs are isomorphic
  (cf.~\cite[Ex.~2.2.14]{lind1999introduction}).
  The result then follows as graph isomorphism between strongly connected directed graphs is \GI-hard, by the usual reduction from the undirected case (replace each edge with two directed edges).

  First suppose $\Psi:V_G\to V_H$ is a graph isomorphism.
  As $\Psi(v_1v_2)$ is a legal word in $X_H$ for all words of length 2, by definition of a graph isomorphism, we have that $\Psi_\infty : X_G \to X_H$ is a valid 1-block code.
  Letting $\Phi = \Psi^{-1}:V_H\to V_G$, we have $\Psi_\infty(\Phi_\infty((x_i)_{i\in \Z}))=(\Psi(\Phi(x_i)))_{i\in\Z}=(x_i)_{i\in\Z}$ for all $x\in X_H$, and $\Phi_\infty(\Psi_\infty((x_i)_{i\in \Z}))=(\Phi(\Psi(x_i)))_{i\in\Z}=(x_i)_{i\in\Z}$ for all $x\in X_G$.
  Thus, $\Phi_\infty$ is the 2-sided inverse of $\Psi_\infty$, and $\Psi_\infty$ is a 1-block conjugacy.
  
  For the other direction, suppose $\Phi_\infty:X_G\to X_H$ is a 1-block conjugacy.
  Then $\{\Phi(v):v\in V_G\}$ must be exactly the set of words of length 1 in $X_H$, i.e., the vertices of $H$.
  Since $|V_G|=|V_H|$, $\Phi:V_G\to V_H$ is a bijection.
  Also, for any edge $(v_1,v_2)\in E_G$, we have $\Phi(v_1v_2)=\Phi(v_1)\Phi(v_2)$, so $(\Phi(v_1),\Phi(v_2))\in E_H$ as $\Phi_\infty$ is a well-defined sliding block code.
  Even more, consider any pair $v_3,v_4$ of vertices in $V_G$ such that $(v_3,v_4)\notin E_G$.
  Noting that $\Phi(v_3)\Phi(v_4)$ has a unique preimage as $\Phi$ is a bijection and $\Phi_\infty$ is surjective, we have $(\Phi(v_3),\Phi(v_4))\notin E_H$.
  Thus $\Phi:V_G\to V_H$ is a bijection on vertices which preserves the edge relationship; that is, $\Phi$ is a graph isomorphism.
\end{proof}

Next, we will show \KBCv is \GI-hard for all $k$, by reduction to the 1-block case.
Specifically, given directed graphs $G,H$, we will construct graphs $G',H'$ such that there exists a 1-block conjugacy $\Phi_\infty:X_G\to X_H$ if and only if there exists a $k$-block conjugacy $\Phi'_\infty:X_{G'}\to X_{H'}$ exists.
To form $G'$, we replace every vertex $v\in V_G$ with a path $v_{\text{in}}v_1v_2\cdots v_{k-1}$ followed by the diamond with sides $v_{k-1}v_k^tv_{\text{out}}$ and $v_{k-1}v_k^bv_{\text{out}}$ (Figure~\ref{fig:vertex_gadget}a).
To form $H'$, we replace every vertex $u\in V_H$ with two parallel paths $u_{\text{in}}u_1^tu_2^t\cdots u_k^tu_{\text{out}}$ and $u_{\text{in}}u_1^bu_2^b\cdots u_k^bu_{\text{out}}$ (Figure~\ref{fig:vertex_gadget}b).

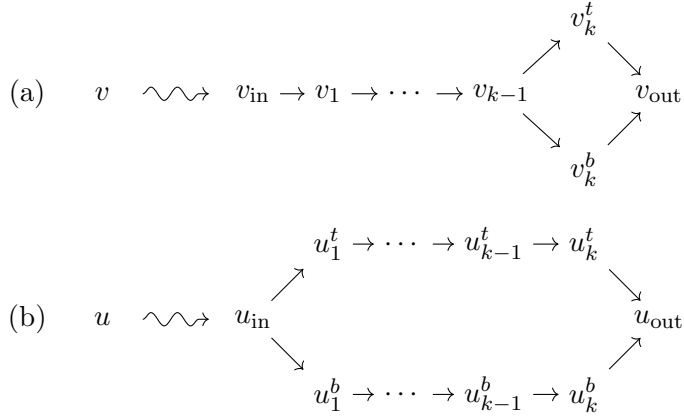
\begin{figure}
	\centering
  \begin{tikzpicture}[scale=1, decoration=snake]
  \node at (0,0) {(a)};
  \node (a) at (1,0) {$v$};
  \node (ain) at (3,0) {$v_{\text{in}}$};
  \node (a1) at (4,0) {$v_1$};
  \node (adots) at (5.05,0) {$\cdots$};
  \node (an1) at (6.3,0) {$v_{k-1}$};
  \node (ant) at (7.4,1) {$v_k^t$};
  \node (anb) at (7.4,-1) {$v_k^b$};
  \node (aout) at (8.4,0) {$v_{\text{out}}$};
  
  \draw[->] (ain) edge (a1) (a1) edge (adots) (adots) edge (an1) (an1) edge (ant) (an1) edge (anb) (ant) edge (aout) (anb) edge (aout);
  
  \node at (0,-3) {(b)};
  \node (b) at (1,-3) {$u$};
  \node (bin) at (3,-3) {$u_{\text{in}}$};
  \node (b1t) at (4,-2) {$u_1^t$};
  \node (b1b) at (4,-4) {$u_1^b$};
  \node (bdotst) at (5,-2) {$\cdots$};
  \node (bdotsb) at (5,-4) {$\cdots$};
  \node (bn1t) at (6.2,-2) {$u^t_{k-1}$};
  \node (bn1b) at (6.2,-4) {$u^b_{k-1}$};
  \node (bnt) at (7.4,-2) {$u_k^t$};
  \node (bnb) at (7.4,-4) {$u_k^b$};
  \node (bout) at (8.4,-3) {$u_{\text{out}}$};
  
  \draw[->] (bin) edge (b1t) (b1t) edge (bdotst) (bdotst) edge (bn1t) (bn1t) edge (bnt) (bnt) edge (bout) (bin) edge (b1b) (b1b) edge (bdotsb) (bdotsb) edge (bn1b) (bn1b) edge (bnb) (bnb) edge (bout);
  
  \node (swarrow) at (1.4,-3) {};
  \node (searrow) at (2.55,-3) {};
  \draw[->] (swarrow) edge[decorate] (searrow);
  
  \node (nwarrow) at (1.4,0) {};
  \node (nearrow) at (2.55,0) {};
  \draw[->] (nwarrow) edge[decorate] (nearrow);
  \end{tikzpicture}
	\caption{The vertex gadgets for (a) each vertex $v$ in $G$, and (b) each vertex $u$ in $H$.}
	\label{fig:vertex_gadget}
\end{figure}

\begin{lemma}\label{lemma:k-block_conj_format}
  Given directed graphs $G,H$, let $G',H'$ be constructed as above.
  If there exists a $k$-block conjugacy $\Phi'_\infty:X_{G'}\to X_{H'}$, then for all $v\in V_G$ there exists $u\in V_H$ such that
  $\Phi'(v_{\text{in}}v_1\cdots v_{k-1})=u_{\text{in}}$.
\end{lemma}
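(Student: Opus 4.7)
The plan is to construct two bi-infinite points in $X_{G'}$ differing at exactly one coordinate inside $v$'s gadget, apply $\Phi'_\infty$ to both, and observe that their images trace out a diamond in $H'$ whose length is sharply constrained. A length classification of diamonds in $H'$ then pins $\Phi'(b)$, where $b := v_{\text{in}} v_1 \cdots v_{k-1}$, down to a vertex of the form $u_{\text{in}}$.

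Assume $b$ is a block of $X_{G'}$ (otherwise the statement is vacuous for this $v$), and write $\Phi'$ with memory $m$ and anticipation $a$. Fix $x \in X_{G'}$ with $x_{[j-m, j+a]} = b$, and form $x^t, x^b \in X_{G'}$ agreeing with $x$ outside coordinate $j+a+1$, where they take values $v_k^t$ and $v_k^b$ respectively and continue through $v_{\text{out}}$ into a common suffix. Set $y^t := \Phi'_\infty(x^t)$ and $y^b := \Phi'_\infty(x^b)$. The windows of $x^t$ and $x^b$ at coordinate $j'$ coincide exactly when $j' \notin \{j+1, \ldots, j+k\}$, so $y^t$ and $y^b$ agree outside that interval. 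Since $\Phi'_\infty$ is injective, $y^t \ne y^b$, so they disagree at some coordinates in $\{j+1, \ldots, j+k\}$; let $j+l^*$ and $j+l_*$ be the first and last such. Then $y^t, y^b$ coincide at $y_{j+l^*-1}$ and $y_{j+l_*+1}$ but take distinct first steps out of $y_{j+l^*-1}$, exhibiting two walks in $H'$ from $y_{j+l^*-1}$ to $y_{j+l_*+1}$ of length $L := l_* - l^* + 2 \in \{2, \ldots, k+1\}$ that diverge at step one.

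The main work, and the concluding step, is classifying which vertices of $H'$ can start such a pair of walks. The only vertices of $H'$ with out-degree at least two are $u_{\text{in}}$ (out-neighbors $u_1^t, u_1^b$) and $u_{\text{out}}$ (out-neighbors indexed by successors of $u$ in $H$); every other vertex lies on a deterministic segment of its gadget. From $u_{\text{in}}$, the two branches are forced through deterministic paths that rejoin only at $u_{\text{out}}$, yielding a diamond of length exactly $k+1$ and no shorter. From $u_{\text{out}}$, two walks diverging at step one must enter the gadgets of distinct successors $w, w'$ of $u$, and since these gadgets are vertex-disjoint, the walks cannot rejoin until both have reached their respective $w_{\text{out}}, w'_{\text{out}}$ and stepped to a common successor in $H$---a total of at least $k+3$ edges. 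Hence the available diamond lengths in $H'$ lie in $\{k+1\} \cup \{k+3, k+4, \ldots\}$, and the constraint $L \le k+1$ forces $L = k+1$, whence $l^* = 1$ and $y_j = y_{j+l^*-1} = u_{\text{in}}$ for some $u \in V_H$. Since $y_j = \Phi'(b)$, the lemma follows.
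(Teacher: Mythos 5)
Your proof is correct, and it reaches the conclusion by a noticeably different route than the paper. The paper argues by contradiction with a case analysis on the value of $\Phi'(v_{\text{in}}v_1\cdots v_{k-1})$: assuming the image is some $u_i^t$, $u_i^b$, or $u_{\text{out}}$, it tracks the forced images of successive windows and exhibits an explicit pair of words that $\Phi'$ maps together, i.e.\ a collapsed diamond, and then invokes Lemma~\ref{lemma:diamond_lemma}. You instead run the argument in the forward direction: injectivity forces $\Phi'_\infty(x^t)\neq\Phi'_\infty(x^b)$, the disagreement is confined to a window of $k$ coordinates, and the divergence must begin at a vertex of $H'$ of out-degree at least two. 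Your key structural observation --- that the ``diamonds'' of $H'$ starting at $u_{\text{in}}$ have length exactly $k+1$ while those starting at $u_{\text{out}}$ have length at least $k+3$ --- combined with the bound $L\leq k+1$ then pins the divergence point to $u_{\text{in}}$ and forces $l^*=1$, giving the exact alignment $\Phi'(b)=u_{\text{in}}$ rather than merely ruling out the bad cases. What the paper's version buys is brevity per case and direct reuse of its diamond lemma (at the cost of a ``without loss of generality'' and a somewhat informal count of words between gadgets); what yours buys is the elimination of the case split, an isolated and reusable classification of the branching structure of $H'$, and explicit control of where in the window the images diverge. The only caveats, both minor and handled adequately, are your opening assumption that $v_{\text{in}}v_1\cdots v_{k-1}$ is actually a block of $X_{G'}$ (the paper makes the analogous essentiality assumption implicitly when it picks edges into and out of $v$) and the implicit use of the \emph{first} rejoining point in the length classification, which your phrasing ``cannot rejoin until'' already covers.
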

\begin{proof}
  Suppose for a contradiction that $\Phi'_\infty$ is a $k$-block code such that for some $v\in V_G$
  we have $\Phi'(v_{\text{in}}v_1\cdots v_{k-1})\neq u_{\text{in}}$ for all $u\in V_H$.
  We break the argument into two cases.

  First, suppose $\Phi'(v_{\text{in}}v_1\cdots v_{k-1})=u_i^t$.
  (The case $u_i^b$ is identical.)
  Since the shift map commutes with sliding block codes, we must have $\Phi'(v_1\cdots v_{k-1}v_k^t)=\Phi'(v_1\cdots v_{k-1}v_k^b) = z$ where $z\in\{u_{i+1}^t,u_{\text{out}}\}$.
  Picking any edge $(v,\hat{v})\in E_G$ and continuing to slide the block window, we must have $\Phi'({\hat{v}}_{\text{in}} {\hat{v}}_1\cdots {\hat{v}}_{k-1})\in\{{\hat{u}}_i^t,\hat{u}_i^b\}$ for some $\hat u\in V_H$.
  Without loss of generality, assume $\Phi'({\hat{v}}_{\text{in}} {\hat{v}}_1\cdots {\hat{v}}_{k-1})=\hat{u}_i^t$.
  Furthermore, since there is only one word in $H'$ between $u_i^t$ and ${\hat{u}}_i^t$ of proper length but two words in $G'$ between $\hat{v}_\text{in}$ and $v_\text{in}$, we have \[\Phi'(v_{\text{in}}\cdots v_k^tv_{\text{out}}\hat{v}_{\text{in}}\cdots \hat{v}_{k-1})=u_i^t\cdots u_k^tu_{\text{out}}\hat{u}_{\text{in}}{\hat{u}}_1^t \cdots {\hat{u}}_i^t=\Phi'(v_{\text{in}}\cdots v_k^bv_{\text{out}}\hat{v}_{\text{in}}\cdots \hat{v}_{k-1}).\]
  That is, $\Phi'$ collapses a diamond, so by Lemma~\ref{lemma:diamond_lemma}, $\Phi'$ is not a conjugacy.
	
  Second, suppose $\Phi'(v_{\text{in}}v_1\cdots v_{k-1})=u_{\text{out}}$.
  Pick any edge $(\hat{v},v)\in E_G$.
  Then without loss of generality, $\Phi'(\hat{v}_{\text{out}}v_{\text{in}}\cdots v_{k-2})=u_k^t$. 
  Continuing to slide the block window, we have $\Phi'(\hat{v}_{\text{in}}\cdots \hat{v}_{k-1})=\hat{u}_{\text{out}}$ for some $\hat u\in V_H$.
  Again, there are two words in $G'$ between $\hat{v}_{\text{in}}$ and $v_{\text{in}}$ but only one word in $H'$ between $\hat{u}_\text{out}$ and $u_\text{out}$ which passes through $u_k^t$.
  Thus, we have \[\Phi'(\hat{v}_{\text{in}}\cdots {\hat{v}}_k^t\hat{v}_{\text{out}}v_{\text{in}}\cdots v_{k-1})=\hat{u}_{\text{out}}u_{\text{in}}\cdots u_k^t u_{\text{out}}=\Phi'(\hat{v}_{\text{in}}\cdots {\hat{v}}_k^b\hat{v}_{\text{out}}v_{\text{in}}\cdots v_{k-1}),\] so $\Phi'$ again collapses a diamond, and by Lemma~\ref{lemma:diamond_lemma}, $\Phi'$ is not a conjugacy.
\end{proof}

\newcommand{\str}[2]{#1^{\langle #2\rangle}}
We now show that graphs $G,H$ admit a 1-block conjugacy if and only if the graphs $G',H'$ constructed as above admit a $k$-block conjugacy.
To do this, we first introduce a natural operation on shift spaces, which ``stretches'' each point by a factor $N$.
Given alphabet $\cA$, and any point $p=\cdots v.v'\cdots\in \cA^\Z$, we write $\str{p}{N} = \cdots v\cdots v.v'\cdots v'\cdots$ to be the point $p$ with each symbol repeated $N$ times.
Given a shift $X$ over alphabet $\cA$, we define the shift space $\str{X}{N} = \{\sigma^i(\str{p}{N}):p\in X, i\in\Z\}$ where $\sigma$ is the shift map.
In particular, $\str{X}{N}$ contains all shifts of the $N$th expansion of points in $X$.
While in general $\str{X}{N}$ is not a vertex shift when $N>1$, it is still structured enough that the following lemma is immediate.

\begin{lemma}\label{lemma:pass_to_higher_expansion}
  Given shifts $X,Y$, there exists a 1-block conjugacy $\Phi_\infty:X\to Y$ if and only if there exists a 1-block conjugacy $\str{\Phi}{N}_\infty:\str{X}{N}\to \str{Y}{N}$, where $\Phi=\str{\Phi}{N}$ as block maps.
\end{lemma}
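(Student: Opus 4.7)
The plan is to exploit the fact that $\str{X}{N}$ uses the same alphabet $\cA$ as $X$ and that $\str{\Phi}{N}_\infty$ acts by applying $\Phi$ symbol-wise, which gives the basic identity $\str{\Phi}{N}_\infty(\sigma^i(\str{p}{N})) = \sigma^i(\str{\Phi_\infty(p)}{N})$ for all $p \in \cA^\Z$ and $i \in \Z$. This reduces every question to comparing ``stretched'' points, for which I would first prove a short alignment lemma: if $\sigma^i(\str{p'}{N}) = \str{q}{N}$ with $0 \leq i < N$, then either $i = 0$ and $q = p'$, or $p'$ and $q$ are both the constant sequence $c^\infty$ for some $c \in \cA$. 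The lemma follows from fixing a block $\{Nk,\ldots,Nk+N-1\}$: the right side is constantly $q_k$, while when $0 < i < N$ the left side takes both values $p'_k$ and $p'_{k+1}$, forcing $p'$ (and hence $q$) to be constant.

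For the forward direction, assume $\Phi_\infty \colon X \to Y$ is a $1$-block conjugacy. The identity above shows that $\str{\Phi}{N}_\infty$ is a well-defined $1$-block code $\str{X}{N} \to \str{Y}{N}$, and surjectivity is immediate: for $r = \sigma^i(\str{r'}{N}) \in \str{Y}{N}$, lift $p := \Phi_\infty^{-1}(r')$ to $q := \sigma^i(\str{p}{N})$. The main obstacle is injectivity. Given $\str{\Phi}{N}_\infty(q_1) = \str{\Phi}{N}_\infty(q_2)$, write $q_j = \sigma^{i_j}(\str{p_j}{N})$ with $0 \leq i_1 \leq i_2 < N$; the equal-phase case $i_1 = i_2$ reduces immediately to injectivity of $\Phi_\infty$. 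When $i_1 < i_2$, the alignment lemma forces $\Phi_\infty(p_1) = \Phi_\infty(p_2)$ to be a constant point $c^\infty$, and injectivity of $\Phi_\infty$ then gives $p_1 = p_2$. Moreover, shift-equivariance combined with injectivity forces the preimage of $c^\infty$ itself to be constant (since $\Phi_\infty(\sigma p_1) = \Phi_\infty(p_1)$ implies $\sigma p_1 = p_1$), so $\str{p_1}{N}$ is shift-invariant and hence $q_1 = q_2$.

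For the reverse direction, assume $\str{\Phi}{N}_\infty \colon \str{X}{N} \to \str{Y}{N}$ is a $1$-block conjugacy. Well-definedness of $\Phi_\infty \colon X \to Y$ follows by applying the alignment lemma to $\str{\Phi_\infty(p)}{N} = \str{\Phi}{N}_\infty(\str{p}{N}) \in \str{Y}{N}$, which forces $\Phi_\infty(p) \in Y$. Injectivity is immediate since $\Phi_\infty(p_1) = \Phi_\infty(p_2)$ yields $\str{\Phi}{N}_\infty(\str{p_1}{N}) = \str{\Phi}{N}_\infty(\str{p_2}{N})$, whence $\str{p_1}{N} = \str{p_2}{N}$ and so $p_1 = p_2$. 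For surjectivity, given $r \in Y$ pull back $\str{r}{N} \in \str{Y}{N}$ to some $q = \sigma^i(\str{p}{N}) \in \str{X}{N}$, and apply the alignment lemma to $\sigma^i(\str{\Phi_\infty(p)}{N}) = \str{r}{N}$ to recover $\Phi_\infty(p) = r$.
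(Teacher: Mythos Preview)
Your proof is correct. The paper does not actually prove this lemma; it merely declares it ``immediate'' immediately before the statement and gives no argument. Your write-up supplies exactly the details the paper skips.

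The substantive content you add is the alignment lemma (if $\sigma^i(\str{p'}{N}) = \str{q}{N}$ with $0 < i < N$ then both sequences are constant) and its use in the different-phase case of injectivity in the forward direction. That case is the only place where something nonobvious happens: one must rule out two genuinely different shifts $\sigma^{i_1}(\str{p_1}{N})$ and $\sigma^{i_2}(\str{p_2}{N})$ with $i_1 \neq i_2$ having the same image, and your observation that injectivity of $\Phi_\infty$ forces the preimage of a fixed point of $\sigma$ to itself be a fixed point of $\sigma$ is the clean way to close it. The reverse direction is handled correctly as well, including the constant-sequence branch of the alignment lemma in the well-definedness and surjectivity arguments. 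There is nothing to compare against in the paper beyond the word ``immediate,'' so your argument simply stands as the missing proof.
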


To make use of this definition and lemma, we will project points in $X_{G'},X_{H'}$ to $\str{X_G}{k+2},\str{X_H}{k+2}$ by simply erasing the subscript and superscript information.
Formally, we define the 1-block map $\Psi^G:V_{G'}\to V_G$ by $\Psi^G(u)=v\text{ for }u\in \{v_\text{in},v_1,\ldots, v_{k-1},v_k^t, v_k^b, v_\text{out}\}$, and let $\pi^G = \Psi^G_\infty:X_{G'}\to \str{X_{G}}{k+2}$.
We define $\Psi^H,\pi^H$ similarly.
Letting $S_p^G := {\left(\pi^G\right)}^{-1}(p) \subseteq X_{G'}$, we have that $\{S_p^G : p\in \str{X_G}{k+2}\}$ is a partition of the points in $X_{G'}$.
(Similarly for $S_q^H$ and $X_{H'}$.)

\begin{theorem}\label{thm:1-block_iff_k-block}
  Given graphs $G,H$, construct $G',H'$ as above. Then there exists a 1-block conjugacy $\Phi_\infty:X_G\to X_H$ if and only if there exists a $k$-block conjugacy $\Phi':X_{G'}\to X_{H'}$.
\end{theorem}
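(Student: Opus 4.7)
The plan is to view $X_{G'}$ and $X_{H'}$ as ``top/bottom'' extensions of the stretched shifts $\str{X_G}{k+2}$ and $\str{X_H}{k+2}$: the projections $\pi^G, \pi^H$ defined just before the theorem encode exactly this extension, and each gadget in $G'$ or $H'$ carries a single top/bottom bit over the underlying stretched vertex. By Lemma~\ref{lemma:pass_to_higher_expansion}, a 1-block conjugacy $X_G \to X_H$ is equivalent to a 1-block conjugacy $\str{X_G}{k+2} \to \str{X_H}{k+2}$, so the task reduces to matching the base conjugacy with a $k$-block conjugacy on the extensions through a bit-preserving fiberwise bijection.

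For the forward direction, assume $\Phi \colon V_G \to V_H$ induces a 1-block conjugacy. I would define the $k$-block map $\Phi'$ on each legal $k$-block $w$ in $X_{G'}$ according to the starting phase $j \in \{0, 1, \ldots, k+1\}$ of $w$ within some $v$-gadget and the top/bottom bit visible in $w$. Concretely, the block $v_{\text{in}} v_1 \cdots v_{k-1}$ (phase $0$) maps to $u_{\text{in}}$; a block at phase $j \in \{1, \ldots, k\}$ necessarily contains $v_k^t$ or $v_k^b$ and maps to $u_j^t$ or $u_j^b$ respectively, where $u = \Phi(v)$; and a block at phase $k+1$ starts at $v_{\text{out}}$ and maps to $u_{\text{out}}$ (no bit needed, as $u_{\text{out}}$ has no t/b variant). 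Direct inspection shows that consecutive outputs along any walk in $X_{G'}$ form a legal walk in $X_{H'}$ through the $\Phi(v)$-gadgets and that the square $\pi^H \circ \Phi'_\infty = \str{\Phi}{k+2}_\infty \circ \pi^G$ commutes, with $\Phi'_\infty$ acting fiberwise as the identity on top/bottom bit sequences. Bijectivity of $\str{\Phi}{k+2}_\infty$ (via Lemma~\ref{lemma:pass_to_higher_expansion}) together with fiberwise bijectivity then forces $\Phi'_\infty$ to be a bijection, hence a conjugacy.

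For the reverse direction, assume $\Phi'_\infty \colon X_{G'} \to X_{H'}$ is a $k$-block conjugacy. Lemma~\ref{lemma:k-block_conj_format} provides, for each $v \in V_G$, a unique $u \in V_H$ with $\Phi'(v_{\text{in}} v_1 \cdots v_{k-1}) = u_{\text{in}}$; set $\Phi(v) := u$. For preservation of edges, given $(v, \hat v) \in E_G$, slide the $k$-block window along $v_{\text{in}} v_1 \cdots v_{k-1} v_k^t v_{\text{out}} \hat v_{\text{in}} \cdots \hat v_{k-1}$; applying Lemma~\ref{lemma:k-block_conj_format} at both ends forces the $\Phi'_\infty$-image to traverse the $\Phi(v)$-gadget, cross to $\Phi(v)_{\text{out}}$, and enter $\Phi(\hat v)_{\text{in}}$, which requires $(\Phi(v), \Phi(\hat v)) \in E_H$. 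Conjugacy of $\Phi_\infty$ then follows by chasing $\pi^G, \pi^H$: any two distinct points in $X_G$ lift to distinct points in $X_{G'}$ sharing a fixed (say, all-top) bit sequence, whose $\Phi'_\infty$-images project via $\pi^H$ to distinct points in $X_H$; and any target point in $X_H$ lifts to $X_{H'}$, pulls back by surjectivity of $\Phi'_\infty$ to $X_{G'}$, and projects to a preimage in $X_G$.

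The main obstacle is the explicit phase-based case analysis in the forward direction: one must verify that the proposed $\Phi'$ is well-defined and consistent on all overlapping $k$-block windows, especially those straddling a gadget boundary or containing a diamond vertex $v_k^{t/b}$, and that the fiberwise action on bits is indeed a bijection. Once this construction is in place, bijectivity and the commuting square follow by routine inspection, and the theorem reduces to a diagram-chase using $\pi^G, \pi^H$ together with Lemma~\ref{lemma:pass_to_higher_expansion} and Lemma~\ref{lemma:k-block_conj_format}.
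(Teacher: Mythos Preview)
Your proposal is correct and follows essentially the same approach as the paper: both directions use the projections $\pi^G,\pi^H$ to the stretched shifts, Lemma~\ref{lemma:pass_to_higher_expansion}, and Lemma~\ref{lemma:k-block_conj_format}, with the forward $k$-block map defined exactly by the phase-and-bit rule you describe and bijectivity argued fiberwise over the partitions $\{S^G_p\},\{S^H_q\}$. The only point to tighten is your reverse-direction injectivity: ``lift, apply $\Phi'_\infty$, project'' does not by itself give distinct projections, so you must first invoke Lemma~\ref{lemma:k-block_conj_format} (slid along the whole point) to establish $\Phi'_\infty(S^G_p)\subseteq S^H_{q}$ for a well-defined $q$, after which disjointness of the images of distinct fibers (from injectivity of $\Phi'_\infty$) forces $q_1\neq q_2$---this is exactly the paper's argument.
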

\begin{proof}

  $(\Rightarrow)$
  Suppose there exists a 1-block conjugacy $\Phi_\infty:X_G\to X_H$. By Lemma~\ref{lemma:pass_to_higher_expansion}, there is a 1-block conjugacy $\str{\Phi}{k+2}_\infty:\str{X_G}{k+2}\to \str{X_H}{k+2}$.
  Define the $k$-block code $\Phi'_\infty:X_{G'}\to X_{H'}$ with no memory by
  \begin{itemize}\setlength{\itemsep}{0pt}
  \item $\Phi'(v_{\text{in}}\cdots)=\Phi(v)_{\text{in}}$
  \item $\Phi'(v_i\cdots v_k^t \cdots)=\Phi(v)_i^t$, $i \in \{1,\ldots,k\}$
  \item $\Phi'(v_i\cdots v_k^b \cdots)=\Phi(v)_i^b$, $i \in \{1,\ldots,k\}$
  \item $\Phi'(v_{\text{out}}\cdots)=\Phi(v)_{\text{out}}$
  \end{itemize}
  To show that $\Phi'_\infty$ is a bijection, we will show that for any $p\in \str{X_G}{k+2}$ and $q\in \str{X_H}{k+2}$ with $\str{\Phi}{k+2}_\infty(p)=q$, the map $\Phi'_\infty:S^G_p\to S^H_q$ is a bijection.
  The result then follows because $\str{\Phi}{k+2}_\infty$ is a bijection between $\str{X_G}{k+2}$ and $\str{X_H}{k+2}$, and the sets $\{S^G_p:p\in\str{X_G}{k+2}\},\{S^H_q:q\in\str{X_H}{k+2}\}$ partition $X_{G'},X_{H'}$.

  We first claim that $\Phi'_\infty(S^G_p)\subseteq S^H_q$, which is to say, for every $p'\in X_{G'}$ such that $\pi^G(p') = p$, we have $\pi^H(\Phi'_\infty(p')) = q$.
  To see this, note that by construction of $\Phi'$, for all $p'\in X_{G'}$ and all $i\in\Z$, we have $\Psi^H(\Phi'_\infty(p')_i) = \Phi(\Psi^G(p'_i)) = \str{\Phi}{k+2}(\Psi^G(p'_i))$.
  The condition $\pi^G(p') = \Psi^G_\infty(p') = p$ implies $\Psi^G(p'_i) = p_i$ for all $i\in\Z$.
  Combining the above with the observation that $\str{\Phi}{k+2}(p_i) = q_i$ gives $\Psi^H(\Phi'_\infty(p')_i) = q_i$, which implies the claim.
  
  To see that $\Phi'_\infty$ is injective on $S^G_p$, consider distinct points $p',p''\in S^G_p$ which differ at index $i$.
  Since $\pi^G(p')=\pi^G(p'')$, we can assume without loss of generality that $p'_i=v_k^t$ and $p''_i=v_k^b$.
  Then \[\Phi'(p'_{[i-k,i]})=\Phi'(v_1\cdots v_{k-1}v_k^t)=\Phi(v)_1^t\neq\Phi(v)_1^b=\Phi'(v_1\cdots v_{k-1}v_k^b)=\Phi'(p''_{[i-k,i]}),\] so $\Phi'_\infty(p')\neq \Phi'_\infty(p'')$.
  
  $(\Leftarrow)$
  Suppose there exists a $k$-block conjugacy $\Phi'_\infty: X_{G'}\to X_{H'}$.
  Without loss of generality, assume $\Phi'_\infty$ has no memory.
  By Lemma~\ref{lemma:k-block_conj_format}, for every $v\in V_G$, there exists $u\in V_H$ such that $\Phi'(v_{\text{in}}\cdots)=u_{\text{in}}$.
  Define the 1-block code $\Phi_\infty:X_G\to X_H$ by $\Phi(v)=\Psi^H(\Phi'(v_{\text{in}}v_1\cdots v_{k-1}))$.
  We claim $\Phi_\infty$ is a conjugacy.
  To show this, we instead will show $\str{\Phi_\infty}{k+2}$ defined by the same block map is a conjugacy.
  To see $\str{\Phi_\infty}{k+2}$ is surjective, consider any $q\in \str{X_H}{k+2}$.
  Picking any $q'\in S^H_{q}$, set $p'={\Phi'}_\infty^{-1}(q')$ and $p=\pi^G(p')$.
  We will now show $\str{\Phi_\infty}{k+2}(p)=q$, so the diagram in Figure~\ref{fig:commutativediagram_p-q-p'-q'} is commutative and $\Phi_\infty$ is surjective.
  For all $i\in\Z$,
  \begin{figure}
    \centering
    \begin{tabular}{ccc}
    \begin{tikzcd}
    X_{G'}\arrow{r}{\Phi'_\infty}\ar{d}[swap]{\pi^G=\Psi^G_\infty} & X_{H'}\ar{d}{\pi^H=\Psi^H_\infty} \\
    \str{X_G}{k+2} \arrow{r}{\str{\Phi_\infty}{k+2}} & \str{X_H}{k+2} 
    \end{tikzcd}
    
    & &
    
    \begin{tikzcd}
    p' \arrow[mapsto]{r}{\Phi'_\infty} \arrow[mapsto]{d}[swap]{\pi^G} & q' \arrow[mapsto]{d}{\pi^H} \\
    p \arrow[mapsto]{r}{\str{\Phi_\infty}{k+2}} & q
    \end{tikzcd}
    \end{tabular} 
    \caption{Given $\Phi'$ or $\Phi$, one can construct the other such that this diagram commutes.}
    \label{fig:commutativediagram_p-q-p'-q'}
  \end{figure}
  \begin{equation}\label{eq:1-block-code-definition}
    \Phi(p_i)=\Psi^H(\Phi'((p_i)_\text{in}\cdots (p_i)_{k-1}))=\Psi^H((q_i)_\text{in})=q_i.
  \end{equation}
  
  To see $\str{\Phi_\infty}{k+2}$ is injective, consider distinct $p_1,p_2\in \str{X_G}{k+2}$.
  Then $S^G_{p_1}\neq S^G_{p_2}$.
  Since $\Phi'_\infty$ is a conjugacy, $\Phi'_\infty(S^G_{p_1})\cap\Phi'_\infty(S^G_{p_2})=\emptyset$.
  By Lemma~\ref{lemma:k-block_conj_format}, there exist $q_1,q_2\in \str{X_H}{k+2}$ such that $S^H_{q_1}=\Phi'_\infty(S^G_{p_1})$ and $S^H_{q_2}=\Phi'_\infty(S^G_{p_2})$.
  By the construction of $\Phi$ (and shown in~(\ref{eq:1-block-code-definition})), $\str{\Phi_\infty}{k+2}(p_1)=q_1\neq q_2=\str{\Phi_\infty}{k+2}(p_2)$, so $\str{\Phi_\infty}{k+2}$ is injective.
\end{proof}

The construction in Theorem~\ref{thm:1-block_iff_k-block} gives a polynomial-time reduction from \BCv{1} to \KBCv for all $k$.
(The same construction could plausibly give a reduction from \BCv{m} to \BCv{\ell} where $\ell = (m-1)(k+2)+k$, though if true the proof would be much more involved.)
Combining this reduction with Theorem~\ref{thm:1block_v_GI-hard} therefore gives \GI-hardness for all $k$.

\begin{corollary}\label{cor:KBC_GI-hard}
  \KBCv is \GI-hard for all $k$.
\end{corollary}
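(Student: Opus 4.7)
The plan is to chain together the two main results from this section: Theorem~\ref{thm:1block_v_GI-hard}, which establishes \GI-hardness of \BCv{1}, and Theorem~\ref{thm:1-block_iff_k-block}, which equates the existence of a 1-block conjugacy between $X_G,X_H$ with the existence of a $k$-block conjugacy between $X_{G'},X_{H'}$. Since \GI-hardness is closed under polynomial-time Turing (and in particular many-one) reductions, it suffices to exhibit a polynomial-time reduction from \BCv{1} to \KBCv for each fixed $k$.

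First I would verify that the gadget construction $(G,H)\mapsto (G',H')$ from Section~4 can be carried out in polynomial time: each vertex of $G$ is replaced by a gadget on $k+2$ vertices (an in-node, $k-1$ path vertices, two parallel $v_k^t,v_k^b$ vertices, and an out-node), and similarly each vertex of $H$ is replaced by a gadget on $2k+2$ vertices. The edges between gadgets are determined by the original edge relation in the obvious way, and for fixed $k$ the whole construction takes time $O\bigl((|V_G|+|V_H|)\cdot k + |E_G|+|E_H|\bigr)$, which is polynomial in the input size.

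Next I would observe that Theorem~\ref{thm:1-block_iff_k-block} gives the required biconditional: $X_G$ and $X_H$ admit a 1-block conjugacy if and only if $X_{G'}$ and $X_{H'}$ admit a $k$-block conjugacy. Therefore the map $(G,H)\mapsto(G',H')$ is a polynomial-time many-one reduction from \BCv{1} to \KBCv. Composing this reduction with the \GI-hardness of \BCv{1} from Theorem~\ref{thm:1block_v_GI-hard} yields \GI-hardness of \KBCv for every $k$.

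There is essentially no obstacle left to overcome, since all the technical content is already contained in Theorems~\ref{thm:1block_v_GI-hard} and~\ref{thm:1-block_iff_k-block}. The only point to be careful about is that Theorem~\ref{thm:1block_v_GI-hard}'s reduction produces strongly connected directed graphs of equal size, and that the gadget construction preserves enough of this structure for the composed reduction to remain a valid instance of \KBCv; this is immediate from inspection of the gadgets in Figure~\ref{fig:vertex_gadget}, which neither strand vertices nor disconnect components.
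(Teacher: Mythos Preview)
Your proposal is correct and follows exactly the paper's approach: the corollary is obtained by composing the \GI-hardness of \BCv{1} (Theorem~\ref{thm:1block_v_GI-hard}) with the polynomial-time many-one reduction $(G,H)\mapsto(G',H')$ furnished by Theorem~\ref{thm:1-block_iff_k-block}. The only slip is a harmless miscount---the $G$-gadget has $k+3$ vertices, not $k+2$---which does not affect the polynomial-time bound or the argument.
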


\section{Reducing Representation Size}
\label{sec:reducing}

Thus far we have addressed two problems.
We first gave an efficient algorithm, given directed graphs $G,H$ and $k$-block map $\Phi$, to verify whether $\Phi_\infty:X_G\to X_H$ is a conjugacy.
We then showed that the problem of deciding whether $X_G$ and $X_H$ are conjugate, given only $G$ and $H$, is \GI-hard.
We now address a problem given only $G$ and an integer $\ell$: whether we can find a $k$-block code which reduces the size of $G$ by $\ell$ vertices while preserving conjugacy.

\begin{definition}\label{def:1-block_reduction}
  Given a directed graph $G$ and integer $\ell$, the \emph{$k$-Block Reduction Problem}, denoted \BRv{k}, is to decide if there exists a directed graph $H$ with $|V_H|=|V_G|-\ell$ such that the vertex shifts $X_G$ and $X_H$ are conjugate via a $k$-block code.
\end{definition}

We will show this problem is \NP-complete for the case $k=1$, by modifying the hardness proof of the State Amalgamation Problem (\SAP), which asks if $\ell$ consecutive amalgamations can be performed on a graph $G$~\cite{SAP}.
The proof that \SAP is \NP-hard shows that the set of graphs satisfying a certain structure property is closed under the amalgamation operation.
This structure is then leveraged to encode an \NP-complete problem (Hitting Set).
While 1-block codes are more general than sequences of amalgamations (Figure~\ref{fig:a_dot_map}), we find that, surprisingly, the same set of graphs is also closed under 1-block conjugacy.
In fact, the rest of the construction of~\cite{SAP} suffices as well, though much of the argument needs to be strengthened to the general 1-block case.

We begin by recalling the structure property.

\begin{definition}[\cite{SAP}]
  A directed graph $G$ satisfies the \emph{structure property} if it is essential and there exists a partition $\{\{\alpha\}, A,B,C\}$ of $V_G$ such that the following four conditions hold.
  \begin{enumerate}
    \item $N^+(\alpha)=N^-(\alpha)=\{\alpha\} \cup A\cup C$.
    \item For each $a\in A$, $N^-(a)=\{a, \alpha\}$ and $\{a,\alpha\}\subseteq N^+(a)\subseteq \{a,\alpha\}\cup B$.
    \item For each $c\in C$, $N^+(c)=\{c,\alpha\}$ and $\{c,\alpha\}\subseteq N^-(c)\subseteq\{c,\alpha\}\cup B$.
    \item For each $b\in B$, $N^-(b)\subseteq A$ and $N^+(b)\subseteq C$.
  \end{enumerate}
\end{definition}
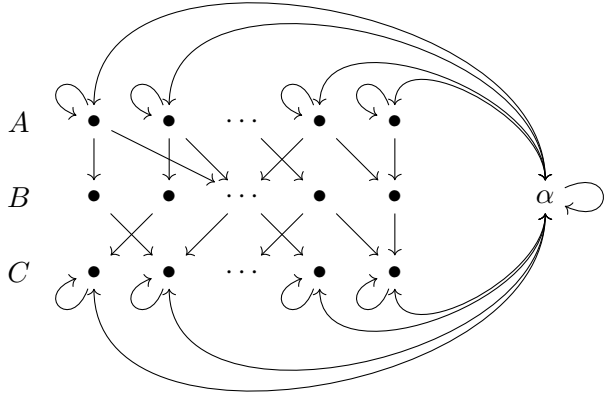
\begin{figure}
  \centering
  \begin{tikzpicture}
  \node (b1) at (0,0) {$\bullet$};
  \node (b2) at (1,0) {$\bullet$};
  \node (b3) at (2,0) {$\cdots$};
  \node (b4) at (3,0) {$\bullet$};
  \node (b5) at (4,0) {$\bullet$};
  \node (b) at (-1,0) {$C$};
  
  \node (i1) at (0,1) {$\bullet$};
  \node (i2) at (1,1) {$\bullet$};
  \node (i3) at (2,1) {$\cdots$};
  \node (i4) at (3,1) {$\bullet$};
  \node (i5) at (4,1) {$\bullet$};
  \node (i) at (-1,1) {$B$};
  
  \node (a1) at (0,2) {$\bullet$};
  \node (a2) at (1,2) {$\bullet$};
  \node (a3) at (2,2) {$\cdots$};
  \node (a4) at (3,2) {$\bullet$};
  \node (a5) at (4,2) {$\bullet$};
  \node (a) at (-1,2) {$A$};
  
  \node (alpha) at (6,1) {$\alpha$};
  
  \draw[<->] (a1) edge[out=90, in=90] (alpha) (a2) edge[out=90, in=90] (alpha) (a4) edge[out=90, in=90] (alpha) (a5) edge[out=90, in=90] (alpha);
  
  \draw[<->] (b1) edge[out=270, in=270] (alpha) (b2) edge[out=270, in=270] (alpha) (b5) edge[out=270, in=270] (alpha) (b4) edge[out=270, in=270] (alpha);
  
  \draw[->] (b1) edge[out=250, in=200, looseness=8] (b1) (b2) edge[out=250, in=200, looseness=8] (b2) (b4) edge[out=250, in=200, looseness=8] (b4) (b5) edge[out=250, in=200, looseness=8] (b5);
  
  \draw[->] (a1) edge[out=110, in=160, looseness=8] (a1) (a2) edge[out=110, in=160, looseness=8] (a2) (a4) edge[out=110, in=160, looseness=8] (a4) (a5) edge[out=110, in=160, looseness=8] (a5);
  
  \draw[->] (alpha) edge[out=25,in=-25,looseness=8] (alpha);
  
  \draw[->] (a1) edge (i1) (a1) edge (i3) (a2) edge (i2) (a2) edge (i3) (a4) edge (i3) (a4) edge (i5) (a5) edge (i5) (i1) edge (b2) (i2) edge (b1) (i3) edge (b4) (i4) edge (b3) (i4) edge (b5) (i5) edge (b5) (i3) edge (b2) (a3) edge (i4);
  \end{tikzpicture}
  \caption{A graph which satisfies the structure property}
  \label{fig:structure_property}
\end{figure}

See Figure~\ref{fig:structure_property} for an example.
We now show that the structure property is preserved under 1-block conjugacy.

\begin{lemma}\label{lemma:structure_lemma}
  Let $G$ be a graph with the structure property having $\{\{\alpha\}, A,B,C\}$ as the partition of $V_G$, and let $\Phi_\infty:X_G\to X_H$ be a 1-block conjugacy.
  Then $\phiv(v_1)=\phiv(v_2)$ implies $v_1=v_2$ or $v_1,v_2\in B$, so $H$ also satisfies the structure property with vertex partition $\{\{\phiv(\alpha)\},\phiv(A),\phiv(B),\phiv(C)\}$. 
\end{lemma}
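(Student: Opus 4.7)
The plan is to exploit the bijection on cycles guaranteed by Theorem~\ref{thm:conjugacy_iff_cycle_bijection} to constrain $\phiv$'s behavior on self-loop vertices, and then to rule out each remaining way that $\phiv$ could merge a $B$-vertex with a vertex outside $B$ by exhibiting either a diamond (Lemma~\ref{lemma:diamond_lemma}) or a short cycle in $H$ with no preimage under $\phic$. First, note $G$ is strongly connected via $\alpha$, so Theorem~\ref{thm:conjugacy_iff_cycle_bijection} applies and $\phic$ is a bijection on each $C_n(G)$. The $1$-cycles of $G$ are exactly the self-loops at $\{\alpha\}\cup A\cup C$, so the induced bijection on $C_1$ forces $\phiv$ to be injective on $\{\alpha\}\cup A\cup C$.

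Next I would suppose, toward contradiction, that $v_1\neq v_2$ with $\phiv(v_1)=\phiv(v_2)$ and $\{v_1,v_2\}\not\subseteq B$. By the previous step at least one of them lies in $B$; write $v_2=b\in B$ and $v_1\in\{\alpha\}\cup A\cup C$, and pick $a\in N^-(b)$ and $c\in N^+(b)$, both nonempty and lying in $A,C$ respectively since $G$ is essential. If $v_1=\alpha$, the paths $a\alpha c$ and $abc$ in $G$ share endpoints $a,c$ and have the same $\Phi$-image $\phiv(a)\phiv(\alpha)\phiv(c)=\phiv(a)\phiv(b)\phiv(c)$, so $\Phi$ collapses a diamond and Lemma~\ref{lemma:diamond_lemma} contradicts injectivity of $\Phi_\infty$. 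If $v_1=a_0\in A$, the chain $b\to c\to\alpha\to a_0$ in $G$ together with $\phiv(b)=\phiv(a_0)$ produces the $3$-cycle $(\phiv(a_0),\phiv(c),\phiv(\alpha))$ in $H$, which I plan to show has no preimage in $G$. In any candidate preimage $(u_1,u_2,u_3)$, analyzing the edge $u_2\to u_3$ gives either $u_2=c$ (so $u_3\in N^+(c)=\{c,\alpha\}$ and, by injectivity, $u_3=\alpha$), or $u_2\in B$ with $\phiv(u_2)=\phiv(c)$ (so $u_3\in N^+(u_2)\subseteq C$ yet $\phiv(u_3)=\phiv(\alpha)$, impossible by injectivity on $\{\alpha\}\cup A\cup C$). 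Then $\alpha\to u_1$ forces $u_1\in\{\alpha\}\cup A\cup C$, hence $u_1=a_0$ by injectivity, but $a_0\to c$ fails since $N^+(a_0)\cap C=\emptyset$. The case $v_1=c_0\in C$ is symmetric, using the $3$-cycle $(\phiv(\alpha),\phiv(a),\phiv(c_0))$ and tracing $u_1\to u_2$ instead.

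Once these cases are closed, $\{\{\phiv(\alpha)\},\phiv(A),\phiv(B),\phiv(C)\}$ is a genuine partition of $V_H$ and the four structure conditions for $H$ transfer routinely: for $v\in\{\alpha\}\cup A\cup C$ we have $\phiv^{-1}(\phiv(v))=\{v\}$, so $N^\pm(\phiv(v))$ in $H$ is exactly $\phiv(N^\pm(v))$, which yields conditions~1--3; for $b\in B$, every preimage of $\phiv(b)$ lies in $B$, so its in- and out-neighbors lie in $A$ and $C$ respectively, giving condition~4, with essentiality of $H$ inherited from $G$ via the surjectivity of $\phiv$. The main obstacle is the $A$-case (and symmetrically $C$): a direct length-$2$ diamond as in the $\alpha$-case fails because $A$-vertices have no common successors with $B$-vertices, which is what forces the detour through the $3$-cycle preimage argument using $b\to c\to\alpha\to a_0$.
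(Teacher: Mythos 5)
Your proof is correct and follows essentially the same strategy as the paper's: first establish injectivity of $\phiv$ on the self-loop vertices $\{\alpha\}\cup A\cup C$ via periodic points/cycles, then rule out merging a $B$-vertex with $\alpha$ via a length-two diamond and with an $A$- or $C$-vertex via a $3$-cycle in $H$ that admits no preimage cycle in $G$. Your direct enumeration of candidate preimages even streamlines the paper's argument slightly, collapsing its two subcases (according to whether $(v_1,v_2)\in E_G$) into one.
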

\begin{proof}
  First note that if $\Phi:X_G\to X_H$ is a 1-block conjugacy from a graph $G$ with vertex partition $\{\{\alpha\},A,B,C\}$ such that $\Phi(v_1)=\Phi(v_2)$ implies $v_1=v_2$ or $v_1,v_2\in B$, then the fact that $H$ satisfies the structure property with vertex partition $\{\{\phiv(\alpha)\},\phiv(A),\phiv(B),\phiv(C)\}$ follows immediately.
  Now suppose for a contradiction that $v_1\neq v_2\in V_G$ and $\phiv(v_1)=\phiv(v_2)$; we proceed in cases.

  \noindent
  Case 1: $v_1,v_2\in\{\alpha\}\cup A\cup C$.
  Then $\Phi_\infty(v_1^\infty)=\Phi_\infty(v_2^\infty)$ and $\Phi_\infty$ is not a conjugacy.

  \noindent
  Case 2: $v_1=\alpha,v_2\in B$.
  Let $a\in A, c\in C$ be such that $av_2c$ is a word in $X_G$.
  (Such $a,c$ exist as $G$ is essential.)
  Then $\Phi_\infty((av_2c\alpha)^\infty)=\Phi_\infty((a\alpha c\alpha)^\infty)$ and $\Phi_\infty$ is not a conjugacy.

  \noindent
  Case 3a: $v_1\in A,v_2\in B,(v_1,v_2)\notin E_G$.
  Let $a\in A$ be such that $(a,v_2)\in E_G$.
  Note that $a\neq v_1$.
  Consider the point \[p=(\phiv(a)\phiv(v_2)\phiv(\alpha))^\infty=(\phiv(a)\phiv(v_1)\phiv(\alpha))^\infty\] in $X_H$ of period 3.
  Due to $G$ having the structure property and our assumption that $\Phi_\infty$ is a 1-block conjugacy, the preimage of $p$ must be defined by a 3-cycle whose vertices are contained in $\{\alpha\}\cup A\cup C$.
  In particular, the preimage must trace a self-loop, so we know $\phiv(a)=\phiv(\alpha)$ or $\phiv(a)=\phiv(v_1)$ or $\phiv(v_1)=\phiv(\alpha)$.
  Since we know $\phiv$ is injective on $\{\alpha\}\cup A\cup C$ by Case 1, none of these are possible.

  \noindent
  Case 3b: $v_1\in A, v_2\in B,(v_1,v_2)\in E_G$.
  Let $c\in C$ be such that $(v_2,c)\in E_G$.
  Consider the point \[p=(\phiv(v_2)\phiv(c)\phiv(\alpha))^\infty=(\phiv(v_1)\phiv(c)\phiv(\alpha))^\infty\] in $X_H$ of period 3.
  Again by the requirement that the preimage of $p$ traces a self-loop, we know $\phiv(v_1)=\phiv(c)$ or $\phiv(v_1)=\phiv(\alpha)$ or $\phiv(\alpha)=\phiv(c)$.
  However, all of these situations violate the injectivity of $\phiv$ on $\{\alpha\}\cup A\cup C$.

  \noindent
  Case 4: $v_1\in C,v_2\in B$.
  This is identical to Case 3 where the edges in the graph have been reversed.
\end{proof}

As in~\cite{SAP}, we will need a ``weight widget'' which acts as a weighted switch, using the following notation.
Let $v$ be a vertex with $N^-(v)=D$ and $N^+(v)=E$.
We will write $v:[D,E]$ in this situation, and as a slight abuse of notation, we will drop the curly brackets if $E$ or $D$ is a singleton and write $v:[u,E]$.
Additionally, we extend this notation to sets $S$ of vertices in the obvious way and write $S:[D,E]$.

\begin{definition}[\cite{SAP}]
  Let $G$ satisfy the structure property with $V_G=\{\alpha\}\cup A\cup B\cup C$, and let $K>0$ be a fixed even integer.
  Then for nonempty subsets $A_*\subseteq A, C_*\subseteq C$, the \emph{weight widget} $w=\weight[A_*,C_*]$ is the following collection of vertices.
  \begin{itemize}\setlength{\itemsep}{0pt}
    \item $A_w=\{a_1^w,\ldots,a_{K/2}^w\}$
    \item $B_w=\{b_1^w\ldots,b_K^w\}$
    \item $C_w=\{c_1^w,\ldots,c_{K/2}^w\}$
  \end{itemize}
  where $A_w\cap A_*=\emptyset = C_w\cap C_*$, and for each $i\in \{1,\ldots,K/2\}$ we have
  \begin{itemize}\setlength{\itemsep}{0pt}
    \item $b_{2i-1}:[A_*\cup \{a_1^w,\ldots,a^w_{i-1}\},c^w_i]$
    \item $b_{2i}:[a_i^w,C_*\cup \{c_1^w,\ldots,c^w_i\}]$.
  \end{itemize}
  Moreover, we require these to be the only images of $A_w$ in $B$, i.e., $B\cap N^+(a_i^w)\subseteq B_w$ for all $a_i^w\in A_w$, and similarly for the preimage of $C_w$.
  For a given 1-block conjugacy $\Phi_\infty$, letting $S=\Phi^{-1}(\Phi(b_1^w))\setminus B_w = \{b\in B: \Phi(b)=\Phi(b^w_1)\}\setminus B_w$, we say $w$ is \emph{activated} if $S:[A_*,C_*]$.
\end{definition}
See Figure~\ref{fig:weight_widget} for an example.
The term ``activate'' comes from the following fact, which we show below in Lemma~\ref{lemma:weight_lemma}(1): if $S$ is a singleton, then the construction of the weight widget allows the states $S\cup B_w$ to be amalgamated sequentially into a single state.
For example, the vertex $v$ in Figure~\ref{fig:weight_widget} can activate the weight widget shown.
The next two lemmas show that these amalgamations cannot be performed if the widget is not activated.

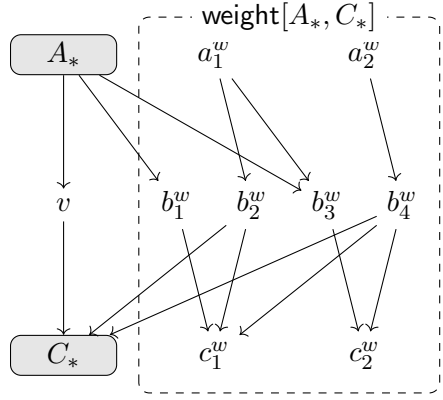
\begin{figure}
  \centering
  \begin{tikzpicture}
  \tikzstyle{set} = [rectangle, rounded corners, draw=black, fill=black!10, text width=1.2cm, text centered, inner sep=3pt]
  \node[set] (C) at (0.5,-0.5) {$C_*$};
  \node (v) at (0.5,1.5) {$v$};
  \node[set] (A) at (0.5,3.5) {$A_*$};
  \node (b1) at (2,1.5) {$b_1^w$};
  \node (b2) at (3,1.5) {$b_2^w$};
  \node (b3) at (4,1.5) {$b_3^w$};
  \node (b4) at (5,1.5) {$b_4^w$};
  \node (a1) at (2.5,3.5) {$a_1^w$};
  \node (a2) at (4.5,3.5) {$a_2^w$};
  \node (c1) at (2.5,-0.5) {$c_1^w$};
  \node (c2) at (4.5,-0.5) {$c_2^w$};
  
  \draw[dashed, rounded corners] (1.5,-1) rectangle (5.5,4);
  
  \node[fill=white] (weight) at (3.5,4) {$\weight[A_*,C_*]$};
  
  \draw[->] (a1) edge (b2) (a1) edge (b3) (a2) edge (b4) (b1) edge (c1) (b2) edge (c1) (b3) edge (c2) (b4) edge (c2) (b4) edge (c1);
  
  \draw[->] (A) edge (b1) (A) edge (b3) (A) edge (v) (v) edge (C) (b2) edge (C) (b4) edge (C);
  \end{tikzpicture}
  \caption{The weight widget $\weight[A_*,B_*]$ with $K=4$.}
  \label{fig:weight_widget}
\end{figure}

\begin{lemma}\label{lemma:no_w_i_left_behind}
  Let $w=\weight[A_*,C_*]$ be a weight widget in $G$.
  If $\Phi_\infty:X_G\to X_H$ is a 1-block conjugacy between graphs with the structure property, then for any $v\in V_H$, the statement $b^w_\ell\in \Phi^{-1}(v)$ for $\ell>1$ implies $b^w_{\ell-1}\in \Phi^{-1}(v)$ or $|\Phi^{-1}(v)|=1$.
\end{lemma}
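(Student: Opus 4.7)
The plan is to argue the contrapositive: assuming $b^w_\ell \in D := \Phi^{-1}(v)$ with $\ell > 1$ and $|D| \geq 2$, show that $b^w_{\ell-1} \in D$. By Lemma~\ref{lemma:structure_lemma}, every element of $D$ lies in $B$, and by Lemma~\ref{lemma:diamond_lemma}, $\Phi$ does not collapse a diamond, so for any distinct $b, b'' \in D$,
\[
  N^-(b) \cap N^-(b'') = \emptyset \quad\text{or}\quad N^+(b) \cap N^+(b'') = \emptyset.
\]

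The key intermediate step is a \emph{partition identity}: for every $a \in \bigcup_{b \in D} N^-(b)$, letting $D_a := \{b \in D : a \in N^-(b)\}$, the family $\{N^+(b) : b \in D_a\}$ partitions $N^+(v)$; symmetrically for each $c \in \bigcup_{b \in D} N^+(b)$. To derive this, I would observe that since $\Phi_\infty$ is a 1-block conjugacy, surjectivity on blocks plus non-diamond uniqueness give a bijection between $H$-paths $\bar a \to v \to \bar c$ and $G$-paths $a \to b \to c$ with $b \in D$; combined with injectivity of $\Phi$ on $A$ and $C$ (Lemma~\ref{lemma:structure_lemma}), this yields $|N^-(v)| \cdot |N^+(v)| = \sum_{b \in D} |N^-(b)| \cdot |N^+(b)|$. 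Regrouping this identity by $a \in N^-(v)$ and noting that the non-diamond property forces pairwise disjointness of $\{N^+(b) : b \in D_a\}$ (any two such $b$ share the in-neighbor $a$, so their out-neighborhoods must be disjoint) makes each term agree, yielding the partition.

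For the case $\ell = 2i$, I would suppose for contradiction $b^w_{2i-1} \notin D$. The widget constraint $N^-(c^w_i) \cap B \subseteq B_w$ restricts $D_{c^w_i}$ to $\{b^w_{2i-1}\} \cup \{b^w_{2j} : j \geq i\}$; with $b^w_{2i-1} \notin D$, the partition at $c^w_i$ forces $N^-(v) \subseteq A_w$. This excludes $b^w_{2k-1} \in D$ (since $A_* \subseteq N^-(b^w_{2k-1})$ while $A_* \cap A_w = \emptyset$) and excludes $b' \in B \setminus B_w$ (since any $a^w_j \in N^-(b')$ would force $b' \in N^+(a^w_j) \cap B \subseteq B_w$). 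So every additional $b' \in D$ must be some $b^w_{2j}$ with $j > i$. Applying the partition at $a^w_i$ then forces $D_{a^w_i} = \{b^w_{2i}\}$ and $N^+(v) = C_* \cup \{c^w_1, \ldots, c^w_i\}$, while applying it at $a^w_j$ gives $N^+(v) = C_* \cup \{c^w_1, \ldots, c^w_j\}$, which contradicts $j > i$. The case $\ell = 2i-1$ with $i \geq 2$ is completely symmetric under edge reversal: partition at $a^w_{i-1}$ forces $N^+(v) \subseteq C_w$ when $b^w_{2i-2} \notin D$, ruling out all $b^w_{2k} \in D$ (since $C_* \not\subseteq C_w$) and $B \setminus B_w$ candidates, and any spurious $b^w_{2k-1} \in D$ with $k \neq i$ is eliminated by comparing the partitions at $c^w_i$ and $c^w_k$.

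The main obstacle is establishing the partition identity: without it, the non-diamond property alone cannot pin down the structure of $D$ tightly enough. Once the identity is in hand, the widget's careful stratification ($A_w \cap A_* = \emptyset$, $C_w \cap C_* = \emptyset$, and the containments $B \cap N^+(a^w_i) \subseteq B_w$ and $B \cap N^-(c^w_j) \subseteq B_w$) causes the case analysis to collapse with only minor bookkeeping, since any partner of $b^w_\ell$ in $D$ is forced either to be the desired $b^w_{\ell-1}$ or to conflict with the nested containment $N^+(b^w_{2j}) \supsetneq N^+(b^w_{2i})$ for $j > i$.
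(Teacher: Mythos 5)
Your plan is correct: the covering half of your ``partition identity'' is exactly the statement that every length-$3$ word $\bar a v\bar c$ of $X_H$ has a preimage through $\Phi^{-1}(v)$, which (together with injectivity of $\Phi$ on $A\cup C\cup\{\alpha\}$ from Lemma~\ref{lemma:structure_lemma} and the no-diamond property from Lemma~\ref{lemma:diamond_lemma}) is also the engine of the paper's proof. The organization, however, is genuinely different. The paper argues the contrapositive, takes $\ell$ \emph{maximal} with $b^w_\ell\in\Phi^{-1}(v)$ and $b^w_{\ell-1}\notin\Phi^{-1}(v)$, and exhibits a single witness word $\bar a\, v\,\Phi(c^w_{\ell/2})$ with no preimage; you instead pin down $\Phi^{-1}(v)$ completely by applying the covering identity at several in-/out-neighbors and derive a contradiction by comparing the resulting set equalities (e.g.\ $N^+(v)=\Phi(C_*\cup\{c^w_1,\dots,c^w_i\})$ versus $\Phi(C_*\cup\{c^w_1,\dots,c^w_j\})$). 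Your route avoids the maximality trick and is more systematic, at the cost of more machinery; note that the disjointness half of your identity is never actually used (covering alone suffices everywhere, since whenever you invoke the exact partition the relevant $D_a$ or $D_c$ is a singleton), so the counting detour via $|N^-(v)|\cdot|N^+(v)|=\sum_b|N^-(b)|\cdot|N^+(b)|$ can be dropped in favor of the direct preimage argument. Two small points to tighten: (i) $N^\pm(b)\subseteq V_G$ while $N^\pm(v)\subseteq V_H$, so the partition should be stated for the images $\Phi(N^\pm(b))$, with the identification justified by injectivity on $A$ and $C$; (ii) your claim that the second element of $D$ is $b^w_{2j}$ with $j>i$ is not justified as written, but it is also unnecessary --- $j\neq i$ already yields the contradiction, since $C_*\cup\{c^w_1,\dots,c^w_i\}=C_*\cup\{c^w_1,\dots,c^w_j\}$ forces $i=j$.
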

\begin{proof}
  By contrapositive, suppose $|\Phi^{-1}(v)|>1$ and there exists $b^w_\ell\in \Phi^{-1}(v)$ such that $b^w_{\ell-1}\notin \Phi^{-1}(v)$.
  Without loss of generality, let $\ell$ be the largest such subscript.
  We have two cases.
  
  \noindent
  Case 1: $\ell$ is even.
  We claim there must exist $a\in N^-(v)\setminus\{\Phi(a^w_{\ell/2}),\ldots,\Phi(a^w_{K/2})\}$.
  To see this claim, first note the weight widget construction guarantees that for every $b\in B\setminus B_w$, we have $N^-(b)\cap A_w=\emptyset$.
  Also by the weight widget construction, for any $b^w_{2i+1}\in B_w$, we have $A_*\subseteq N^-(b^w_{2i+1})\setminus A_w$.
  Noting that $\weight[A_*,B_*]$ is not defined when $A_*=\emptyset$ and $G$ is not essential when $N^-(b)=\emptyset$, if we have $\Phi^{-1}(v)\setminus\{b^w_{2i}:b^w_{2i}\in B_w\}\neq\emptyset$, then we must have $N^-(v)\setminus \Phi(A_w)\neq\emptyset$.
  That is, the claim is satisfied in the case when $\Phi^{-1}(v)\not\subseteq\{b^w_{2i}:b^w_{2i}\in B_w\}$.
  Assuming now that $\Phi^{-1}(v)\subseteq\{b^w_{2i}:b^w_{2i}\in B_w\}$, we note our earlier assumption that $|\Phi^{-1}(v)|>1$ guarantees there exists $b^w_{2j}\in \Phi^{-1}(v)$ for some index $2j\neq \ell$.
  By our other assumption that $\ell$ is the largest such subscript, we have $2j< \ell$.
  Then $\Phi(a^w_{j})\in N^-(v)$, and the claim follows.
  Proceeding, we then have $\Phi(a)v\Phi(c^w_{\ell/2})$ is a word in $X_H$; however, $N^-(c^w_{\ell/2})= \{b^w_{\ell-1}\}\cup \{b^w_\ell,b^w_{\ell+2},\ldots,b^w_K\}$ and for all $b_{2i}^w\in N^-(c^w_{\ell/2})\setminus\{b^w_{\ell-1}\},(a,b_{2i}^w)\notin E_G$.
  Since $b_{\ell-1}^w\notin \Phi^{-1}(v)$ by assumption, this word has no preimage in $X_G$, so $\Phi_\infty$ is not a conjugacy.
  
  \noindent
  Case 2: $\ell$ is odd.
  Using an argument symmetric to the one in case 1, we get that there must exist $c\in N^+(v)\setminus \{\Phi(c^w_{(\ell+1)/2}),\ldots,\Phi(c^w_{K/2})\}$.
  Then $\Phi(a^w_{(\ell-1)/2})v\Phi(c)$ is a word in $X_H$ with no preimage, so $\Phi_\infty$ is not a conjugacy.
\end{proof}

\begin{lemma}\label{lemma:weight_lemma}
  Suppose $w=\weight[A_*,C_*]$ is a weight widget in $G$. Then
  \begin{enumerate}[1.]
    \item Suppose $\Phi_\infty:X_G\to X_{G'}$ is a 1-block conjugacy such that $\Phi^{-1}(\Phi(b_i^w))=\{b_i^w\}$ for all $b_i^w\in B_w$ and $V_{G'}$ contains $v:[A_*,B_*]$.
    Defining $\Phi'_\infty:X_G\to X_H$ by \[\Phi'(u)=
    \begin{cases}
      \Phi(u), & \text{if }u\notin \Phi^{-1}(v)\cup B_w \\ v, & \text{if }u\in \Phi^{-1}(v)\cup B_w
    \end{cases} \] where $H$ is the minimal graph induced by $G$ and $\Phi'$, then $\Phi'$ is a 1-block conjugacy with $|V_H|=|V_{G'}|-K$.
    \item If $w=\weight[A_*,C_*]$ is not activated and $\Phi_\infty:X_G\to X_H$ is a 1-block conjugacy, then $\Phi^{-1}(\Phi(b^w_i))$ is a singleton for every $b^w_i$ with $i>1$.
  \end{enumerate}
\end{lemma}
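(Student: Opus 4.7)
The two parts are essentially complementary characterizations of when the weight widget may be collapsed by a 1-block conjugacy. For Part~1, my plan is to exhibit $\Phi'_\infty$ as the composition of the given conjugacy $\Phi_\infty\colon X_G\to X_{G'}$ with a sequence of $K$ amalgamations performed in $G'$. Because $\Phi^{-1}(\Phi(b^w_i))=\{b^w_i\}$ for every $i$ and $\Phi_\infty$ is a conjugacy, the widget's local neighborhood structure transfers verbatim to $G'$: we have $\Phi(b^w_{2j-1}):[\Phi(A_*\cup\{a^w_1,\ldots,a^w_{j-1}\}),\Phi(c^w_j)]$ and analogously $\Phi(b^w_{2j}):[\Phi(a^w_j),\Phi(C_*\cup\{c^w_1,\ldots,c^w_j\})]$. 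Combined with the hypothesized $v:[\Phi(A_*),\Phi(C_*)]$ in $G'$, this sets up a sequential amalgamation: first $v$ with $\Phi(b^w_1)$ (identical in-neighborhoods, disjoint out-neighborhoods), then the result with $\Phi(b^w_2)$ (identical out-neighborhoods, disjoint in-neighborhoods), and so on, alternating through all $K$ widget images. Each of the $K$ steps satisfies an amalgamation condition and reduces the vertex count by one, so $|V_H|=|V_{G'}|-K$, and $\Phi'_\infty$ is the composition of two 1-block conjugacies and hence itself a 1-block conjugacy.

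For Part~2 I plan to prove the contrapositive: if $|\Phi^{-1}(\Phi(b^w_\ell))|>1$ for some $\ell>1$, then $w$ is activated. Let $v=\Phi(b^w_\ell)$; cascading Lemma~\ref{lemma:no_w_i_left_behind} from index $\ell$ down to $1$ yields $\{b^w_1,\ldots,b^w_\ell\}\subseteq\Phi^{-1}(v)$. Setting $S=\Phi^{-1}(v)\setminus B_w$, I must show $S$ is nonempty with $N^-(s)=A_*$ and $N^+(s)=C_*$ for every $s\in S$. The structure property forces $N^-(s)\subseteq A$ and $N^+(s)\subseteq C$, and the widget requirement $B\cap N^+(A_w)\subseteq B_w$ immediately rules out neighbors in $A_w$ (symmetrically for $C_w$). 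To exclude $a\in A\setminus(A_*\cup A_w)$ from $N^-(s)$, I would construct two distinct periodic walks $(\alpha\, a\, s\, c)^\infty$ and $(\alpha\, a\, b^w_i\, c)^\infty$ with the same $\Phi_\infty$-image, for a suitable $b^w_i\in\Phi^{-1}(v)$ with $a\in N^-(b^w_i)$ and $c\in N^+(b^w_i)$, contradicting injectivity; the argument for $N^+(s)\subseteq C_*$ is symmetric. The reverse inclusions $A_*\subseteq N^-(s)$, $C_*\subseteq N^+(s)$, and the nonemptiness of $S$ follow from surjectivity: edges of $v$ in $H$ that originate from $\Phi(A_*)$ (or exit to $\Phi(C_*)$) and are not already accounted for by $\Phi(B_w)$ must be witnessed by edges into (or out of) some $s\in S$ in $G$.

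The main obstacle I expect is the case where no convenient $b^w_i\in\Phi^{-1}(v)$ exists to pair up with an aberrant $a\in A\setminus(A_*\cup A_w)$, i.e., when $\{i:b^w_i\in\Phi^{-1}(v)\}$ is a strict subset of $\{1,\ldots,K\}$. Here I plan to rely on the fact that $\ell\geq 2$ always gives access to both $b^w_1$ and $b^w_2$, whose in-neighborhoods $A_*$ and $\{a^w_1\}$ and out-neighborhoods $\{c^w_1\}$ and $C_*\cup\{c^w_1\}$ are rich enough to launch the case analysis, and to invoke Lemma~\ref{lemma:diamond_lemma} directly on diamonds in $G$ formed by pairing $s$ with $b^w_1$ or $b^w_2$ to close off any remaining exceptional neighbors.
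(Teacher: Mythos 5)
Part~1 of your proposal is correct and takes essentially the paper's route: the paper likewise realizes $\Phi'$ by appending $K$ sequential amalgamations of $\{v\}\cup\Phi(B_w)$ (alternating the in-/out-amalgamation conditions exactly as you describe) to a splitting/amalgamation sequence realizing $\Phi_\infty$.

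For Part~2, your skeleton (contrapositive, cascading Lemma~\ref{lemma:no_w_i_left_behind} to get $b^w_1,\ldots,b^w_\ell\in\Phi^{-1}(v)$, then verifying the activation condition on $S=\Phi^{-1}(v)\setminus B_w$) is workable, and your surjectivity argument for $S\neq\emptyset$ and for the inclusions $A_*\subseteq N^-(S)$, $C_*\subseteq N^+(S)$ is essentially what the paper does. But the step excluding an aberrant neighbor $a\in A\setminus(A_*\cup A_w)$ from $N^-(s)$ has a genuine gap. You propose to contradict injectivity via a diamond pairing $s$ with some $b^w_i\in\Phi^{-1}(v)$ satisfying $a\in N^-(b^w_i)$; no such $b^w_i$ can exist, since by construction $N^-(b^w_{2j-1})=A_*\cup\{a^w_1,\ldots,a^w_{j-1}\}$ and $N^-(b^w_{2j})=\{a^w_j\}$, so every in-neighbor of every $b^w_i$ lies in $A_*\cup A_w$. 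Your fallback of building diamonds from $b^w_1$ or $b^w_2$ fails for the same reason: a diamond needs a common in-neighbor, and $a$ is not an in-neighbor of either. Indeed this case is not an injectivity failure at all; the paper handles it through surjectivity. The word $\Phi(a)\,v\,\Phi(c^w_1)$ lies in $X_H$ (via the edges $a\to s$ and $b^w_1\to c^w_1$), but a preimage would require a vertex in $\Phi^{-1}(v)\cap N^+(a)\cap N^-(c^w_1)$, and since $N^-(c^w_1)\subseteq B_w$ while $N^+(a)\cap B_w=\emptyset$ for $a\notin A_*\cup A_w$, no preimage exists. Substituting this (and its time-reversal for $N^+(s)$) for your diamond argument closes the gap. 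You might also adopt the paper's shortcut: it never establishes full activation, but only shows $b^w_2\notin\Phi^{-1}(\Phi(b^w_1))$, which combined with Lemma~\ref{lemma:no_w_i_left_behind} already forces $\Phi^{-1}(\Phi(b^w_i))$ to be a singleton for every $i>1$.
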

\begin{proof}
  (1) Consider the sequence of splittings followed by the sequence of amalgamations which transforms $G$ into $G'$.
  Then note that by the construction of the weight widget, $\{v:[A_*,C_*]\}\cup B_w$ can be amalgamated sequentially for an additional $K$ amalgamations.
  
  (2) Suppose $b^w_1\in\Phi^{-1}(v)$ for some $v\in V_H$ and consider $V=\Phi^{-1}(v)\setminus B_w$.
  By Lemma~\ref{lemma:no_w_i_left_behind} it suffices to show $b^w_2\notin \Phi^{-1}(v).$
  By definition of $w$ not being activated, we have two cases.
  
  \noindent
  Case 1: $N^-(V)\neq A_*$.
  If there is some $a\in N^-(V)\setminus A_*$, then $\Phi(a)v\Phi(c^w_1)$ is a word in $X_H$.
  Since there is no state in $G$ connecting $a$ with $c^w_1$, the word has no preimage in $X_G$ and $\Phi_\infty$ is not a conjugacy.
  Otherwise, there is some $a\in A_*\setminus N^-(V)$.
  By contrapositive, suppose $b^w_2\in \Phi^{-1}(v)$.
  Picking any $c\in C_*$, we have $\Phi(a)v\Phi(c)$ is a word in $X_H$.
  Since there is no state in $\Phi^{-1}(v)$ connecting $a$ with $c$, the word has no preimage and $\Phi_\infty$ is not a conjugacy.
  
  \noindent
  Case 2: $N^+(V)\neq C_*$.
  By contrapositive, suppose $b^w_2\in \Phi^{-1}(v)$.
  If there is some $c\in N^+(V)\setminus C_*$, then $\Phi(a^w_1)v\Phi(c)$ is a word in $X_H$.
  Since there is no state in $G$ connecting $a^w_1$ with $c$, the word has no preimage and $\Phi_\infty$ is not a conjugacy.
  Otherwise, there is some $c\in C_*\setminus N^+(V)$.
  Considering any $a\in N^-(V)$, we have $\Phi(a)v\Phi(c)$ is a word in $X_H$.
  Since there is no state in $\Phi^{-1}(v)$ connecting $a$ with $c$, the word has no preimage and $\Phi_\infty$ is not a conjugacy.
\end{proof}

We now define the Hitting Set problem, which is \NP-complete~\cite{karp1972reducibility}, and state a lemma which we will need in the proof.

\begin{definition}
  Let $\cS=\{S_1,\ldots,S_m\}$ be a collection of sets with $\bigcup_i S_i=U$. Given a subset $S\subseteq U$, we define its \emph{hit set} as $\hit(S)=\{S_i:S\cap S_i\neq \emptyset\}$. Given $\cS,U$, and an integer $t$, the \emph{hitting set problem}, denoted \Hit, is to decide whether there is a set $H$ of cardinality $t$ such that $\hit(H)=\cS$.
  We will also overload this notation, and write $\hit(s)$ to mean $\hit(\{s\})$ for $s\in U$.
\end{definition}

\begin{lemma}[\cite{SAP}]\label{lemma:hitH-H} 
  Let $(\cS,U,t)$ be an instance of \Hit. Suppose for some $t\leq |\cS|$ there is no $H$ with $|H|\leq t$ and $\hit(H)=\cS$. Then for all $H\subseteq U$, $|\hit(H)|-|H|<|\cS|-t$.
\end{lemma}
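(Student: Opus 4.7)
The plan is a short contradiction argument: assume some $H\subseteq U$ violates the conclusion, then construct an explicit hitting set of size at most $t$, contradicting the hypothesis. This is the natural ``augmentation'' proof where we patch $H$ into a full hitting set by adding one representative from each missing set.

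Concretely, I would suppose for contradiction that there exists $H\subseteq U$ with $|\hit(H)|-|H|\geq |\cS|-t$. Let $\cS\setminus\hit(H) = \{S_{i_1},\ldots,S_{i_r}\}$ be the sets not yet hit, where $r = |\cS|-|\hit(H)|$. For each $j\in\{1,\ldots,r\}$ pick an arbitrary element $x_j\in S_{i_j}$ (such an element exists since every $S_{i_j}$ is nonempty, as $S_{i_j}\in\cS$). Set $H' = H \cup \{x_1,\ldots,x_r\}$.

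By construction every set in $\cS\setminus\hit(H)$ contains some $x_j$, so $\hit(H') = \cS$. For the size bound,
\[|H'| \;\leq\; |H| + r \;=\; |H| + |\cS|-|\hit(H)| \;\leq\; |H| + (|\cS|-|H|-(|\cS|-t)) \;=\; t,\]
where the second inequality uses the assumed violation $|\hit(H)|-|H|\geq |\cS|-t$. Thus $H'$ is a hitting set of size at most $t$, contradicting the hypothesis that no such hitting set exists. Hence no violating $H$ can exist, and the strict inequality $|\hit(H)|-|H|<|\cS|-t$ holds for every $H\subseteq U$.

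There is no real obstacle here; the only thing to double-check is the direction of the inequality and the fact that the construction gives a genuine hitting set (which follows because each $S_i\in\cS$ is nonempty by convention, so picking one element per missing set is always possible). The hypothesis $t\leq |\cS|$ is not even used directly in the argument, but it ensures the statement is nonvacuous: for $t>|\cS|$ the trivial hitting set $U$ (or even simpler ones) always has size at most $t$, so the premise would be empty.
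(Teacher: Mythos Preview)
Your proof is correct. The paper does not actually prove this lemma; it is quoted from~\cite{SAP} without proof, so there is no argument in the paper to compare against. Your augmentation argument---assume a violating $H$, patch it to a full hitting set by adding one element per unhit $S_i$, and bound the resulting size by $t$---is the standard proof and exactly what one would expect the cited reference to contain.
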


\begin{figure}
  \centering
  \begin{tikzpicture}
  \node (b1) at (0,0) {$b_{S_1u_1}$};
  \node (b2) at (1.5,0) {$b_{S_1u_2}$};
  \node (b3) at (3,0) {$b_{S_2u_2}$};
  \node (b4) at (4.5,0) {$b_{S_2u_3}$};
  \node (b5) at (6,0) {$b_{S_1\beta}$};
  \node (b6) at (7.5,0) {$b_{S_2\beta}$};
  
  \node (a1) at (2.5,2) {$S_1$};
  \node (a2) at (5,2) {$S_2$};
  
  \node (c1) at (0.6,-2) {$u_1$};
  \node (c2) at (2.7,-2) {$u_2$};
  \node (c3) at (4.8,-2) {$u_3$};
  \node (c4) at (6.9,-2) {$\beta$};
  
  \draw[->] (a1) edge (b1) (a1) edge (b2) (a2) edge (b3) (a2) edge (b4) (a1) edge (b5) (a2) edge (b6);
  \draw[->] (b1) edge (c1) (b2) edge (c2) (b3) edge (c2) (b4) edge (c3) (b5) edge (c4) (b6) edge (c4);
  
  \node (alpha) at (10,0) {$\alpha$};
  
  \draw[<->] (a1) edge[bend left, in=120, out=45] (alpha) (a2) edge[bend left, in=130, out=45] (alpha);
  
  \draw[<->] (c1) edge[bend right, in=-110, out=-45] (alpha) (c2) edge[bend right, in=-115, out=-45] (alpha) (c3) edge[bend right, in=-124, out=-45] (alpha) (c4) edge[bend right, in=-140, out=-45] (alpha);
  
  \draw[->] (a1) edge[loop left] (a1) (a2) edge[loop left] (a2) (c1) edge[loop left] (c1) (c2) edge[loop left] (c2) (c3) edge[loop left] (c3) (c4) edge[loop left] (c4) (alpha) edge[loop right] (alpha);
  \end{tikzpicture}
  \caption{The graph constructed in Theorem~\ref{thm:1block-NPcomplete} for the \Hit instance with $\cS=\{\{u_1,u_2\},\{u_2,u_3\}\}$, without any weight widgets attached.}
  \label{fig:hitting_set_reduction}
\end{figure}
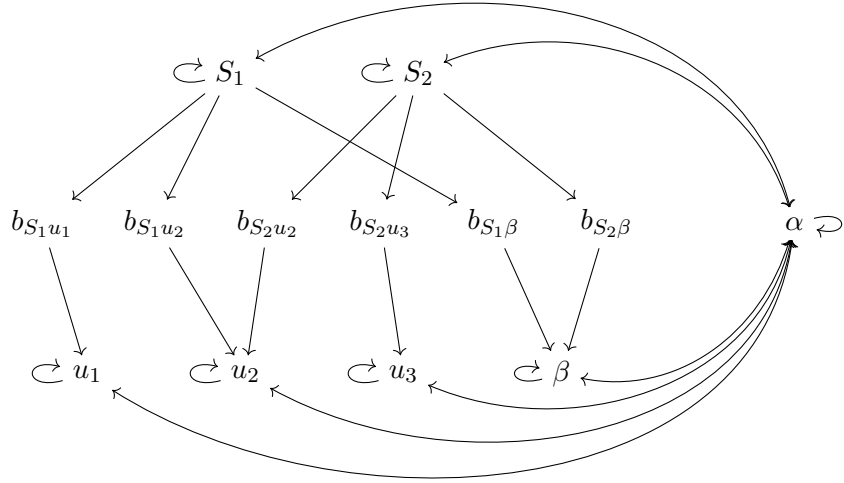

We now show that \BRv{1} is \NP-complete, by reduction from \Hit.
Given the lemmas developed above, the result essentially follows from the argument in~\cite{SAP}, with minor modifications for the 1-block case; for completeness, we give the full proof.

\begin{theorem}\label{thm:1block-NPcomplete}
  \OBRv is \NP-complete.
\end{theorem}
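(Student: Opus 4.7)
The plan is two-fold: show membership in \NP\ via our verification algorithm, then establish hardness by reduction from \Hit, largely recycling the construction of~\cite{SAP} but leaning on Lemmas~\ref{lemma:structure_lemma},~\ref{lemma:no_w_i_left_behind}, and~\ref{lemma:weight_lemma} to upgrade the argument from amalgamation-sequences to arbitrary 1-block codes. Membership is immediate: given $(H,\phiv)$ with $|V_H|=|V_G|-\ell$, Corollary~\ref{cor:verify_conjugacy_reducible} checks in polynomial time whether $\Phi_\infty:X_G\to X_H$ is a conjugacy, and the certificate has size polynomial in $|V_G|$.

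For hardness, given a \Hit\ instance $(\cS,U,t)$, I construct the graph $G$ of Figure~\ref{fig:hitting_set_reduction}: an $\alpha$-loop; $A=\cS$; $C=U\cup\{\beta\}$ for a fresh $\beta$; a $B$-vertex $b_{S_iu_j}:[S_i,u_j]$ whenever $u_j\in S_i$; a $B$-vertex $b_{S_i\beta}:[S_i,\beta]$ for each $S_i$; and all the required edges to/from $\alpha$ to make $G$ essential and satisfy the structure property. For each $u\in U$ I attach a weight widget $w_u=\weight[\{S_i:u\in S_i\},\{u\}]$ with a large even parameter $K$. The target $\ell$ is chosen exactly as in~\cite{SAP} so that the achievable vertex reduction equals $\ell$ iff there is a hitting set of size $\leq t$; concretely, $\ell$ accounts for $K$ vertices per activated widget plus the collapse of a $\hit(H)$'s worth of $b_{S_i\beta}$-vertices.

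For the forward direction, given a hitting set $H$ with $|H|=t$, I build the reduction as a sequence of amalgamations (which is a 1-block code). For each $u\in H$, all vertices $b_{S_iu}$ share out-neighbor $u$ and have disjoint in-neighbors, so they can be amalgamated into a single vertex $v_u:[\{S_i:u\in S_i\},u]$; by Lemma~\ref{lemma:weight_lemma}(1) this activates $w_u$, so the $K$ vertices of $B_{w_u}$ can be merged into $v_u$ as well. Since $H$ hits every $S_i$, after these activations the remaining $b_{S_i\beta}$-vertices, sharing out-neighbor $\beta$, can be amalgamated according to $\hit(H)$. Counting gives $|V_G|-|V_H|=\ell$.

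The backward direction is the main obstacle and is where the general 1-block machinery becomes essential. Suppose $\Phi_\infty:X_G\to X_H$ is a 1-block conjugacy with $|V_H|=|V_G|-\ell$. By Lemma~\ref{lemma:structure_lemma}, $\phiv$ is injective off of $B$, so the entire reduction comes from $B$-fibers of size $>1$. Any such fiber must consist of $b$-vertices sharing a common in- or out-neighbor (else the induced image would not be a valid vertex of a shift satisfying the structure property), so each nontrivial fiber is indexed either by a single $S_i\in A$ or by a single element of $U\cup\{\beta\}$. Lemma~\ref{lemma:no_w_i_left_behind} combined with Lemma~\ref{lemma:weight_lemma}(2) forces the widget $w_u$ to be activated whenever any vertex of $B_{w_u}$ lies in a nontrivial fiber, and activation contributes exactly $K$ to the reduction. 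Letting $H\subseteq U$ be the set of $u$ with $w_u$ activated, the non-widget contribution to $\ell$ is bounded by $|\hit(H)|-|H|$ (because $H$ is what determines which $b_{S_i\beta}$ and $b_{S_iu}$ can be pooled). Lemma~\ref{lemma:hitH-H} then yields $|H|\leq t$ whenever the target $\ell$ is met, and since reaching $\ell$ also forces $\hit(H)=\cS$, $H$ is the desired hitting set. Thus $(\cS,U,t)\in\Hit$ iff $(G,\ell)\in\OBRv$, completing the reduction.
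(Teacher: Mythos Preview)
Your construction has a genuine gap: you include only the widgets $w_u=\weight[\hit(u),\{u\}]$ for $u\in U$, but the reduction in~\cite{SAP} (and in this paper) also attaches a widget $\weight[S_i,\{s,\beta\}]$ for every pair $(s,S_i)$ with $s\in S_i$. That second family is what creates the tension encoding \Hit. With only the $w_u$'s, nothing prevents you from activating \emph{all} of them regardless of the hitting-set size: for every $u\in U$, merge $\{b_{S_i u}:S_i\in\hit(u)\}$ into a single vertex $v_u:[\hit(u),\{u\}]$ and then absorb $B_{w_u}$. This activates all $n$ widgets and yields a reduction of roughly $nK$ independent of $t$, so your instance does not separate yes- from no-instances. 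In the correct construction, each $b_{S_is}$ is needed both to activate $\weight[\hit(s),\{s\}]$ (by merging with the other $b_{S_js}$) and to activate $\weight[S_i,\{s,\beta\}]$ (by merging with $b_{S_i\beta}$), and it cannot do both; elements $s\notin H$ activate one widget of the first kind, while elements $s\in H$ are spent covering the $S_i$'s via the second kind, giving $m+(n-|H|)$ activated widgets and target $\ell=(m+n-t)K$.

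Your backward direction also rests on an unjustified claim: that every nontrivial $B$-fiber ``must consist of $b$-vertices sharing a common in- or out-neighbor.'' That is exactly the sort of statement that holds for a single amalgamation but can fail for a general 1-block code, and the paper does not argue this way. Instead it uses Lemma~\ref{lemma:weight_lemma}(2) directly: if a widget $w$ is not activated, then every $b_i^w$ with $i>1$ sits in a singleton fiber, so the total reduction is at most $(|F|+|\overline H|)K$ plus a sub-$K$ slack, where $F$ is the set of activated $S_i$-widgets and $\overline H$ the set of activated $u$-widgets. One then sets $H=U\setminus\overline H$, observes $F\subseteq\hit(H)$, and applies Lemma~\ref{lemma:hitH-H} (in its contrapositive form, bounding $|\hit(H)|-|H|<m-t$ when no size-$t$ hitting set exists) to conclude the reduction falls short of $(m+n-t)K$. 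Your invocation of Lemma~\ref{lemma:hitH-H} to get ``$|H|\leq t$'' reverses the direction of that inequality and does not match how the lemma is stated.
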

\begin{proof}
  First we show \OBRv is in \NP.
  Given a vertex shift $X_G$ and $\phiv:V_G\to\{1,2,\ldots, |V_G|-n\}$ from a proposed 1-block conjugacy $\Phi_\infty$, we construct the minimal image graph $G'$ such that $\Phi_\infty:X_G\to X_{G'}$ is well-defined.
  In particular, $V_{G'}=\{\phiv^{-1}(u):u\in V_G\}$ and $E_{G'}=\{(\phiv^{-1}(v_1),\phiv^{-1}(v_2)):(v_1,v_2)\in E_G\}$.
  By Corollary~\ref{cor:verify_conjugacy_reducible}, we can determine if $\Phi_\infty$ is a conjugacy in $O(|V_G|^4)$ time.
  
  To show hardness, we reduce from \Hit; let $\cS=\{S_1,\ldots, S_m\}$ be the collection of sets and $t$ the given integer.
  Defining $n=|U|$ for $U=\bigcup_i S_i$, we set the parameter $K=5mn$ for the weight widgets.
  Then, as in~\cite{SAP}, we build the following graph $G=(V_G,E_G)$ with the structure property $V_G=A\cup B\cup C\cup \{\alpha\}$.
  \begin{enumerate}
    \item Start with $A=\cS,B=\emptyset, C=U\cup \{\beta\}$, where $\beta$ is a new vertex.
    \item For each $u\in A,v\in C$, add $b_{uv}:[u,v]$. That is, add the vertex $b_{S_is}$ and path $S_i\to b_{S_is}\to s$ for every $(s,S_i)$ with $s\in S_i$ as well as the vertex $b_{S_i\beta}$ and path $S_i\to b_{S_i\beta}\to \beta$ for every $S_i\in\cS$.
    \item For each $(s,S_i)$ with $s\in S_i$, add the weight widget $w=\weight[S_i,\{s,\beta\}]=(A_w,B_w,C_w)$. Note that $A_w$ will added to $A$, $B_w$ to $B$, and $C_w$ to $C$.
    \item For each $s\in U$, add the weight widget $\weight[\hit(s),\{s\}]$.
    \item Finally, add the vertex $\alpha$ and the necessary edges for $G$ to have the structure property, i.e., add the edges $\{(a,\alpha),(\alpha,a):a\in A\}\cup\{(b,\alpha),(\alpha,b):b\in B\}\cup \{(v,v):v\in A\cup B\cup\{\alpha\}\}$.
  \end{enumerate}
  Summarizing, if $W$ is the collection of weight widgets added, $A=\cS\cup \bigcup_{w\in W}A_w$, $B=\{b_{S_is}:S_i\in \cS, s\in S_i\}\cup \{b_{S_i\beta}:S_i\in \cS\}\cup \bigcup_{w\in W}B_w$, and $C=U\cup \{\beta\}\cup \bigcup_{w\in W}C_w$.
  (See Figure~\ref{fig:hitting_set_reduction} for an example with $\cS=\{\{u_1,u_2\},\{u_2,u_3\}\}$ where only steps (1), (2), and (5) have been performed.)
  
  We will show there is a hitting set of size $t$ if and only if there is a 1-block conjugacy $\Phi_\infty:X_G\to X_{G'}$ such that $|V_{G'}|\leq|V_G|-(m+n-t)K$.
  The idea behind the reduction is that $s$ can either choose to be in the hitting set by combining some $b_{S_is}$ with the appropriate $b_{S_i\beta}$ to activate some of the $\weight[S_i,\{s,\beta\}]$, or choose not to be in the hitting set by combining all $b_{S_is}$ for $S_i\in \hit(s)$ to activate $\weight[\hit(s),\{s\}]$.
  We will be able to activate $|\hit(H)|+|U\setminus H|=m+(n-t)$ weight widgets if there is a hitting set of size $t$ and strictly fewer if no such set exists.
  By our choice of $K$, any reduction in the number of vertices not caused by activating weight widgets will be insignificant.
  
  First, suppose there is a hitting set $H$ for $\cS$ of size $t$.
  We will give a sequence of $(m+n-t)K$ consecutive amalgamations, which together constitute a 1-block reducing the number of vertices by $(m+n-t)K$.
  For each $S_i\in \cS$, pick some $s\in H$ such that $S_i\in \hit(s)$. After amalgamating $b_{S_is}$ with $b_{S_i\beta}$, the weight widget $w=\weight[S_i,\{s,\beta\}]$ can be activated and $B_w$ amalgamated sequentially.
  Doing this for each $S_i$ gives a total of $m(K+1)\geq mK$ consecutive amalgamations.
  As the above amalgamations only affected the vertices in $B$ associated with $H$, next consider any $s\in U\setminus H$.
  We can amalgamate the vertices $\{b_{S_is}:S_i\in \hit(s)\}$ in any order to form $b_{\hit(s)s}:[\hit(s),\{s\}]$ which can then activate $\weight[\hit(s),\{s\}]$.
  Amalgamating all the vertices in this weight widgets give a total of at least $K$ amalgamations for each vertex $s\in U\setminus H$.
  Thus we can perform $mK+(n-t)K=(m+n-t)K$ consecutive amalgamations, so there is a 1-block conjugacy $\Phi_\infty:X_G\to X_{G'}$ such that $|V_G|\geq |V_{G'}|-(m+n-t)K$.
  
  Next suppose there is no hitting set $H$ of size $t$.
  Let $\Phi:X_G\to X_{G'}$ be a 1-block conjugacy such that $N=|V_G|-|V_{G'}|$ is as large as possible.
  Define
  \begin{align*}
    \overline{H}&=\{s\in U:\weight[\hit(s),\{s\}]\text{ is activated} \}, \\
    F&=\{S_i:\weight[S_i,\{s,\beta\}]\text{ is activated for some }s\in S_i \}, \\
    H&=U\setminus \overline{H}.
  \end{align*}
  Note that there is a single path in $G$ from $S_i$ to $s$, through the vertex $b_{S_is}$, which is required to activate both $\weight[\hit(s),\{s\}]$ and $\weight[S_i,\{s,\beta\}]$.
  Thus for every $b_{S_is}$ we have that if $S_i\in F$, then $s\in H$. That is,
  \begin{align}\label{eq:F_subset_ellipsis}F\subseteq \{S_i:s\in H \text{ for some }b_{S_is} \}. \end{align}
  We now count how much smaller $|V_{G'}|$ could be than $|V_G|$.
  By construction, each activated widget can lead to reducing the number of vertices by at most $K$.
  Let $B_{\text{non-weight}}=\{b_{S_is}:S_i\in\cS,s\in S_i \}\cup \{b_{S_i\beta}:S_i\in \cS \}$ be the vertices in $B$ not in weight widgets.
  By Lemma~\ref{lemma:weight_lemma}, if $u\in\Phi^{-1}(v)$ with $|\Phi^{-1}(v)|>1$ for some $u$ not in an activated widget, then $u\in B_\text{non-weight}\cup \bigcup_{w\in W}w_1$.
  Thus $V_G$ can be reduced by at most $(|F|+|\overline{H}|)K+|B_\text{non-weight}|+|W|.$
  Since \[|B_\text{non-weight}|+|W|=(mn+m)+(mn+n)<K, \] we have
  \begin{align*}
    N &\leq (|F|+|\overline{H}|)K+|B_\text{non-weight}|+|W| \\
    &< (|F|+|\overline{H}|)K+K \\
    &\leq (|\hit(H)|+(n-|H|)+1)K &\text{(by (\ref{eq:F_subset_ellipsis}) and $H=U\setminus\overline{H}$)}\\
    &\leq (m+n-t)K &\text{(by Lemma \ref{lemma:hitH-H})}.
  \end{align*}
\end{proof}

\section{Edge Shifts}
\label{sec:edge-shifts}

Thus far we have restricted our attention to vertex shifts, rather than edge shifts, though the latter are perhaps more commonly used in the literature.
For various reasons, the problems we consider are in general more appropriate for vertex shifts, as we discuss in the following section.
(Vertex shifts are also motivated by applications (\S~\ref{sec:discussion}).)
Nonetheless, we now give some results for edge shifts, for the first two problems: verifying $k$-block conjugacies, and testing pairs of shifts for conjugacy.
(The third problem remains open.)

In the following, we will leverage our results for vertex shifts, using the standard conversion from edge shifts to vertex shifts: edges become vertices, and pairs of adjacent edges become edges~\cite[Proposition 2.3.9]{lind1999introduction}.
More formally, we recall that given edge shift $X_G^e$, its vertex shift representation is the shift $X_{G'}$ where $V_{G'}=E_{G'}$ and $E_{G'}=\{(e_i,e_j):e_ie_j\text{ is a word in $X_G^e$}\}$.
Thus, for any edge shifts $X_G^e,X_H^e$, there exists a $k$-block conjugacy $\Phi_\infty:X_G^e\to X_H^e$ if and only if there exists a $k$-block conjugacy $\Phi'_\infty:X_{G'}\to X_{H'}$ between the vertex shift representations of $X_G$ and $X_H$.

First, we observe that our verification algorithm for vertex shifts immediately applies to edge shifts.

\begin{theorem}
  \label{thm:edge-verification}
  Given directed multigraphs $G,H$ and a proposed $k$-block code $\Phi_\infty:X_G^e\to X_H^e$, deciding if $\Phi_\infty$ is a conjugacy can be determined in $O(|E_G|^{4k})$.
\end{theorem}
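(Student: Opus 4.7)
The plan is to reduce edge-shift verification to vertex-shift verification via the standard conversion, then invoke Corollary~\ref{cor:verify_conjugacy_reducible}. Given the input $(G, H, \Phi_\infty)$, first I would construct the vertex-shift representations $G', H'$ of $X_G^e, X_H^e$, where $V_{G'} = E_G$ and $(e_i, e_j) \in E_{G'}$ precisely when the terminal vertex of $e_i$ equals the initial vertex of $e_j$ (and similarly for $H'$). This construction can be done in $O(|E_G|^2 + |E_H|^2)$ time by a single scan over pairs of edges, which is dominated by the runtime we are targeting.

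Next I would translate the proposed $k$-block code. Observing that a block of length $k$ in $X_G^e$ is exactly a walk $e_1e_2\cdots e_k$ of length $k$ in $G$, which in turn is exactly a block of length $k$ in the vertex shift $X_{G'}$, the block map $\Phi: \cB_k(X_G^e) \to \cB_1(X_H^e) = E_H = V_{H'}$ is literally the same function when viewed as $\Phi': \cB_k(X_{G'}) \to V_{H'}$. By the observation recalled just before the theorem statement, $\Phi_\infty: X_G^e \to X_H^e$ is a $k$-block conjugacy if and only if $\Phi'_\infty: X_{G'} \to X_{H'}$ is a $k$-block conjugacy between the vertex-shift representations.

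Now I would invoke Corollary~\ref{cor:verify_conjugacy_reducible} on $(G', H', \Phi')$. Since $|V_{G'}| = |E_G|$, this runs in $O(|V_{G'}|^{4k}) = O(|E_G|^{4k})$ time, which absorbs the construction cost of $G', H'$ and the translation of $\Phi$. The result follows.

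There is essentially no obstacle here: the only subtlety worth flagging is the one noted in the introduction, namely that the $k$-block map $\Phi$ itself has description size up to $|E_G|^k$ (the number of length-$k$ walks in $G$), which can be exponential in the description size of $G$; nevertheless, the runtime $O(|E_G|^{4k})$ is polynomial in the size of this input, so the algorithm is polynomial-time in the usual sense. No new ideas beyond Corollary~\ref{cor:verify_conjugacy_reducible} and the edge-to-vertex conversion are required.
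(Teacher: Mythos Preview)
Your proposal is correct and follows essentially the same approach as the paper: convert to the vertex-shift representations $G',H'$ with $|V_{G'}|=|E_G|$, translate the block map, and invoke Corollary~\ref{cor:verify_conjugacy_reducible} to get the $O(|E_G|^{4k})$ bound. Your write-up simply adds a bit more detail on the construction cost and the representation-size subtlety than the paper's terse proof does.
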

\begin{proof}
  Given edge shifts $X_G^e,X_H^e$, we first construct their vertex shift representations $X_{G'},X_{H'}$ as above.
  Letting $\Phi'_\infty$ be the corresponding block code between the vertex shifts, by Corollary~\ref{cor:verify_conjugacy_reducible}, we can determine if $\Phi'_\infty$ is a conjugacy in $O(|V_{G'}|^{4k})=O(|E_G|^{4k})$ time.
\end{proof}

We now turn to the $k$-block conjugacy problem for edge shifts, where we again show \GI-hardness.

\begin{definition}
  Given directed mult-graphs $G,H$, the \emph{$k$-Block Conjugacy Problem}, denoted \KBCe, is to decide is there is a $k$-block conjugacy $\Phi_\infty: X_G^e \to X_H^e$ between the edge shifts $X_G^e,X_H^e$.
\end{definition}

\begin{figure}
  \centering
  \begin{tikzpicture}[scale=1, decoration=snake]
  \node at (-1,0) {(a)};
  \node (e1) at (0,0) {$v$};
  \node (e2) at (1,0) {$v'$};
  \node (ain) at (3,0) {$v$};
  \node (a1) at (4,0) {$\bullet$};
  \node (adots) at (5.05,0) {$\cdots$};
  \node (an1) at (6.3,0) {$\bullet$};
  \node (an) at (7.4,0) {$\bullet$};
  \node (anplus1) at (8.4,0) {$\bullet$};
  \node (aout) at (9.4,0) {$v'$};
  
  \draw[->] (e1) edge node[above]{$e$} (e2);
  
  \draw[->] (ain) edge node[above]{$e_{\text{in}}$} (a1) (a1) edge node[above]{$e_1$} (adots) (adots) edge node[above]{$e_{k-2}$} (an1) (an1) edge node[above]{$e_{k-1}$} (an) (an) edge[bend left] node[above]{$e_{k}^t$} (anplus1) (an) edge[bend right] node[below]{$e_{k}^b$} (anplus1) (anplus1) edge node[above]{$e_\text{out}$} (aout);
  
  \node at (-1,-3) {(b)};
  \node (f1) at (0,-3) {$u$};
  \node (f2) at (1,-3) {$u'$};
  \node (bin) at (3,-3) {$u$};
  \node (b1) at (4,-3) {$\bullet$};
  \node (bdotst) at (5,-2) {$\bullet$};
  \node (bdotsb) at (5,-4) {$\bullet$};
  \node (bn1t) at (6.2,-2) {$\cdots$};
  \node (bn1b) at (6.2,-4) {$\cdots$};
  \node (bnt) at (7.4,-2) {$\bullet$};
  \node (bnb) at (7.4,-4) {$\bullet$};
  \node (bout) at (8.4,-3) {$\bullet$};
  \node (boutplus1) at (9.4,-3) {$u'$};
  
  \draw[->] (f1) edge node[above]{$f$} (f2);
  \draw[->] (bin) edge node[above]{$f_\text{in}$} (b1) (b1) edge node[above left]{$f_1^t$} (bdotst) (bdotst) edge node[above]{$f_2^t$} (bn1t) (bn1t) edge node[above]{$f_{k-1}^t$} (bnt) (bnt) edge node[above right]{$f_{k}^t$} (bout) (b1) edge node[below left]{$f_1^b$} (bdotsb) (bdotsb) edge node[above]{$f_2^b$} (bn1b) (bn1b) edge node[above]{$f_{k-1}^b$} (bnb) (bnb) edge node[below right]{$f_{k}^b$} (bout) (bout) edge node[above]{$f_\text{out}$} (boutplus1);
  
  \node (swarrow) at (1.4,-3) {};
  \node (searrow) at (2.55,-3) {};
  \draw[->] (swarrow) edge[decorate] (searrow);
  
  \node (nwarrow) at (1.4,0) {};
  \node (nearrow) at (2.55,0) {};
  \draw[->] (nwarrow) edge[decorate] (nearrow);
  \end{tikzpicture}
  
  \caption{(a) The edge gadget for each pre-image graph. (b) The edge gadget for each image graph.}
  \label{fig:edge_gadget}
\end{figure}
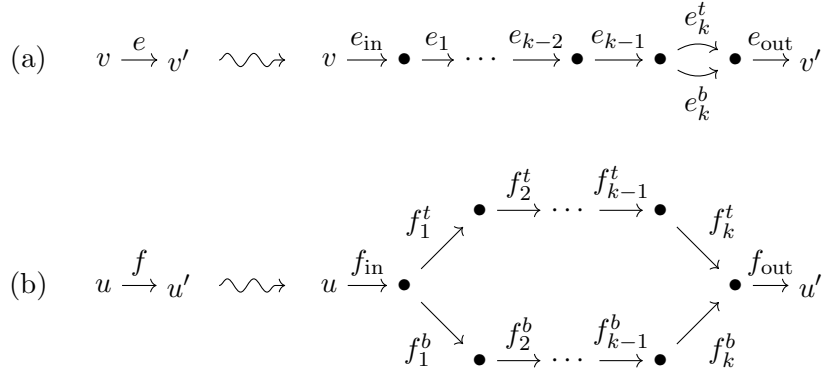

\begin{theorem}
  \KBCe is \GI-hard.
\end{theorem}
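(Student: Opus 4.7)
The plan is to mirror the vertex-shift arguments in Theorems~\ref{thm:1block_v_GI-hard} and~\ref{thm:1-block_iff_k-block}: first establish $\GI$-hardness for the base case $\BCe{1}$ via a reduction from directed multigraph isomorphism, then reduce $\BCe{1}$ to $\KBCe$ using the edge gadget of Figure~\ref{fig:edge_gadget}.

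For the base case, given strongly connected directed multigraphs $G, H$ (isomorphism of which is $\GI$-hard), the claim is that $X_G^e$ and $X_H^e$ admit a 1-block conjugacy if and only if $G \cong H$ as multigraphs. The forward direction is immediate: an isomorphism yields a bijection on edges preserving the ``followable'' relation. For the converse, given a 1-block conjugacy $\Phi : E_G \to E_H$, define $\psi : V_G \to V_H$ by setting $\psi(v)$ equal to the common head of $\Phi(e)$ over all edges $e$ ending at $v$. This is well-defined: for any two such $e, e'$ and any edge $e''$ leaving $v$ (which exists by essentiality), both $ee''$ and $e'e''$ are legal words, so $\Phi(e)$ and $\Phi(e')$ must share the head given by the initial vertex of $\Phi(e'')$. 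The symmetric argument on outgoing edges yields the same map, and $\psi$ is bijective by counting since $\Phi$ is bijective on edges; preservation of the multiedge structure is then immediate from the fact that $\Phi$ respects followability and $\psi$ agrees with it on both heads and tails.

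For the reduction $\BCe{1} \to \KBCe$, construct $G', H'$ by replacing each edge of $G$ and $H$ with its gadget from Figure~\ref{fig:edge_gadget}. The forward direction (lifting a 1-block conjugacy $\Phi$ on the original edge shifts to a $k$-block conjugacy $\Phi'$ on the gadget shifts) is handled by a ``stretching'' construction analogous to Lemma~\ref{lemma:pass_to_higher_expansion}, with $\Phi'$ defined on each $k$-window of edges in a preimage gadget so as to land on the corresponding edge of the image gadget indexed by $\Phi$. The converse hinges on an alignment lemma analogous to Lemma~\ref{lemma:k-block_conj_format}: any $k$-block conjugacy $\Phi'_\infty : X_{G'}^e \to X_{H'}^e$ must map the initial $k$-window $e_{in} e_1 \cdots e_{k-1}$ of each preimage gadget to some $f_{in}$ of an image gadget. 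Given this alignment, projecting $\Phi'$ edge-by-edge across gadget boundaries yields a well-defined 1-block conjugacy $\Phi$ on the original edge shifts.

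The main obstacle is proving the alignment lemma. The rigidity comes from the parallel pair $e_k^t, e_k^b$ in the preimage gadget paired with the two parallel paths in the image gadget: any misaligned window forces $\Phi'$ to collapse a diamond spanning two consecutive gadgets, since the two possible preimages (through $e_k^t$ or through $e_k^b$) of any single image-path stretch must be identified, after which the single edge $f_{out}$ (or $f_{in}$ of the next gadget) funnels both traversals to the same image word. Lemma~\ref{lemma:diamond_lemma} then gives non-injectivity of $\Phi'_\infty$, contradicting the assumption that it is a conjugacy. The two cases (misalignment to a top/bottom edge at an interior position, versus misalignment to $f_{out}$) parallel those in the proof of Lemma~\ref{lemma:k-block_conj_format}, adapted straightforwardly to the edge-shift setting.
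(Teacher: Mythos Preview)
Your base case has a genuine gap. The claim ``$X_G^e$ and $X_H^e$ admit a 1-block conjugacy if and only if $G \cong H$'' is false without the assumption $|E_G|=|E_H|$: an edge amalgamation yields a 1-block conjugacy between non-isomorphic multigraphs. Relatedly, you assert $\Phi$ is bijective on edges, but a 1-block conjugacy only gives $\Phi_\infty$ bijective on points; bijectivity of the block map $\Phi$ on symbols requires $|E_G|=|E_H|$. Finally, even granting that $\Phi$ is a bijection on edges, ``$\psi$ is bijective by counting'' is not justified---edge-bijectivity does not by itself force $|V_G|=|V_H|$. The paper fixes this by assuming $|E_G|=|E_H|$ from the outset (harmless for a $\GI$-reduction) and then arguing that $\Phi^{-1}_\infty$ is also a 1-block code, hence induces a surjective vertex map $V_H\to V_G$; two surjections in opposite directions between finite sets forces both to be bijections.

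For the reduction $\BCe{1}\to\KBCe$, your approach is correct but genuinely different from the paper's. You propose to re-prove the alignment lemma and the stretching argument directly in the edge-shift category. The paper instead observes that applying the edge gadget (Figure~\ref{fig:edge_gadget}) to $G$ and then passing to the vertex-shift representation yields exactly the same graph as first passing to the vertex-shift representation $G'$ and then applying the vertex gadget (Figure~\ref{fig:vertex_gadget}). This lets the paper invoke Theorem~\ref{thm:1-block_iff_k-block} as a black box on $G',H',\hat G',\hat H'$, together with the standard fact that $k$-block conjugacy of edge shifts is equivalent to $k$-block conjugacy of their vertex-shift representations. Your route works but duplicates the diamond-collapsing casework; the paper's route is shorter and reuses machinery already established.
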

\begin{proof}
  We first show that \BCe{1} is \GI-hard.
  Given directed graphs $G,H$ with $|E_G|=|E_H|$, as in the vertex shift case, we will argue that there exists a 1-block conjugacy between the edge shifts if and only if the graphs are isomorphic.
  Suppose first that $G,H$ are isomorphic.
  Let $G',H'$ be the directed graphs for vertex shifts, as described above, so that $X_{G'}=X_G^e$ and $X_{H'}=X_H^e$.
  Since $G,H$ are isomorphic and $G',H'$ are created by the same (deterministic) procedure, $G',H'$ are isomorphic.
  By Theorem~\ref{thm:1block_v_GI-hard}, there exists a 1-block conjugacy $\Phi_\infty:X_{G'}\to X_{H'}$, so $X_{G'}=X_G^e$ is conjugate to $X_{H'}=X_H^e$ via a 1-block code.
  
  Now suppose $\Phi_\infty:X_G^e\to X_H^e$ is a 1-block conjugacy. 
  Noting that $\Phi:E_G\to E_H$ is a map on edges, we show that $\Phi$ can be realized as a map on $V_G$.
  To do this, it suffices to show (i) for any two edges $(v_1,v_2),(v_1,v_3)$ starting at the same vertex, $\Phi((v_1,v_2)),\Phi((v_1,v_3))$ also start at the same vertex and (ii) for any two edges $(u_1,u_2),(u_3,u_2)$ ending at the same vertex, $\Phi((u_1,u_2)),\Phi((u_3,u_2))$ also end at the same vertex.
  To see condition (i), consider any $(v_4,v_1)\in E_G$.
  As $\Phi((v_4,v_1)(v_1,v_2)),\Phi((v_4,v_1)(v_1,v_3))$ must both be words in $X_H^e$, we must have that $\Phi((v_1,v_2)),\Phi((v_1,v_3))$ both start at the same vertex.
  Similarly, for condition (ii), consider any $(u_2,u_4)\in E_G$, and note that $\Phi((u_1,u_2)(u_2,u_4)),\Phi((u_3,u_2)(u_2,u_4))$ are both words in $X_H^e$, so $\Phi(u_1,u_2),\Phi(u_3,u_2)$ must end at the same vertex.
  Thus $\Phi$ can be realized as a map $\Psi:V_G\to V_H$ on vertices which is surjective and preserves the edge/non-edge relation.
  To show $\Psi$ is actually a graph isomorphism, consider the inverse $\Phi^{-1}_\infty$.
  Since $\Phi_\infty$ is 1-block conjugacy and $|E_G|=|E_H|$, $\Phi^{-1}_\infty$ is a 1-block code.
  Again, $\Phi^{-1}_\infty$ can be realized as a surjective vertex map $\Psi'$ which preserves the edge/non-edge relation.
  Since both $\Psi:V_G\to V_H$ and $\Psi':V_H\to V_G$ are surjective maps between finite sets, we actually have $\Psi,\Psi'$ are bijections.
  Thus $\Psi$ is a graph isomorphism from $G$ to $H$.

  We now reduce \KBCe to \BCe{1}, as we did with vertex shifts.
  Given edge shifts $X_G^e,X_H^e$, construct $\hat{G},\hat{H}$ as follows.
  To form $\hat{G}$, substitute each edge in $G$ with a path of length $k$ followed by two parallel edges and a final edge (Figure~\ref{fig:edge_gadget}a).
  Construct $\hat{H}$ by substituting each edge in $H$ with a single edge followed by two parallel paths of length $k$ followed by a single edge (Figure~\ref{fig:edge_gadget}b).
  Then construct the vertex shift representations $G',H',\hat{G}',\hat{H}'$ of $G,H,\hat{G},\hat{H}$.
  By construction of the edge gadget, $\hat{G}',\hat{H}'$ can be formed from $G',H'$ by using the vertex gadget in Figure~\ref{fig:vertex_gadget}.
  Thus by Theorem~\ref{thm:1-block_iff_k-block}, there exists a 1-block conjugacy $\Phi'_\infty:X_{G'}\to X_{H'}$ if and only if there exists a $k$-block conjugacy $\hat{\Phi}'_\infty:X_{\hat{G}'}\to X_{\hat{H}'}$.
  Since there exists a $k$-block conjugacy $\Phi_\infty:X_{G}^e\to X_{H}^e$ between edge shifts if and only if there exists a $k$-block conjugacy $\Phi'_\infty:X_{G'}\to X_{H'}$ between the vertex representations, there exists a 1-block conjugacy $\Phi_\infty: X_G\to X_H$ if and only if there exists a $k$-block conjugacy $\hat{\Phi}:X_{\hat{G}}\to X_{\hat{H}}$.
  Thus \KBCe is \OBCe-hard and, in particular, \GI-hard.
\end{proof}

\begin{table}
	\centering
 
	\begin{tabular}{@{}lcccccccc@{}}\toprule
      & Block size & Verification ($G,H,\Phi$) & Conjugacy ($G,H$) & Reduction ($G,\ell$)\\ 
      \cmidrule{2-5}
      \multirow{2}{*}{Vertex}
        & $k=1$ & \BVv{1}: \P & \BCv{1}: \GI-hard, \NP &  \BRv{1}:  \NP-complete\phantom{??} \\
        & $k>1$ & \BVv{k}: \P & \BCv{k}: \GI-hard, \NP &  \BRv{k}:  \NP-complete?? \\[8pt]
      \multirow{2}{*}{Edge}                                               
        & $k=1$ & \BVe{1}: \P & \BCe{1}: \GI-hard, \textsf{NP}* & \BRe{1}:   \NP-complete?? \\
        & $k>1$ & \BVe{k}: \P & \BCe{k}: \GI-hard, \textsf{NP}* & \BRe{k}:   \NP-complete?? \\
  
		\bottomrule
	\end{tabular}
	\caption{Summary of results and open questions, for vertex and edge shifts.  Question marks denote conjectures, and $\mathsf{BV}$ refers to the verification problem (\S~\ref{sec:verification}).  The asterisk (*) denotes a subtlety in edge shift representations: the $k$-block conjugacy problem is in \NP when the the representation size is considered to be the number of edges (i.e., a unary representation), but membership in \NP is not clear when the shift is given as an adjacency matrix (i.e., a binary representation).}
	\label{table:complexity_table}
\end{table}

\section{Discussion}
\label{sec:discussion}

We have addressed several variants of the conjugacy problem restricted to $k$-block codes, with new algorithms to verify a proposed conjugacy, and hardness results for $k$-block conjugacy and representation reduction via 1-block codes (Table~\ref{table:complexity_table}).
Below we discuss subteties of input representation, followed by applications and open problems.

\paragraph{Representations of SFTs.}
When considering how to describe a subshift of finite type (SFT), three representations come to mind: a vertex shift, an edge shift, and a list of forbidden words $\cF$.
As our results pertain to vertex and edge shifts, we now discuss some nuances in these two representations, leaving lists of forbidden words to future work.

Perhaps the central advantage of edge shifts over vertex shifts is their compact representation size: a shift on $n$ symbols can be represented in size as small as $O(\log n)$ by writing the multi-graph as a integer adjacency matrix, as opposed to $\Omega(n)$ for vertex shifts.
This compact representation size can have important implications on the computational complexity.
In the verification problem, for example, writing down a $k$-block code $\Phi$ na\"ively takes $\Omega(n) = \Omega(|E_G|)$ space, which can be exponential in the size of the graphs $G,H$.
(One can improve this by encoding $\Phi$ as a integer $|V_G|\times|V_G|\times|E_H|$ tensor, specifying how many $(u,v)\in E_G$ edges map to a given $e\in E_H$, but this can still be exponential.)
Thus, while our algorithm remains polynomial-time, it would not be for cases allowing a compact representation of $\Phi$.

Similarly, for the conjugacy problem, we only know \KBCe to be in \NP if we consider the graphs $G,H$ to be represented in adjacency list form, which takes $\Omega(|E_G|)$ space, rather than the typically more compact integer adjacency matrix form taking $O(|V_G|\log|E_G|)$ space, as the natural certificate is the block map $\Phi$ witnessing the conjugacy.
For the matrix representation of edge shifts, membership in \NP would require a certificate exponentially smaller than the na\"ive representation of the block map $\Phi$.

Finally, what ``size reduction'' means for edge shifts depends on the choice of adjacency list or matrix above.
For the adjacency list, we have that the problem of reducing the number of vertices in the graph is in \NP, but it is less motivated, as the size is dominated by $|E_G|$, not $|V_G|$.
On the other hand, while the adjacency matrix representation size is dominated by $|V_G|$, it is not clear whether the problem of reducing the number of vertices is in \NP, for the same reason as above.

\paragraph{Motivation from Markov partitions.}
As noted in~\cite{SAP}, variants of the conjugacy problem for vertex shifts have applications in simplifying Markov partitions, a tool to study discrete-time dynamical systems via symbolic dynamics.
Briefly, a Markov partition is a collection $C$ of regions of the phase space, satisfying certain properties, which induces a conjugacy to a vertex shift $X_G$ where $G=(C,E)$, i.e., the vertices are labeled with the regions of the phase space.
In applications, one can encounter Markov partitions with thousands of regions, thus motivating the problem of simplifying the partition.
Without additional information about the dynamical system, essentially the only way to do this while preserving the relevant geometric information is to \emph{coarsen} the partition, by replacing sets of regions with a single region which is their union.
This operation is exactly a 1-block code.
Our results therefore give an efficient algorithm to test whether a proposed coarsening (1-block code) is valid (yields a conjugacy).
Our results also imply that the problem of minimizing the partition size is \NP-complete.
(Previous work~\cite{SAP} only showed the latter for the case where the 1-block code was a sequence of amalgamations.)

\paragraph{Open problems.}
Our work leaves several open problems, such as those implied by Table~\ref{table:complexity_table}:
resolving the complexity of the $k$-block conjugacy problem, and showing \NP-hardness of the size reduction problem.
The complexity of deciding $k$-block conjugacy between edge shifts represented as integer matrices is especially interesting, as membership in \NP is perhaps unlikely (see above).
Regarding the $k$-block conjugacy problem and resolving where it lies on the spectrum between \GI-complete and \NP-complete, we conjecture that, similar to the induced subgraph isomorphism problem~\cite{SYSLO198291}, it is \GI-complete when $|V_G|=|V_H|$ and \NP-complete when $|V_G|-|V_H|$ is large enough.
Beyond these questions, it would be interesting to address the complexity of $k$-block conjugacy between SFTs given as lists of forbidden words, and the natural variants of the problem for that input (for example, reducing the representation size of the list).

\subsection*{Acknowledgments}
We thank Luke Meszar for valuable contributions to the early stages of this work.
We also thank Mike Boyle, Josh Grochow, Doug Lind, and Brian Marcus, for several helpful conversations, references, and insights.

\appendix

\section{Algorithms}

\vspace{0mm} 

\begin{algorithm}[H]\label{alg:phi_c_injective} 
  \SetAlgoLined
  \SetKwFunction{GetStronglyConnectedComponents}{GetStronglyConnectedComponents}
  \SetKwFunction{IsInjective}{IsInjective}
  \SetKw{Continue}{continue}
  \SetKwProg{Function}{Function}{:}{}
  \Function{\IsInjective{$G,H,\Phi$}}{
  \KwIn{irreducible graphs $G,H$ and a 1-block code $\Phi$}
  \KwOut{true, if $\phic:\bigcup_n C_n(G)\to \bigcup_n C_n(H)$ is injective; false, otherwise}
  \BlankLine
  \tcc{Construct the meta-graph $M$}
  $V_M\leftarrow V_G\times V_G$\;
  $E_M\leftarrow \{((v_1,v_2),(u_1,u_2)):\Phi(v_1)=\Phi(u_1),\Phi(v_2)=\Phi(u_2),\text{ and }(v_1,u_1),(v_2,u_2)\in E_G\}$\;
  \BlankLine
  \tcc{Decide if $M$ has a cycle passing through $(v_1,v_2)$ with $v_1\neq v_2$}
  $\mathcal{S} \leftarrow$ \GetStronglyConnectedComponents{$M$} \tcc*{Tarjan's}
  \ForEach{subgraph s in $\mathcal{S}$}
  {
  	\If{s is a singleton}{\Continue\;}
    \ForEach{vertex $(v_1,v_2)$ in s}
    {
      \If{$v_1\neq v_2$}{\Return{true}\;}
    }
  }
  \Return{false}\;
}
  \caption{Determine if $\phic$ is injective}
\end{algorithm}

\vspace{0mm}

\begin{algorithm}[H]\label{alg:phi_c_bijective}
  \SetKwProg{Function}{Function}{:}{}
  \SetKwFunction{IsConjugacyIrreducible}{IsConjugacyIrreducible}
  \Function{\IsConjugacyIrreducible{$G,H,\Phi$}}{
  \KwIn{irreducible graphs $G,H$ and a 1-block code $\Phi$}
  \KwOut{true, if $\Phi_\infty$ is a conjugacy; false, otherwise}
  \If{not \IsInjective{$G,H,\Phi$}}{\Return{false}\;}
  \For{$i\in\{1,\ldots,|V_G|\}$}{
    \If{$\tr(A(G)^i)\neq \tr(A(H)^i)$}{\Return{false}\;}
  }
  \Return{true}\;
}
  \caption{Determine if $\Phi_\infty$ between irreducible graphs is a conjugacy}
\end{algorithm}

\vspace{0mm}

\begin{algorithm}[H]\label{alg:add_sink_components}
  \SetKwProg{Function}{Function}{:}{}
  \SetKwFunction{AddSinkComponents}{AddSinkComponents}
  \Function{\AddSinkComponents{$G,H,\Phi$}}{
    \SetAlgoLined
    \SetKwFunction{AddSourceComponents}{AddSourceComponents}
    \SetKwFunction{GetSinkVertices}{GetSinkVertices}
    \SetKwFunction{GetSourceVertices}{GetSourceVertices}
    \SetKwFunction{GetSinkComponents}{GetSinkComponents}
    \SetKwFunction{GetShortestCycle}{GetShortestCycleStartingAt}
    \SetKwFunction{GetRandomVertex}{GetRandomVertex}
    \SetKwFunction{length}{Length}
    \SetKwFunction{contains}{Contains}
    \SetKwFunction{get}{Get}
    \SetKwFunction{add}{Add}
    \SetKw{Continue}{continue}
    \KwIn{reducible graphs $G,H$ and a 1-block code $\Phi$}
    \KwResult{(1) alters $G,H$ so each sink component $T$ in $H$ has the property $|V_{\Phi^{-1}(T)}|=1$, and (2) extends $\Phi$ to the new graphs so $\Phi_\infty:X_G\to X_H$ is a conjugacy if and only if the original 1-block code was a conjugacy}
    $\mathcal{T}\leftarrow$ \GetSinkComponents{H}\;
    \ForEach{subgraph $T$ in $\mathcal{T}$}
    {
      $T'\leftarrow \Phi^{-1}(T)$\;
      \If{$|V_{T'}|=1$}{\Continue\;}
      \tcc{Find the subgraphs $C$ and $C'$}
      $v\leftarrow\GetRandomVertex{T}$\;
      $c\leftarrow\GetShortestCycle{v}$\;
      $V_C\leftarrow\{u\in V_T: u\in c \}$\;
      $E_C\leftarrow\{(u,u'):uu'\text{ is a word of length 2 contained in }c^\infty \}$\;
      $V_{C'}\leftarrow \{u\in V_{T'}:\Phi(u)\in V_C\}$\;
      $E_{C'}\leftarrow\{(u,u')\in E_{T'}:(\Phi(u),\Phi(u'))\in E_C \}$\;
      \BlankLine
      \tcc{Attach the new vertices $t$ and $t'$}
      $V_G$.\add{$t'$}\;
      $N^+(t')\leftarrow \{t'\}$\;
      $N^-(t')\leftarrow \{t'\}$\;
      \ForEach{vertex $u$ in $V_{C'}$}
      {
        \If{$\Phi(u)=v\land\text{there is a path in $C'$ from $u$ to a cycle}$}
        {
          $N^-(t').\add{u}$\;
        }
      }
      $V_H.\add{t}$\;
      $N^+(t)\leftarrow \{t\}$\;
      $N^-(t)\leftarrow \{t, v\}$\;
      $\Phi(t')\leftarrow t$\;
    }
  }
  \caption{Turn every sink component into a single vertex}
\end{algorithm}

\vspace{0mm}

\begin{algorithm}[H]\label{alg:add_source_components_wrapper_function}
  \SetKwProg{Function}{Function}{:}{}
  \SetKwFunction{reverseEdges}{ReverseEdges()}
  \Function{\AddSourceComponents{$G,H,\Phi$}}{
    \SetAlgoLined	
    \KwIn{reducible graphs $G,H$ and a 1-block code $\Phi$}
    \KwResult{(1) alters $G,H$ so each source component $S$ in $H$ has the property $|V_{\Phi^{-1}(S)}|=1$, and (2) extends $\Phi$ to the new graphs so $\Phi_\infty:X_G\to X_H$ is a conjugacy if and only if the original 1-block code was a conjugacy}
    $G$.\reverseEdges\;
    $H$.\reverseEdges\;
    \AddSinkComponents{$G,H,\Phi$}\;
    $G$.\reverseEdges\;
    $H$.\reverseEdges\;
  }
  \caption{Turn every source component into a single vertex}
\end{algorithm}

\vspace{0mm}

\begin{algorithm}[H]\label{alg:is_conjugacy_reducible}
  \SetKwProg{Function}{Function}{:}{}
  \SetKwFunction{IsConjugacyReducible}{IsConjugacyReducible}
  \Function{\IsConjugacyReducible{$G,H,\Phi$}}{

  \KwIn{reducible graphs $G,H$ and a 1-block code $\Phi$}
  \KwOut{true, if $\Phi_\infty$ is a conjugacy; false, otherwise}
  \AddSinkComponents{$G,H,\Phi$}\;
  \AddSourceComponents{$G,H,\Phi$}\;
  $V_G.\add{$v_G$}$\;
  $N^-(v_G)\leftarrow \GetSinkVertices{G}$\;
  $N^+(v_G)\leftarrow \GetSourceVertices{G}$\;
  $V_H.\add{$v_H$}$\;
  $N^-(v_H)\leftarrow \GetSinkVertices{H}$\;
  $N^+(v_H)\leftarrow \GetSourceVertices{H}$\;
  $\Phi(v_G)\leftarrow v_H$\;
  \Return{\IsConjugacyIrreducible{$G,H,\Phi$}}\;
}
  \caption{Determine if $\Phi_\infty$ between reducible graphs is a conjugacy}
\end{algorithm}


\begin{thebibliography}{1}
  
  \bibitem{SAP}
  Rafael~M. Frongillo.
  \newblock Optimal state amalgamation is {NP}-hard.
  \newblock {\em Ergodic Theory and Dynamical Systems}, 39(7):1857--1869, 2019.
  
  \bibitem{hou1998leverrierfadeev}
  Shui-Hung Hou.
  \newblock Classroom note: {A} simple proof of the {Leverrier--Fadeev}
  characteristic polynomial algorithm.
  \newblock {\em SIAM Review}, 40(3):706--709, 1998.
  
  \bibitem{karp1972reducibility}
  Richard~M Karp.
  \newblock {\em Reducibility among combinatorial problems}.
  \newblock Springer, 1972.
  
  \bibitem{kobler2012graph}
  Johannes Kobler, Uwe Sch{\"o}ning, and Jacobo Tor{\'a}n.
  \newblock {\em The graph isomorphism problem: its structural complexity}.
  \newblock Springer Science \& Business Media, 2012.
  
  \bibitem{leverrier1840}
  Urbain Le~Verrier.
  \newblock Sur les variations s\'{e}culaires des \'{e}l\'{e}ments des orbites
  pour les sept plan\'{e}tes principales.
  \newblock 5:220--254, 1840.
  
  \bibitem{lind1999introduction}
  Douglas Lind and Brian Marcus.
  \newblock {\em An introduction to symbolic dynamics and coding}.
  \newblock Cambridge University Press, 1999.
  
  \bibitem{SYSLO198291}
  Maciej~M. Sysło.
  \newblock The subgraph isomorphism problem for outerplanar graphs.
  \newblock {\em Theoretical Computer Science}, 17(1):91 -- 97, 1982.
  
  \bibitem{tarjan1972depth}
  Robert Tarjan.
  \newblock Depth-first search and linear graph algorithms.
  \newblock {\em SIAM journal on computing}, 1(2):146--160, 1972.
  
  \bibitem{williams1973classification}
  R.~F. Williams.
  \newblock Classification of subshifts of finite type.
  \newblock {\em Annals of Mathematics}, 98(1):120--153, 1973.
  
\end{thebibliography}
\end{document}